\numberwithin{equation}{section}
\theoremstyle{plain}
\newtheorem{thm}{Theorem}[section]
\newtheorem{lem}[thm]{Lemma}
\newtheorem{cor}[thm]{Corollary}
\newtheorem{prp}[thm]{Proposition}
\theoremstyle{remark}
\newtheorem{rmk}[thm]{Remark}
\theoremstyle{definition}
\begin{document}
\newcommand{\Q}{\mathbb{Q}}
\newcommand{\R}{\mathbb{R}}
\newcommand{\I}{\mathbb{I}}
\newcommand{\Z}{\mathbb{Z}}
\newcommand{\N}{\mathbb{N}}
\newcommand{\F}{\mathcal{F}}
\newcommand{\p}{\mathbb{P}}
\newcommand{\M}{\mathcal{M}}        
\newcommand{\Pp}{\mathcal{P}}
\newcommand{\W}{\mathcal{W}}
\newcommand{\RR}{\mathcal{R}}
\newcommand{\E}{\mathbb{E}}
\newcommand{\h}{\mathcal{H}}
\newcommand{\Ii}{\mathrm{I}}
\newcommand{\eps}{\varepsilon}
\newcommand{\supp}{\mathrm{supp}}
\newcommand{\law}{\mathrm{Law}}
\newcommand{\leb}{\lambda}
\newcommand{\id}{\mathrm{id}}
\newcommand{\pr}{\mathrm{pr}}
\newcommand{\Ss}{\mathfrak{S}}
\newcommand{\inter}{\mathrm{int\,}}

\begin{frontmatter}

\title{Modified Massive  Arratia flow and\\ Wasserstein diffusion}
\runtitle{Massive Arratia flow}

\author{\fnms{Vitalii} \snm{Konarovskyi}\ead[label=e1]{konarovskiy@gmail.com}}
\and
\author{\fnms{Max-K.} \snm{von Renesse}\ead[label=e2]{renesse@math.uni-leipzig.de}}

\address{Max Planck Institut f\"{u}r Mathematik\\
in den Naturwissenschaften,\\
Inselstraße 22,\\
04103 Leipzig,\\
Germany\\
and\\
Department of Mathematics and\\
Informatics,\\
Yuriy Fedkovych Chernivtsi\\
National University,\\
Kotsubinsky Str. 2,\\
58012 Chernivtsi,\\
Ukraine\\
\printead{e1}}

\address{Universit\"{a}t Leipzig, \\
Fakult\"{a}t f\"{u}r Mathematik\\
und Informatik,\\
Augustusplatz 10,\\
04109 Leipzig,\\
Germany\\
\printead{e2}}

\affiliation{Max Planck Institut f\"{u}r Mathematik in den Naturwissenschaften and\\
Yuriy Fedkovych Chernivtsi National University,\\
Universit\"{a}t Leipzig}
\runauthor{V. Konarovskyi and M.-K. von Renesse}

\begin{abstract}
Extending previous work~\cite{Konarovskyi:2014:arx} by the first author we 
present  a variant of the Arratia flow, which consists of a collection of 
coalescing Brownian motions starting from every point of the unit interval. The 
important new feature of the model is that individual particles carry mass which 
aggregates upon coalescence and which scales the diffusivity of each particle in 
an inverse proportional way. In this work we relate the 
induced measure valued process to the Wasserstein diffusion 
of~\cite{Renesse:2009}. First, we present the process as  a martingale solution to a SPDE similar to~\cite{Renesse:2009}. Second, as our main result we show a 
Varadhan formula \cite{MR0208191} for short times  which is governed by 
the quadratic Wasserstein distance.
\end{abstract}

\begin{keyword}[class=MSC]
\kwd[Primary ]{60K35}
\kwd{60J60}
\kwd{60F10}
\kwd[; secondary ]{60H05}
\kwd{60G44}
\end{keyword}

\begin{keyword}
\kwd{Arratia flow}
\kwd{Wasserstein diffusion}
\kwd{large deviations}
\kwd{interacting particles system}
\kwd{coalescing}
\kwd{stochastic integral}
\kwd{optimal transport}
\end{keyword}

\end{frontmatter}

\section{Introduction and statement of main results} \subsection{Motivation}
Since its introduction in~\cite{Felix:2001} Otto's  formal infinite dimensional 
Riemannian calculus for optimal transportation has been the inspiration for 
numerous new results both in pure and applied
mathematics, see e.g.\ \cite{MR2842966,MR2480619,MR1760620,MR2237206,MR2142879}.
It can be considered a lift of conventional calculus of points to point ensembles resp.\ spatially continuous mass distributions.
It is therefore natural to ask whether this lifting procedure from points to
mass configurations has a probabilistic counterpart.
The fundamental object of such a theory  would
need to be an analogue of Brownian motion on the space of probability
measures  adapted to Otto's Riemannian structure of optimal
transportation. In~\cite{Renesse:2009} the second author together with
Sturm proposed a first
candidate
of such a measure valued Brownian motion (with drift), calling it
\textit{Wasserstein Diffusion}, and  showed among other things that its short
time asymptotics are
indeed governed by the geometry of optimal transport in the sense of a 
Varadhan
formula for short times governed by the  Wasserstein distance.
-- However, the
construction in~\cite{Renesse:2009} has several
limitations
since
it is strictly restricted to diffusing measures on the real line,  it
brings about additional seemingly  non-physical correction/renormalization
terms and lastly it is obtained by abstract Dirichlet form methods which e.g.\
do not allow for generic starting points of the evolution. Hence, in spite of
several
ad-hoc finite dimensional approximations~\cite{Renesse:2010,Stannat:2013,Sturm:2014} the process remained a
rather obscure object. Given the strong similarity of the SPDE representation 
of the Wasserstein Diffusion to the Dean-Kawasaki equation in physics
\cite{Dean:1996,Kawasaki199435} it is natural to ask for related measure  
valued diffusion processes which share  a similar multiplicative 
noise structure, giving rise to the same large deviation principles on short 
time scales. 

\smallskip

\subsection{Modified Massive Arratia Flow}
In this paper we give a different and very explicit construction of
another  diffusion process in the space of probability measures on the real line
which exhibits a similarity to the Wasserstein diffusion as discussed above. The
construction
is
based on a modification of the so-called Arratia flow of coalescing Brownian 
motions, which was introduced in~\cite{Arratia:1979} and which was later 
extensively studied by Dorogovtsev and 
coauthors~\cite{Dorogovtsev:2004,Dorogovtsev:2007:en,Dorogovtsev:2014,Ostapenko:2010,
Malovichko:2009} resp.\ Le Jan-Raimond~\cite{Le_Jan:2004}. As 
an important extension  the Brownian Web~\cite{MR2094432} has 
also received significant attention in recent studies.  

Our point of departure is another  
modification of the Arratia flow in~\cite{Konarovskiy:2010:TVP:en} by 
assigning a mass to each particle, which is aggregated when particles
coalesce and which controls the diffusivity of each particle 
in inverse
proportional way. In~\cite{Konarovskyi:2014:arx} it was shown 
for the first time that such a system can be
constructed starting with an infinitesimal mass particle at each point of the 
unit interval, i.e.\ such that the 
empirical measure of the particles almost surely converges in weak topology to  
the  uniform measure on the unit interval as time tends to zero. -- The 
resulting model, which we shall call \textit{modified
massive Arratia flow (MMAF)}, can best be described  in terms of a 
family of continuous martingales that describe the motion of the 
particles. Letting $D([0,1], C[0,T])$ denote
 the Skorokhod space of c\`{a}dl\'{a}g-functions from $[0,1]$ into the 
metric space $(C[0,T], d_\infty)$ of
continuous real valued trajectories over the time interval $[0,T]$  
 with the uniform distance $d_\infty$ and $\leb$ denote Lebesgue measure on 
$[0,1]$, the main
result of~\cite{Konarovskyi:2014:arx} reads as follows.
\begin{thm}\label{theorem_main_result}
There is a process $y \in D([0,1], C[0,T])$  such that
\begin{enumerate}
\item[(C1)] for all $u\in[0,1]$ the process $y(u,\cdot)$ is a continuous
square
integrable
martingale with respect to the filtration
\[\F_t=\sigma(y(u,s),\ u\in[0,1],\ s\leq t),\quad t\in[0,T]; \]
\item[(C2)] for all $u\in[0,1]$, $y(u,0)=u$;

\item[(C3)] for all $u<v$ from $[0,1]$ and $t\in[0,T]$, $y(u,t)\leq y(v,t)$;

\item[(C4)] for all $u,v\in[0,1]$ the joint quadratic variation of $y(u,\cdot)$ and $y(v,\cdot)$ is
$$
\left[y(u,\cdot),y(v,\cdot)\right]_t=\int_0^t\frac{\I_{\{\tau_{u,v}\leq
s\}}ds}{m(u,s)},
$$
where $m(u,t)=\leb\{v:\ \exists s\leq t\ \ y(v,s)=y(u,s)\}$, $\tau_{u,v}=\inf\{t:\
y(u,t)=y(v,t)\}\wedge T$.
\end{enumerate}
\end{thm}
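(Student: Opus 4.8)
\textit{Proof strategy.} The plan is to realise $y$ as a scaling limit of finite systems of coalescing ``heavy'' diffusions. For $n\in\N$, start $n+1$ particles from the grid $k/n$, $k=0,\dots,n$, each of mass $\tfrac1{n+1}$, and let them evolve so that, while the clusters are pairwise distinct, the cluster $C\ni u$ moves as $\mathrm{d}y^n(u,t)=m^n(u,t)^{-1}\,\mathrm{d}w_C(t)$ with $w_C$ a Brownian motion attached to $C$, independent across clusters, and $m^n(u,t)\in\{\tfrac1{n+1},\dots,1\}$ the total mass of $C$; whenever two clusters meet they are merged from that instant, their masses added and their noises replaced by a single fresh Brownian motion. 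Built inductively along the a.s.\ finite, strictly increasing sequence of coalescence times, $y^n\in D([0,1],C[0,T])$ is well defined, and since coalescing one--dimensional martingales cannot cross, $u\mapsto y^n(u,t)$ is non-decreasing; by construction $y^n$ satisfies the $n$-particle analogue of (C1)--(C4), in particular $[y^n(u,\cdot),y^n(v,\cdot)]_t=\int_0^t\I_{\{\tau^n_{u,v}\le s\}}m^n(u,s)^{-1}\,\mathrm{d}s$ with $\tau^n_{u,v}$, $m^n$ the discrete meeting times and masses.

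Next I would establish uniform-in-$n$ a priori bounds. Because $u\mapsto y^n(u,t)$ is monotone, consecutive grid indices $k,k+1$ lie in the same cluster iff $\tau^n_{k/n,(k+1)/n}\le t$, so $N^n(t):=\#\{\text{clusters at time }t\}=1+\#\{k:\tau^n_{k/n,(k+1)/n}>t\}$; before they coalesce the clusters of $k$ and $k+1$ are disjoint, hence of masses $m_1,m_2$ with $m_1+m_2\le1$ and thus $m_1^{-1}+m_2^{-1}\ge4$, so the gap $y^n((k+1)/n,\cdot)-y^n(k/n,\cdot)$ is a nonnegative martingale from $\tfrac1n$ whose quadratic variation grows at rate $\ge4$ before being absorbed at $0$, and a Dambis--Dubins--Schwarz time change bounds $\p(\tau^n_{k/n,(k+1)/n}>t)\le Cn^{-1}t^{-1/2}$; summing gives $\E N^n(t)\le1+Ct^{-1/2}$. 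Since $m^n(u,s)=\leb(C)$ for $u\in C$ while $\leb\{u\in C\}=\leb(C)$, one has $\int_0^1 m^n(u,s)^{-1}\,\mathrm{d}u=N^n(s)$, whence, $y^n(u,\cdot)$ being a martingale,
\[
\E\int_0^1\bigl(y^n(u,t)-y^n(u,s)\bigr)^2\,\mathrm{d}u=\E\int_s^t N^n(r)\,\mathrm{d}r\le C\sqrt{t-s}.
\]
Moreover, for fixed $h\in L_2[0,1]$ the linear functional $\int_0^1 y^n(u,\cdot)h(u)\,\mathrm{d}u$ is a martingale whose quadratic variation rate equals $\sum_C\leb(C)^{-1}\bigl(\int_C h\bigr)^2\le\|h\|_{L_2}^2$ by Cauchy--Schwarz --- the masses cancel --- so these functionals are tight in $C[0,T]$ uniformly in $n$. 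Combined with a uniform sup-bound, which requires the refined estimate $\E\int_0^T m^n(u,s)^{-1}\,\mathrm{d}s\le C$ obtained by exploiting that a cluster of small mass has large diffusivity and hence coalesces with its neighbours quickly, and with the compactness of bounded monotone functions in $L_2[0,1]$, this yields tightness of $\{\law(y^n)\}$ in $C([0,T],L_2[0,1])$; using monotonicity in $u$ once more, the limit points are realised in $D([0,1],C[0,T])$. Fix such a subsequential limit $y$ with an a.s.\ converging coupling.

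Finally I would verify (C1)--(C4) for $y$. Conditions (C2) and (C3) pass to the limit at once; for (C1) the uniform $L_2$ bounds give uniform integrability of $\{y^n(u,t)\}_n$, so by the standard argument --- testing the martingale property of $y^n(u,\cdot)$ against bounded continuous functionals of finitely many coordinates up to time $s$ and letting $n\to\infty$ --- each $y(u,\cdot)$ is a continuous square integrable $\F_t$-martingale. The substantive point is (C4): one passes to the limit in the martingale $y^n(u,t)y^n(v,t)-\int_0^t\I_{\{\tau^n_{u,v}\le s\}}m^n(u,s)^{-1}\,\mathrm{d}s$ to conclude that $y(u,t)y(v,t)-\int_0^t\I_{\{\tau_{u,v}\le s\}}m(u,s)^{-1}\,\mathrm{d}s$ is an $\F_t$-martingale, which with (C1) gives the covariation in (C4). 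For this one needs the \emph{stability of the coalescing structure under the approximation}: $\tau^n_{u,v}\to\tau_{u,v}$ and $m^n(u,s)\to m(u,s)$ for a.e.\ $(u,v)$, $s$, the fact that no mass is created or lost and no cluster spontaneously splits in the limit --- so that $y(u,\cdot)\equiv y(v,\cdot)$ on $[\tau_{u,v},T]$ and $m(u,s)=\leb\{v:\tau_{u,v}\le s\}$ as in (C4) --- and an a priori lower bound $m^n(u,s)\gtrsim\sqrt s$ (again from the coalescing dynamics) taming the singularity of the integrand near $s=0$ and ensuring $\int_0^t m(u,s)^{-1}\,\mathrm{d}s<\infty$. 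I expect the two main difficulties to be: this passage of the coalescence/mass structure to the limit in a sense strong enough to take limits inside the time integral; and the refined a priori bound $\E\int_0^T m^n(u,s)^{-1}\,\mathrm{d}s\le C$ underlying the compact containment. Both hinge on the interplay between the inverse-proportional diffusivity and the aggregation of mass, which is the genuinely new feature of the model.
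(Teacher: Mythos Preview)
Your overall plan --- finite coalescing systems, tightness, identification of the limit via the martingale problem --- matches the paper, and several of your a~priori estimates are exactly the ones used there (e.g.\ $\E N^n(t)\le C t^{-1/2}$ and the bound $\frac{d}{dt}[\int h\,y^n(\cdot,t)\,du]\le\|h\|_{L_2}^2$).  The main divergence is the tightness space: you argue tightness in $C([0,T],L_2[0,1])$ and then assert that ``using monotonicity in $u$ once more, the limit points are realised in $D([0,1],C[0,T])$''.  This step is not innocent.  Conditions (C1)--(C4) are \emph{pointwise-in-$u$} statements, and $L_2$-in-$u$ convergence at each time does not by itself give $y^n(u,\cdot)\to y(u,\cdot)$ in $C[0,T]$ for fixed $u$, nor even a version of the limit with $u\mapsto y(u,\cdot)$ c\`adl\`ag into $C[0,T]$.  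The paper avoids this by proving tightness \emph{directly} in $D([0,1],C[0,T])$, and the key device you are missing is a three-point supermartingale estimate: the product $(y^n(u+h,\cdot\wedge\sigma^+)-y^n(u,\cdot\wedge\sigma^+))(y^n(u,\cdot\wedge\sigma^-)-y^n(u-h,\cdot\wedge\sigma^-))$ is a supermartingale, which yields $\p\{\|y^n(u+h,\cdot)-y^n(u,\cdot)\|_{C}>\lambda,\ \|y^n(u,\cdot)-y^n(u-h,\cdot)\|_{C}>\lambda\}\le 4h^2/\lambda^2$, and this (plus a boundary estimate and the Aldous criterion for each fixed $u$) gives Skorokhod tightness.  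A further lemma then shows the limit has no fixed discontinuities in $u$, so $y^n(u,\cdot)\to y(u,\cdot)$ in $C[0,T]$ for every $u$, which is what feeds into the martingale identification.

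For (C4) you correctly identify the stability of $\tau^n_{u,v}$ and $m^n(u,\cdot)$ as the crux.  The paper handles $\tau^n_{u,v}\to\tau_{u,v}$ by a general lemma: if continuous local martingales $z_n$ with $[z_n]_t-[z_n]_s\ge p(t-s)$ before hitting $0$ converge to $z$, then their hitting times of $0$ converge in probability; applied to the gap process $z_n=y^n(v,\cdot)-y^n(u,\cdot)$ (whose quadratic variation rate is $\ge$ some positive constant before coalescence, by your own observation that $m_1^{-1}+m_2^{-1}\ge4$) this gives the needed convergence, and $m^n(u,s)\to m(u,s)$ then follows from the integral representation $m(u,s)=\int_0^1\I_{\{\tau_{u,v}\le s\}}\,dv$.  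One correction: your hoped-for pointwise bound $m^n(u,s)\gtrsim\sqrt s$ is false; what is true (and what the paper uses for both tightness and the integrability in (C4)) is the moment bound $\E\, m^n(u,s)^{-\alpha}\le C s^{-1/2}$ for $\alpha<3/2$, hence $\E\int_0^T m^n(u,s)^{-1}\,ds\le C\sqrt T$.
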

Note that uniqueness in law of $y$ satisfying properties $(C1)-(C4)$ remains an 
important open problem. However, all subsequent results derived in 
this paper deal just with \textit{some} field of martingales 
satisfying properties $(C1)-(C4)$ above. In particular, uniqueness is not needed for any of our 
arguments.

We also point out that, in contrast to the classical Arratia flow,  the family 
of maps $\{y(\cdot,s)\}_{s\geq0}$ does not induce a (stochastic)  flow on the 
real  line, i.e.\ does not satisfy a cocycle property. Our terminology of a 
'modified massive Arratia flow' refers rather to the corresponding measure 
valued process\footnote{In fact $\mu_t$, $t\in[0,T]$, turns out to be a Markov process, but we will not stress this here.} 
$$
\mu_t:=y(\cdot,t)_\#\leb,\quad  t \in [0,T],
$$ 
which is obtained via the image (push forward)  of the uniform measure 
$\lambda$ on $[0,1]$ under the random maps $y(\cdot, t)$. The process $\mu_t$, $t\in[0,T]$, is the central object of our interest. In particular, 
$\mu_0 =\lambda$ in the present case, but our  arguments and constructions below 
can be modified to the case of more general starting measure, cf.~\cite{Konarovskyi:2017:EJP}.
For the sake of presentation, in the sequel we stick to the  $\mu_0=\lambda$ case.

For illustration and comparison to the standard Arratia flow we include 
here some numerical simulations. The red trajectory on the picture is the evolution of the center of mass of the particles which is a Brownian motion.

\begin{center}
\includegraphics[width=10cm]{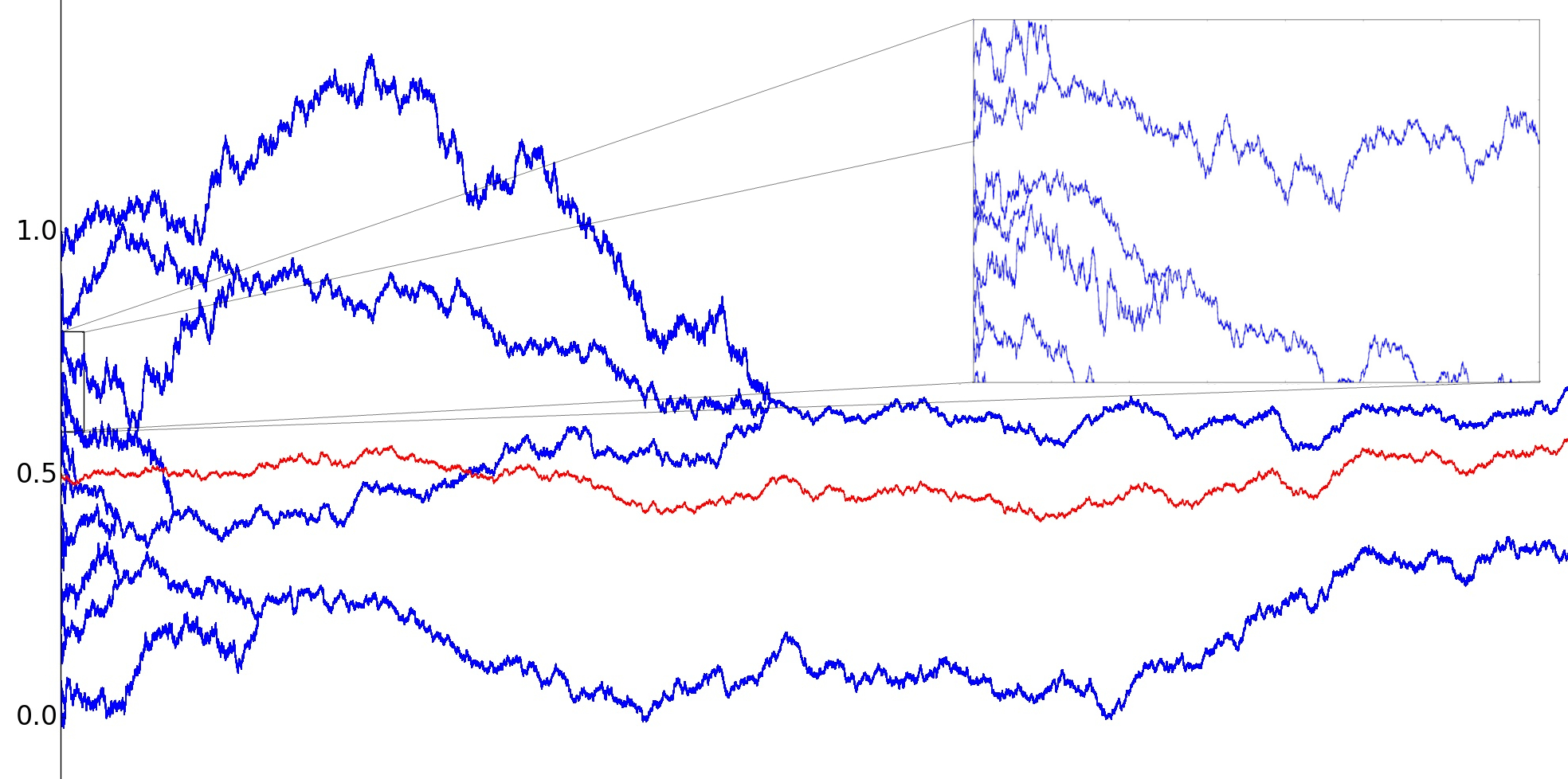}
\end{center}

\subsection{Main results for the Modfied Massive Arratia Flow}
\subsubsection{New construction and stochastic calculus for the MMAF}
The first result of this paper is a new simplified construction of a 
modified massive Arratia flow, using spatial discretization and a tightness 
argument. Second, we analyze the process $y(t)=y(\cdot,t)$, $t\in[0,T]$,
as an $L_2(\lambda)$-valued martingale and develop an associated stochastic calculus. We show  that for each $g \in L_2(\lambda)$, $s\mapsto
J(g)(s):= (g,y(s))_{L_2(\lambda)}$, where $(\cdot,\cdot)_{L_2(\lambda)}$ denotes the inner product in $L_2(\lambda)$, is a continuous square integrable martingale with quadratic variation process
$$
\left[J(g)\right]_t=\int_0^t\|\pr_{y(s)}g\|_{L_2(\lambda)}^2ds,\quad t\in[0,T].
$$
Here $\pr_{h} g$ is the orthogonal projection in $L_2(\lambda)$ of $g$ onto the subspace of $\sigma(h)$-measurable functions. This shows that the process $y(\cdot)$ is a martingale solution to the  infinite dimensional SDE
$$
dy(s) = \pr_{y(s)}  dW_s,
$$
where $W$ is cylindrical Brownian motion in the Hilbert space $L_2(\lambda)$.

By $(C3)$  the map $[0,1]\ni u \to y(u,t)$ is monotone (and c\`{a}dl\'{a}g), hence the
one-to-one map between  probability measures on
$\R$ and their quantile functions on $[0,1]$ yields an equivalent
parametrization of $y$ by the induced measure valued flow
\begin{equation}\label{f_def_mu_t}
\mu_t:=y(\cdot,t)_\#\leb,\quad  t \in [0,T],
\end{equation}
where $y(\cdot,t)_\#\leb(A)=\leb\{u:\ y(u,t)\in A\}$, $A \in 
\mathcal B (\R)$, denotes the image measure of $\lambda $ under the 
map $y(\cdot, t)$.\\

The process $\mu_t$, $t\in[0,T]$, and its relation to the
Wasserstein diffusion, is our main interest of this paper. Our first observation follows from the Ito formula for $y(t)$, $t\in[0,T]$, obtained 
in~\cite{Konarovskyi:2014:arx}.

\begin{prp}\label{prop_mart_prop} Let $\mu_t := y(\cdot,t)_\#\leb$. Then,
for each twice continuously differentiable function $f$ on $\R$ with bounded derivatives up to the second order
\[ M^f_t := \langle f ,
\mu_t\rangle - \int_0^t \langle f, \Gamma (\mu_s)
\rangle \,ds \]
is a continuous local martingale with quadratic variation process
\[ \left[ M^f\right]_t = \int_0^t \langle (f')^2,\mu_s\rangle ds,  \]
 where $\Gamma$ is defined as follows
\[  \langle f,  \Gamma (\nu)
\rangle = \frac{1}{2}\sum_{x \in \supp(\nu) } f''(x). \]
\end{prp}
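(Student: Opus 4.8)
The plan is to derive the martingale problem for $\mu_t$ directly from the pathwise properties (C1)--(C4) of the flow $y$, or equivalently from the $L_2(\lambda)$-Itô formula announced in the excerpt. First I would write $\langle f, \mu_t\rangle = \int_0^1 f(y(u,t))\,du$ and apply Itô's formula to $f(y(u,\cdot))$ for each fixed $u\in[0,1]$, using (C1) (each $y(u,\cdot)$ is a continuous square-integrable martingale) and (C4) (its quadratic variation is $\int_0^t \frac{ds}{m(u,s)}$). This gives
\[
f(y(u,t)) = f(u) + \int_0^t f'(y(u,s))\,dy(u,s) + \frac12\int_0^t f''(y(u,s))\,\frac{ds}{m(u,s)}.
\]
Integrating in $u$ over $[0,1]$ and exchanging the order of integration (justified by boundedness of $f',f''$ and square-integrability of the martingales, via a stochastic Fubini argument) yields $\langle f,\mu_t\rangle - \langle f,\mu_0\rangle$ as a stochastic integral term plus $\frac12\int_0^t\!\int_0^1 f''(y(u,s))\,\frac{du}{m(u,s)}\,ds$. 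The martingale part $M^f_t := \int_0^t\!\int_0^1 f'(y(u,s))\,dy(u,s)\,du$ is a continuous local martingale, and its quadratic variation is computed from (C4): formally
\[
\left[M^f\right]_t = \int_0^t\!\!\int_0^1\!\!\int_0^1 f'(y(u,s))f'(y(v,s))\,\frac{\I_{\{\tau_{u,v}\le s\}}}{m(u,s)}\,du\,dv\,ds.
\]

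The key simplification is that for fixed $s$, the particle system has coalesced into finitely (or countably) many clusters: $[0,1]$ splits into intervals of constant $y(\cdot,s)$-value, each of Lebesgue measure equal to the mass $m(u,s)$ of the corresponding point $x\in\supp(\mu_s)$. On such a cluster $\{v: y(v,s)=x\}$ one has $\tau_{u,v}\le s$ and $m(v,s)$ equal to the cluster size, so the triple integral collapses: the $v$-integral over a cluster of mass $m$ contributes $f'(x)\cdot m$, the $1/m(u,s)$ cancels the remaining mass factor from the $u$-integral, and summing over clusters gives $\left[M^f\right]_t = \int_0^t\sum_{x\in\supp(\mu_s)}(f'(x))^2\,m(x,s)\,ds = \int_0^t\langle (f')^2,\mu_s\rangle\,ds$. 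The same clustering argument applied to the drift term gives $\frac12\int_0^t\sum_{x\in\supp(\mu_s)} f''(x)\cdot m(x,s)\cdot\frac1{m(x,s)}\,ds = \frac12\int_0^t\sum_{x\in\supp(\mu_s)}f''(x)\,ds = \frac12\int_0^t\langle f,\Gamma(\mu_s)\rangle\,ds$, matching the claimed generator up to the factor $\frac12$ absorbed into $\Gamma$ as in the statement. Combining, $M^f_t = \langle f,\mu_t\rangle - \int_0^t\langle f,\Gamma(\mu_s)\rangle\,ds$ is a continuous local martingale with the asserted bracket.

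The main obstacle is making the interchange of $du$-integration and stochastic integration rigorous, i.e. justifying the stochastic Fubini theorem in this setting: the integrand $f'(y(u,s))$ must be shown to be jointly measurable and adapted as a process indexed by $(u,s)$, and one needs an $L_2$-bound of the form $\int_0^1 \E\int_0^T (f'(y(u,s)))^2\,\frac{ds}{m(u,s)}\,du < \infty$ — but $m(u,s)$ can be arbitrarily small for small $s$, so this requires care (one can localize, or use that $\int_0^1\frac{du}{m(u,s)} = \#\{\text{clusters at time }s\}$ which may blow up as $s\downarrow 0$). The cleanest route is to invoke the $L_2(\lambda)$-valued Itô formula from~\cite{Konarovskyi:2014:arx} directly: apply it to the function $g\mapsto \int_0^1 f(g(u))\,du$ on $L_2(\lambda)$, whose first and second Gateaux derivatives are $f'(g(\cdot))$ and the diagonal operator $f''(g(\cdot))$, and read off the drift and quadratic variation from the projection structure $\pr_{y(s)}$ — the projection onto $\sigma(y(s))$-measurable functions is precisely the conditional expectation that averages over each coalescence cluster, which reproduces the sum over $\supp(\mu_s)$ automatically. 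A secondary technical point is that $M^f$ is a priori only a \emph{local} martingale (hence the statement says "local"); no further integrability is claimed, so no uniform-integrability argument is needed.
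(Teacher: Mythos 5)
Your approach is essentially the same as the paper's: the paper states that the result "follows immediately" from the It\^o formula for $y_\cdot$ in~\cite{Konarovskyi:2014:arx}, and you reconstruct exactly that argument by writing $\langle f,\mu_t\rangle=\int_0^1 f(y(u,t))\,du$, applying It\^o to each $f(y(u,\cdot))$ using $(C1)$, $(C4)$, integrating over $u$, and collapsing the triple integral for the bracket over coalescence clusters. Your identification of the stochastic Fubini interchange as the only technical point requiring care, and the suggestion to bypass it with the $L_2(\lambda)$-valued It\^o formula (or by localization), is also the right way to make the sketch rigorous. The quadratic-variation computation is correct and matches the statement.

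One point you should state more plainly rather than paper over: your derivation of the drift term, which is correct, produces
\[
\tfrac12\int_0^t\int_0^1 \frac{f''(y(u,s))}{m(u,s)}\,du\,ds
  \;=\;\tfrac12\int_0^t\sum_{x\in\mathop{supp}(\mu_s)}f''(x)\,ds
  \;=\;\tfrac12\int_0^t\langle f,\Gamma(\mu_s)\rangle\,ds,
\]
whereas the proposition asserts $M^f_t=\langle f,\mu_t\rangle-\int_0^t\langle f,\Gamma(\mu_s)\rangle\,ds$ \emph{without} the factor $\tfrac12$, while simultaneously defining $\langle f,\Gamma(\nu)\rangle=\sum_{x\in\mathop{supp}(\nu)}f''(x)$ also without a $\tfrac12$. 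Your phrase ``the factor $\frac12$ absorbed into $\Gamma$ as in the statement'' is not accurate --- the statement does not absorb it --- and reads as if you forced agreement rather than noticing a discrepancy. What your (correct) computation actually shows is that the proposition as printed is missing a factor of $\tfrac12$, either in front of the time integral or in the definition of $\Gamma$; you should say so explicitly rather than hand-wave the normalization. Apart from this presentational issue, the proof is sound and in the spirit of the paper.
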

We point out that  $\Gamma(\mu_t)$ is well defined since  
property $(P4)$  of section~\ref{subsection_properties_of_y} below implies, 
that $\supp(\mu_t)$ is a finite set for all $t\in(0,T]$ almost surely.

As a consequence of Proposition~\ref{prop_mart_prop}, $\mu_t$, $t\in[0,T]$, is a 
probability valued  martingale solution to
the SPDE
\begin{equation}
\label{modaflowspde} d\mu_t =  \Gamma(\mu_t)dt + \mathop{{\rm div}}( 
\sqrt {\mu_t} dW_t),
\end{equation}
which follows from a standard application of Ito's formula in finite 
dimensions. 

The SPDE~\eqref{modaflowspde} should be compared to the corresponding
SPDE for the Wasserstein
diffusion~\cite{Renesse:2010,Renesse:2009}\footnote{see 
also~\cite{MR2842966} for the connection to the Dean-Kawasaki equation 
(c.f.~\cite{Dean:1996}).} reading 
\[ d\mu_t =  \beta \Delta \mu_t dt +  \hat \Gamma(\mu_t)dt + \mathop{{\rm
div}}( \sqrt {\mu_t} dW_t),\]
with
\[ \langle f, \hat \Gamma(\nu)\rangle =
\sum\limits _{
I\in  \mathrm{gaps}(\nu)
} \left[ \frac{f''(I_+)+f''(I-)}2-\frac{f'(I_+)-f'(I_+)}{|I|}
\right] 
.\]
Thus, besides the apparent similarity of the second order part in the drift
operators $\Gamma$ and $\hat \Gamma$, both models share the same
singular multiplicative noise
which gives rise to the characteristic density $\langle (f')^2,
\mu\rangle$ in the quadratic variation process. Of course, this is 
the same
expression as the one appearing in Otto's definition \cite{Felix:2001}  of 
the Riemannian energy of an infinitesimal (tangential)   
perturbation  of a measure resp.\ in the 
Benamou-Brenier formula~\cite{MR1738163} for optimal
transportation. 

\subsubsection{Varadhan Formula for the short time asymptotics of the MMAF}

As the main achievement of the present paper we will make this 
connection more rigorous by showing that the small time fluctuations of the 
process are in fact governed, on an exponential scale, by the Wasserstein 
metric.  \\

To this aim, recall that the (quadratic) Wasserstein metric is defined as 
follows. For probability measures $\nu_1,\ \nu_2$ on the real line with finite 
second moments it is defined by
$$
d_{\mathcal W}(\nu_1,\nu_2)=\left(\inf_{\nu\in\chi(\nu_1,\nu_2)}\iint_{\R^2}|\xi-\eta|^2\nu(d\xi,d\eta)\right)^{\frac{1}{2}},
$$
where $\chi(\nu_1,\nu_2)$ denotes the set of all probability measures on $\R^2$ 
with marginals $\nu_1,\nu_2$. The main result of the present paper is the 
following version of the Varadhan formula for the measure valued diffusion 
$\mu_t$, $t\in[0,T]$. -- The precise conditions for a set $A \subset \Pp(\R)$ 
to be properly chosen for the statement are specified in section 
\ref{sububsection_prop_chosen} below.

\begin{thm}\label{thm_varadhan_formula}
Let $y$ satisfy $(C1)-(C4)$ and let $\mu_t$, $t\in[0,T]$, be defined 
by~\eqref{f_def_mu_t}. Then, for properly chosen sets $A \subset \Pp(\R)$
\begin{equation}\label{f_Varadhan_formula}
 \lim_{\eps \to 0} \eps \ln  \p\{\mu_{\eps}  \in A\} =
- \frac {\left(d_{\mathcal W}\left(\leb,A\right)\right)^2}{2},
\end{equation}
where the uniform distribution $\lambda$ on 
$[0,1]$ is considered as an element of $\Pp(\R)$.
\end{thm}

It should be noted that in Theorem~\ref{thm_varadhan_formula} we do not 
make any assumptions on the system $y$ other than $(C1)-(C4)$. Here, the particular construction leading to a system with these properties does not play any role. 

Theorem \ref{thm_varadhan_formula} is a large deviations
statement for the family of random measures $\mu_{\eps}$, involving the rate 
function 
\begin{equation*}
I(\eta) = \frac 1 2 \left(\inf_{\nu \in \chi(\lambda, \eta)} 
\iint_{\R^2} |\xi-\eta|^2 \nu(d\xi,d\eta) \right)^2= \frac 1 2 d_{\mathcal W}^2(\lambda, 
\eta).
\end{equation*}

We obtain it by contraction from a full large deviation principle for the 
family of 
processes $\{y^{\eps}(\cdot)\}_{\eps\in(0,1]}=\{y(\eps\cdot)\}_{\eps\in(0,1]}$. 
The latter is the main technical achievement of the present paper and it 
consumes the biggest part of it. 

\subsubsection{Large Deviation Principle for the MMAF}
For a precise  
 statement of the large deviation principle for the sequence 
$\{y^{\eps}(\cdot)\}_{\eps\in(0,1]}=\{y(\eps\cdot)\}_{\eps\in(0,1]}$
we need some notation. Let
$L_2(\rho)=L_2([0,1],\rho)$, $\rho(du)= \kappa(u)du$,  where
$\kappa:[0,1]\to[0,1]$
\begin{equation}\label{f_kappa}
\kappa(u)=\begin{cases}
              u^{\beta},\quad u\in[0,1/2],\\
              (1-u)^{\beta},\quad u\in(1/2,1],
          \end{cases}
\end{equation}
for some fixed $\beta>1$, and
$$
D^{\uparrow}=\{h\in D([0,1],\R):\ h\ \mbox{is non-decreasing}\}.
$$

Denote 
\begin{align}\label{f_set_H}
\begin{split}
\h=\{\varphi & \in C([0,T],L_2(\lambda)\cap D^{\uparrow}):\
\varphi(0)=\id\ \mbox{and}\\
&t\to \varphi(t) \in L_2(\lambda)\ \mbox{is absolutely continuous\footnotemark}\},
\end{split}
\end{align}
\footnotetext{A function $f(t)$, $t\in[0,T]$, taking values in a Hilbert space $H$ is called {\it
absolutely continuous} if there exists an integrable function $t\to h(t)\in H$ (in Bochner sense)
such that
$$
f(t)=f(0)+\int_0^th(s)ds,
$$
and we will denote the function $h$ by $\dot{f}$.}
$$
\Ii(\varphi)=\begin{cases}
              \frac{1}{2}\int_0^T\|\dot{\varphi}(t)\|^2_{L_2(\lambda)}dt,\quad
\varphi\in\h,\\
              +\infty,\quad \mbox{otherwise}.
           \end{cases}
$$

\begin{thm}\label{theorem_LDP}
The family of processes $\{y^{\eps}\}_{\eps\in(0,1]}$ satisfies a large
deviations principle in
the space
$C([0,T],L_2(\rho))$ with the good rate function $\Ii$, i.e. for any open set $G$ in $C([0,T],L_2(\rho))$
$$
\varliminf\limits_{\varepsilon\to 0}\varepsilon\ln\mathbb{P}\{y^{\eps}\in G\}\geq -\inf\limits_G \Ii
$$
and for any closed set $F$
$$
\varlimsup\limits_{\varepsilon\to 0}\varepsilon\ln\mathbb{P}\{y^{\eps}\in F\}\leq -\inf\limits_F \Ii.
$$
\end{thm}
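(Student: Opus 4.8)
The plan is to establish the large deviation principle by exploiting the martingale structure $(C1)$--$(C4)$ directly, without relying on any particular construction of $y$. First I would reduce to a statement about the $L_2(\lambda)$-valued martingale $y(\cdot)$: by the Itô calculus recalled in the introduction, for each $g\in L_2(\lambda)$ the process $J(g)(t)=(g,y(t))_{L_2(\lambda)}$ is a continuous square-integrable martingale with $[J(g)]_t=\int_0^t\|\pr_{y(s)}g\|_{L_2(\lambda)}^2\,ds\le t\|g\|_{L_2(\lambda)}^2$, since the projection is a contraction. Rescaling time, $y^\eps(t)=y(\eps t)$ is again such a martingale system with quadratic variations scaled by $\eps$; equivalently $\sqrt{\eps}\,\widetilde W$ plays the role of the driving noise, so we are in the classical small-noise regime. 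The candidate rate function $\Ii$ is exactly the one dictated by the controlled equation $\dot\varphi(t)=\pr_{\varphi(t)}\dot w(t)$ with $\frac12\int_0^T\|\dot w\|^2\,dt$ minimized subject to $\varphi$ being the output: since $\pr_{\varphi(s)}$ is an orthogonal projection, the cheapest control producing a given monotone absolutely continuous path is $\dot w=\dot\varphi$ itself (any component orthogonal to the range of $\pr_{\varphi(s)}$ is wasted), which is why $\Ii(\varphi)=\frac12\int_0^T\|\dot\varphi(t)\|_{L_2(\lambda)}^2\,dt$ on $\h$ and $+\infty$ otherwise; the constraint $\varphi(t)\in D^\uparrow$ with $\varphi(0)=\id$ is inherited from $(C2)$--$(C3)$.

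The proof itself I would organize in the standard three steps. \emph{Exponential tightness:} using the bound $[J(g)]_t\le t\|g\|_{L_2(\lambda)}^2$ together with the exponential martingale inequality (Bernstein-type bound for continuous martingales), one controls $\p\{\sup_{t\le T}|J(g)(y^\eps(t))-J(g)(\id)|\ge \delta\}$ on the exponential scale, and a modulus-of-continuity estimate in $t$ (again from the quadratic variation bound) plus compactness of bounded monotone families in $L_2(\mu)$ — here the weight $\kappa$ with $\beta>1$ is used so that the embedding $D^\uparrow\cap\{\text{bounded}\}\hookrightarrow L_2(\mu)$ is compact and tight tails are automatic near the endpoints of $[0,1]$ — yields exponential tightness in $C([0,T],L_2(\mu))$. \emph{Lower bound:} for $\varphi\in\h$ with $\Ii(\varphi)<\infty$, perform a Girsanov/Cameron--Martin change of measure shifting the driving noise by $\dot\varphi$, under which $y^\eps$ concentrates near $\varphi$; the Radon--Nikodym density produces precisely $\exp(-\Ii(\varphi)/\eps)$ to leading exponential order, and one must check the shifted equation still satisfies $(C1)$--$(C4)$ (monotonicity is preserved because the shift is monotone and coalescence is unaffected). \emph{Upper bound:} identify all limit points. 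Given a subsequence with $\eps\log\p\{y^\eps\in F\}$ converging, use exponential tightness to extract a tight family, apply a Laplace-principle/variational argument: for each $g\in L_2(\lambda)$ the exponential supermartingale $\exp\big(\lambda\eps^{-1}(J(g)(y^\eps(t))-J(g)(\id))-\tfrac{\lambda^2}{2\eps}\int_0^t\|\pr_{y^\eps(s)}g\|^2ds\big)$ has expectation $\le 1$, and optimizing over $\lambda$ and over a countable dense family of $g$'s forces any limit path to be absolutely continuous with $\|\dot\varphi(t)\|_{L_2(\lambda)}\le\|\dot w(t)\|$ for the associated limiting control, giving $\liminf$ of the action $\ge\Ii(\varphi)$.

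The main obstacle I expect is the \emph{upper bound together with the identification of the rate function}, specifically showing that the projection $\pr_{y^\eps(s)}$ does not allow the limiting path to "cheat": one needs that whenever $y^\eps\to\varphi$ in $C([0,T],L_2(\mu))$, the limiting velocity $\dot\varphi(t)$ actually lies in the range of $\pr_{\varphi(t)}$ $dt$-a.e., so that no contribution to the action is lost. This is delicate because the projection subspaces $\mathrm{ran}\,\pr_{h}$ depend discontinuously on $h$ (they jump when particles coalesce), so a careful lower-semicontinuity argument for $h\mapsto\|\pr_h g\|_{L_2(\lambda)}$ along monotone convergent sequences, combined with the fact that $\varphi$ itself is the (weak-$L_2(\lambda)$) limit of its own increments, is required. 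A secondary technical point is passing from the weighted space $L_2(\mu)$ in which tightness is cheap to the unweighted $L_2(\lambda)$ norm appearing in $\Ii$; this is handled by noting that for uniformly bounded monotone functions the two topologies agree on the relevant compacts, up to the endpoint behavior controlled by $\beta>1$.
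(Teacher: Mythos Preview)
Your three-step architecture (exponential tightness, Girsanov lower bound, exponential-martingale upper bound) is exactly the one the paper uses, so the strategy is sound. However, you have inverted where the real difficulty sits.

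The upper bound is actually the \emph{easy} direction, precisely because the inequality $\|\pr_{y^\eps(s)}h(s)\|_{L_2(\lambda)}^2\le\|h(s)\|_{L_2(\lambda)}^2$ goes the right way: in the exponential martingale $\exp\bigl(\eps^{-1}[\,\int_0^T(h,dy^\eps)-\tfrac12\int_0^T\|\pr_{y^\eps}h\|^2\,]\bigr)$ one simply bounds $-\|\pr_{y^\eps}h\|^2\ge -\|h\|^2$ to replace the awkward functional by a continuous one $\Phi(\varphi,h)$, and then a Riesz-representation argument identifies $\sup_h\Phi(\varphi,h)=\Ii(\varphi)$. No lower semicontinuity of $h\mapsto\|\pr_h g\|$ is needed here, and there is no ``cheating'' to rule out: the projection only makes the exponential martingale tighter, hence the bound better.

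The genuine gap is in your lower bound. You write that under the Girsanov shift by $\dot\varphi$ ``$y^\eps$ concentrates near $\varphi$'' and ``coalescence is unaffected'', but this is exactly what must be proved and it is nontrivial. After the change of measure the drift of $y^\eps$ is $\pr_{y^\eps(s)}\dot\varphi(s)$, \emph{not} $\dot\varphi(s)$; for positive $\eps$ the process does coalesce, so the projection is not the identity and the drifted system is a genuinely different, $y^\eps$-dependent ODE plus small noise. Showing that the limit as $\eps\to0$ is $\varphi$ requires proving that the limit process takes values in the \emph{strictly} increasing functions $L_2^{\uparrow\uparrow}(\mu)$, so that $\pr_{z}\dot\varphi\to\dot\varphi$; the paper devotes an entire appendix to this (controlling gaps between particles under the drift so that they do not close in the limit). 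Moreover the Radon--Nikodym density is not $\exp(-\Ii(\varphi)/\eps)$ but $\exp(-F(y^\eps,\dot\varphi)/\eps)$ with $F$ still containing $\|\pr_{y^\eps}\dot\varphi\|^2$, and passing to $\Ii(\varphi)$ again needs continuity of the projection at strictly monotone $\varphi$. All of this forces the Girsanov argument to be run first only for a restricted class $\RR$ of strictly increasing, smooth $\varphi$, followed by a separate $\Ii$-approximation lemma showing $\RR$ is dense in $\{\Ii<\infty\}$; your sketch omits both steps.
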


Since the processes
$y^\eps(\cdot)$ solve
$$
dy^\eps(s) = \pr_{y^\eps(s)} \sqrt \eps dW_s,
$$
Theorem~\ref{theorem_LDP} appears as an instance of the classical
Freidlin-Wentzel LDP for solutions of SDE, but here we have to deal with
additional difficulties since the diffusion operator $g\to \sigma(g) = \pr_g$
is not continuous as an operator-valued map  on $L_2(\lambda)$,
and generally little is known about such
large deviation principles for solutions of a SDE with non-smooth coefficients
even in finite dimensions. In our case we can overcome these difficulties with
additional arguments, using the fact that $\sigma$ is continuous on strictly
monotone $y \in L_2(\lambda)$.

\subsubsection{Properly chosen subsets $A \subset 
\Pp(\R)$}\label{sububsection_prop_chosen}

In order to specify the conditions on the set $A$ for the validity of formula 
\eqref{f_Varadhan_formula},  let $\tau_\rho$ denote the image topology on
$\mathcal P(\R)$ of the $L_2(\rho)$-topology on $D^{\uparrow}([0,1])$ induced from
the bijection~
$$
\iota: \,  g \mapsto   g_\#\leb.
$$
We call  a set $A\subset \mathcal P 
(\R)$  displacement convex if it is the image of a convex subset of 
$D^{\uparrow}([0,1])$ under the map~$\iota$. A set is properly chosen for the 
validity of Varadhan's formula as in Theorem~\ref{thm_varadhan_formula}, for instance, if it is displacement convex $\tau_\rho$-closed with non-empty 
$\tau_{\rho}$-interior.

\begin{rmk}
It is possible to construct a process $y$ in a similar fashion on a circle $S$ with a proper notion of martingale on $S$. In this case the family $\{y^\eps(\cdot)=y(\eps\cdot)\}_{\eps\in(0,1]}$ will be exponentially tight in $C([0,T],L_2(\lambda))$, since the state space $L_2^{\uparrow}(\lambda)$ is compact. Consequently, the large deviation principle can be proved in $C([0,T],L_2(\lambda))$ and thus, it will imply that the Varadhan formula~\eqref{f_Varadhan_formula} holds for any measurable set $A$ that belongs to the space $\mathcal P(S)$ of probability measures on $S$ and
satisfies $\overline{\inter{A}}=\overline{A}$, for instance.
\end{rmk}

{\it The organization of the paper is as follows.} 
In section~\ref{section_constr} we give a
streamlined review of the construction of the modified massive Arratia flow 
from~\cite{Konarovskyi:2014:arx}\footnote{Here we construct the process directly on $[0,T]$ as a limit of particle systems, whereas in~\cite{Konarovskyi:2014:arx} the construction also included an $\eps\to 0$ limit for a sequence of processes on $[\eps,T]$.}. In section~\ref{section_SA} we introduce some 
elements of a stochastic
calculus relative to $y$ to the extent needed in the sequel. The final section~\ref{section_LDP}
is devoted to the proof of the large deviations principle Theorem~\ref{theorem_LDP}.

\section{Construction by a system of coalescing heavy diffusion particles}\label{section_constr}

\subsection{A finite number of particles}
We consider a finite system of particles which start from the points $\frac{k}{n}$, $k=1,\ldots,n$, with the mass $\frac{1}{n}$, where $n\in\N$ is fixed.

\begin{prp}\label{prop_finite_syst}
For each $n$, there exists a set of processes $\{x_k^n(t),\ k=1,\ldots,$ $n,\ t\in[0,T]\}$ that satisfies the
following conditions
\begin{enumerate}
\item[(F1)] for each $k$, $x_k^n$ is a continuous square integrable martingale with
respect to the filtration
$$
\F_t^n=\sigma(x_l^n(s),\ s\leq t,\ l=1,\ldots,n);
$$

\item[(F2)] for all $k$, $x_k^n(0)=\frac{k}{n}$;

\item[(F3)] for all $k<l$ and $t\in[0,T]$, $x_k^n(t)\leq x_l^n(t)$;

\item[(F4)] for all $k$ and $l$,
$$
[x_k^n,x_l^n]_t=\int_0^t\frac{\I_{\{\tau_{k,l}^n\leq s\}}ds}{m_k^n(s)},
$$
where $m_k^n(t)=\frac{1}{n}\#\{j:\ \exists s\leq t\ x_j^n(s)=x_k^n(s)\}$,
$\tau_{k,l}^n=\inf\{t:\ x_k^n(t)=x_l^n(t)\}\wedge T$ and $\#A$ denotes the number of points of $A$.
\end{enumerate}
\end{prp}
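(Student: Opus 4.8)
The plan is to construct the finite system $\{x_k^n\}_{k=1}^n$ explicitly by a recursive pasting of classical (rescaled) Brownian motions across successive coalescence times, and to verify (F1)--(F4) directly from the construction. First I would set $m_0 = 1/n$ for the initial mass of each of the $n$ particles and let $B_1, \dots, B_n$ be independent standard Brownian motions on a common probability space. On the initial time interval, before any two particles meet, define $x_k^n(t) = \tfrac{k}{n} + \sqrt{1/m_0}\, B_k(t) = \tfrac{k}{n} + \sqrt n\, B_k(t)$, so each particle is an independent martingale whose diffusivity is the reciprocal of its mass. Let $\tau^{(1)} = \inf\{t : x_k^n(t) = x_{k+1}^n(t) \text{ for some } k\}$ be the first meeting time; by continuity and independence this is a.s. positive, and a.s. exactly one pair (two neighbouring particles, by the ordering that will be maintained) meets at $\tau^{(1)}$.

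Next I would describe the inductive step. Suppose at some stopping time $\sigma$ the particles have coalesced into clusters $C_1, \dots, C_r$ (an ordered partition of $\{1,\dots,n\}$ into consecutive blocks), where cluster $C_j$ has mass $m_j = |C_j|/n$ and common position $z_j(\sigma)$. On the next time interval, run $r$ fresh independent Brownian motions $\tilde B_1, \dots, \tilde B_r$ and set the common position of cluster $C_j$ to be $z_j(\sigma) + \sqrt{1/m_j}\,(\tilde B_j(t) - \tilde B_j(\sigma))$ for $t \ge \sigma$; all particles $k \in C_j$ share this trajectory. Stop at the next time two (necessarily adjacent) clusters meet, merge them into a single cluster of summed mass, and repeat. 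Since there are at most $n-1$ coalescence events, this procedure terminates and defines $x_k^n$ on all of $[0,T]$. One should check that the successive meeting times are a.s. strictly increasing and that, because the process is built from Brownian increments with bounded (indeed, piecewise-constant) diffusion coefficients $\le n$, each $x_k^n$ has finite second moment and is square integrable.

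Then I would verify the four properties. (F2) is immediate from the initialization. (F3): the ordering $x_k^n \le x_l^n$ for $k < l$ holds on the first interval by choice of starting points and independence up to the meeting time, and is preserved inductively because at each meeting two adjacent clusters fuse and thereafter move together, so the order is never violated. (F1): on each interrandom-time interval $x_k^n$ is a Brownian motion (time-shifted, scaled) adapted to $\F^n$, hence a continuous $\F^n$-martingale there; pasting continuous martingales across stopping times at which the values agree yields a continuous $\F^n$-martingale on $[0,T]$ (optional stopping / a standard pasting lemma), and square integrability follows from the uniform bound on the quadratic variation, $[x_k^n]_T \le n T$. (F4): for a single particle, $d[x_k^n]_t = \frac{dt}{m_k^n(t)}$ by construction, since on the interval where $k$ belongs to a cluster of mass $m$ the diffusion coefficient squared is $1/m$, and $m_k^n(t) = \tfrac1n \#\{j : \exists s \le t,\ x_j^n(s) = x_k^n(s)\}$ is exactly that cluster mass by the monotonicity of cluster growth (once coalesced, always coalesced). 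For the cross-variation of $k \ne l$: before $\tau_{k,l}^n$ the two particles lie in different clusters driven by independent Brownian motions, so $[x_k^n, x_l^n]$ does not grow; at and after $\tau_{k,l}^n$ they are in the same cluster, move identically, and share the common clock $dt/m(t)$ with $m(t) = m_k^n(t) = m_l^n(t)$; hence $[x_k^n,x_l^n]_t = \int_0^t \I_{\{\tau_{k,l}^n \le s\}}\, \frac{ds}{m_k^n(s)}$, which is (F4).

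The main obstacle is the bookkeeping at coalescence: one must argue that the meeting times are a.s. distinct (so that clusters merge one pair at a time and the inductive cluster structure is well defined), that at a meeting time only adjacent clusters collide, and that the pasting of martingales across this increasing sequence of stopping times genuinely produces a global continuous square-integrable martingale with the asserted joint quadratic variation. None of these points is deep — distinctness of meeting times follows from the independence of the driving Brownian motions and the fact that the difference of two independent Brownian motions hits $0$ at isolated (for the first hit) times; adjacency follows from (F3); the martingale pasting is the standard strong Markov / optional-sampling argument — but making the cluster dynamics and the identification $m_k^n(t) = $ (mass of the cluster of $k$ at time $t$) precise, uniformly in $k$, requires care. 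An alternative, slightly slicker route avoiding explicit pasting is to define the vector $X^n = (x_1^n, \dots, x_n^n)$ directly as the (pathwise unique) solution of a reflected-type SDE on the ordered cone $\{\xi_1 \le \dots \le \xi_n\}$ whose diffusion matrix encodes the current partition, but the recursive construction above is the most transparent and is what I would write out.
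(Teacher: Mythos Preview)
Your construction is correct and follows essentially the same approach the paper indicates: build the system from independent Wiener processes by coalescing their trajectories at successive meeting times, with diffusivity rescaled by the inverse of the accumulated mass. The paper itself does not spell out the details but simply cites \cite{Konarovskiy:2010:TVP:en} for exactly this construction (and for uniqueness in law), so your write-up is a faithful, more explicit version of the argument the paper defers to.
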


Such a system of processes can be constructed from a family of independent Wiener processes,
coalescing their trajectories. Moreover, $(F1)-(F4)$ uniquely determined the distribution of $x^n=(x_1^n,\ldots,x_n^n)$ in $(C[0,T])^n$ (see~\cite{Konarovskiy:2010:TVP:en}).

\subsection{Tightness of a finite system in the space $D([0,1],C[0,T])$}

Let 
$$
y_n(u,t)=\begin{cases}
              x_{\lfloor un\rfloor+1}^n(t),& u\in[0,1),\\
              x_n^n(t),& u=1,
           \end{cases}\quad t\in[0,T].
$$ 

\begin{prp}
The sequence $\{y_n(u,t),\ u\in[0,1],\ t\in[0,T]\}$ is tight in $D([0,1],C[0,T])$.
\end{prp}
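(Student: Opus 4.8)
The plan is to invoke a standard tightness criterion in the Skorokhod space $D([0,1],C[0,T])$ — for instance Theorems~3.8.6 and~3.8.8 together with Remark~3.8.9 of~\cite{Ethier:1986} — which reduces tightness of $\{y_n\}$ to two ingredients: (i) a compact containment / tightness condition at fixed spatial parameters $u$, and (ii) a modulus-of-continuity control in the spatial variable $u$, expressed through the oscillation functional. Ingredient (i) is exactly Lemma~\ref{lemma_tightness_in_C}, which gives that for each fixed $u\in[0,1]$ the $C[0,T]$-valued random variables $\{y_n(u,\cdot)\}_{n\ge 1}$ form a tight family; combined with the fact that $C[0,T]$ is Polish this supplies the required compact sets. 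So the work is in assembling ingredient (ii) from Lemmas~\ref{lemma_three_points} and~\ref{lemma_first_points}.

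First I would treat the bulk of the interval $[0,1]$. Lemma~\ref{lemma_three_points} is precisely the two-sided increment estimate
\[
\p\{\|y_n(u+h,\cdot)-y_n(u,\cdot)\|_{C[0,T]}>\lambda,\ \|y_n(u,\cdot)-y_n(u-h,\cdot)\|_{C[0,T]}>\lambda\}\le\frac{4h^2}{\lambda^2},
\]
which is the classical Chentsov/Billingsley-type hypothesis (with exponents giving $\gamma=2$ on $h$ and $\lambda^{-2}$ on the threshold, hence plenty of room above the critical value $1$) that controls the Skorokhod oscillation modulus $w'(y_n,\delta)$ on any compact subinterval bounded away from $0$. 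This yields, for every $\eta>0$,
\[
\lim_{\delta\to0}\ \limsup_{n\to\infty}\ \p\{w'(y_n,\delta)>\eta\}=0
\]
for the restriction of $y_n$ to $[\rho,1]$, uniformly in $n$, for each fixed $\rho>0$. Since the parametrization is by a right-continuous nondecreasing family, the only remaining danger is a jump accumulating at the left endpoint $u=0$; this is exactly what Lemma~\ref{lemma_first_points} rules out, because $\lim_{\delta\to0}\sup_n\E[\|y_n(\delta,\cdot)-y_n(0,\cdot)\|_{C[0,T]}^\beta\wedge1]=0$ forces, via Markov's inequality, the probability that $y_n$ has a spatial fluctuation of size $>\eta$ on $[0,\delta]$ to vanish uniformly in $n$ as $\delta\to0$. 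Patching the estimate on $[\rho,1]$ with the estimate on $[0,\rho]$ for $\rho$ small, one obtains the full oscillation control on $[0,1]$, and the criterion of~\cite{Ethier:1986} then gives tightness of $\{y_n\}$ in $D([0,1],C[0,T])$. Finally, since $D([0,1],C[0,T])$ is Polish, Prokhorov's theorem upgrades tightness to relative compactness, as stated.

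The main obstacle — and the reason Lemma~\ref{lemma_first_points} is isolated separately with the hypothesis $\beta>1$ — is precisely the behaviour at the boundary point $u=0$ (and symmetrically $u=1$): the generic two-point estimate of Lemma~\ref{lemma_three_points} is a \emph{product} of two increments and degenerates to a vacuous bound when one of the two flanking increments is missing, i.e.\ at an endpoint, so it cannot by itself prevent a macroscopic jump from forming at $0$. One must therefore separately show that the mass carried near the endpoint is small enough that the trajectory started from $0$ stays close to the trajectory started from a nearby point; this is where the one-sided martingale/quadratic-variation bound (the coalescence at the boundary, controlled by $m_n$, together with the $\beta>1$ regularity of the reference density $\kappa$) does the real work. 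Once that boundary layer is handled, the interior argument is the routine Chentsov-criterion verification sketched above.
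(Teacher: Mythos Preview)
Your proposal is correct and follows essentially the same route as the paper: the paper's proof is literally a one-line appeal to Lemmas~\ref{lemma_three_points}, \ref{lemma_first_points}, \ref{lemma_tightness_in_C} together with Theorems~3.8.6, 3.8.8 and Remark~3.8.9 of~\cite{Ethier:1986}, which is exactly your plan.

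One small correction to your closing commentary: the mechanism behind Lemma~\ref{lemma_first_points} has nothing to do with the weight $\kappa$ (that density only enters later, in the LDP section) nor with $m_n$; the proof just uses that $y_n(\delta,\cdot)-y_n(0,\cdot)$ is a nonnegative submartingale, stops it at level~$1$, applies Doob's inequality, and uses $x^\beta\le x$ on $[0,1]$ (this is the only role of $\beta>1$). Also, the right endpoint $u=1$ is not handled by a symmetric version of Lemma~\ref{lemma_first_points} but by the extension convention $y_n(u,\cdot)=y_n(1,\cdot)$ for $u\in[1,2]$ built into Lemma~\ref{lemma_three_points}, which makes the right-hand increment vanish there.
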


The statement will follow from theorems~3.8.6 and~3.8.8~\cite{Ethier:1986} and Remark~3.8.9~ibid. The following lemmas~\ref{lemma_three_points},~\ref{lemma_first_points} and \ref{lemma_tightness_in_C} can be used to check conditions (8.39), (8.30) of~\cite{Ethier:1986} and (a) of Theorem~3.7.2~ibid., respectively.

\begin{lem}\label{lemma_three_points}
For all $n\in\N$, $u\in[0,2]$, $h\in[0,u]$ and $\lambda>0$
$$
\p\{d_{\infty}(y_n(u+h,\cdot),y_n(u,\cdot))>\lambda,\
d_{\infty}(y_n(u,\cdot),y_n(u-h,\cdot))>\lambda\}\leq\frac{4h^2}{\lambda^2}.
$$
Here $y_n(u,\cdot)=y_n(1,\cdot)$, $u\in[1,2]$, and $d_{\infty}$ is the uniform distance on
$[0,T]$.
\end{lem}

\begin{lem}\label{lemma_first_points}
For all $\beta>1$
$$
\lim_{\delta\to 0}\sup\limits_{n\geq 1}\E\left[d_{\infty}(y_n(\delta,\cdot),y_n(0,\cdot))^{\beta}\wedge
1\right]=0.
$$
\end{lem}

Lemmas~\ref{lemma_three_points} and~\ref{lemma_first_points} ware proved in~\cite{Konarovskyi:2014:arx} (see lemmas~2.2 and~2.3). The following statement is a new result.

\begin{lem}\label{lemma_tightness_in_C}
For all $u\in[0,1]$ the sequence $\{y_n(u,t),\ t\in[0,T]\}_{n\geq 1}$ is tight in
$C[0,T]$.
\end{lem}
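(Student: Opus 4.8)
The goal is to prove Lemma~\ref{lemma_tightness_in_C}: for fixed $u \in [0,1]$, the family $\{y_n(u,\cdot)\}_{n\geq 1}$ is tight in $C[0,T]$.

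\textbf{Plan of proof.} The plan is to apply the standard Kolmogorov--Chentsov tightness criterion in $C[0,T]$: it suffices to exhibit, uniformly in $n$, a moment bound of the form $\E|y_n(u,t)-y_n(u,s)|^{p} \leq C|t-s|^{1+\gamma}$ for some $p,\gamma>0$, together with tightness of the one-dimensional marginals at a single time (here trivial, since $y_n(u,0)=\lfloor un\rfloor+1)/n \in [0,1]$ is bounded). First I would fix $u$ and observe that by property $(F1)$, $y_n(u,\cdot)$ equals $x^n_{\lfloor un\rfloor+1}(\cdot)$ (or $x^n_n$ when $u=1$), which is a continuous square-integrable $(\F^n_t)$-martingale started at a point in $[0,1]$. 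By the Burkholder--Davis--Gundy inequality it thus suffices to control the quadratic variation $[y_n(u,\cdot)]$. From property $(F4)$ with $k=l$ we get
$$
[y_n(u,\cdot)]_t = \int_0^t \frac{ds}{m^n_{\lfloor un\rfloor+1}(s)} \leq \int_0^t \frac{ds}{1/n} = nt,
$$
since each particle's mass is at least $1/n$. This crude bound is $n$-dependent and therefore not directly usable, so the real content is to upgrade it.

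\textbf{The key observation} is that the total quadratic variation budget of the system is deterministically bounded independently of $n$. Indeed, summing $(F4)$ over the diagonal and using that whenever particles have coalesced they share a common mass, one can show $\sum_{k} \frac1n [x^n_k,\cdot]_t = \int_0^t \#\{\text{distinct particle positions at time } s\}/n \cdot (1/(\text{mass})) \, ds$, and since the number of clusters times their common mass sums to at most $1$, in fact $\sum_k \frac1n [x^n_k]_t \leq t$ — i.e. the \emph{mass-weighted} sum of quadratic variations is at most $t$, uniformly in $n$. However, this bounds an average, not the individual particle $k=\lfloor un\rfloor+1$. To pass from the averaged bound to a bound for a fixed label, I would instead argue via the ordering $(F3)$: since $0 \leq y_n(0,t) \leq y_n(u,t) \leq y_n(1,t)$ and the extreme particles $y_n(0,\cdot)$, $y_n(1,\cdot)$ are themselves martingales, one controls $y_n(u,t)$ between two processes whose increments are handled by Lemma~\ref{lemma_first_points} and its mirror image at the right endpoint. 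More precisely, $y_n(u,\cdot)$ is a martingale sandwiched between $y_n(0,\cdot)$ and $y_n(1,\cdot)$, both of which have uniformly (in $n$) controlled modulus of continuity; combining the martingale property of $y_n(u,\cdot)$ with the two-sided bound $y_n(0,t) \leq y_n(u,t) \leq y_n(1,t)$ forces $\E\sup_{t\leq T}|y_n(u,t)|^2$ and the increments of $y_n(u,\cdot)$ to be dominated by those of the envelope processes. Then Kolmogorov's criterion applied to the envelope (or a direct Aldous-type stopping-time tightness argument) yields tightness of $y_n(u,\cdot)$ in $C[0,T]$.

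\textbf{The main obstacle} will be making the sandwich argument quantitative enough for Kolmogorov--Chentsov: being squeezed between two tight families in $C[0,T]$ does not, by itself, make a monotone-in-$u$ family of continuous processes tight, because the squeeze is only at the two endpoints $u\in\{0,1\}$ and does not control the increment $y_n(u,t)-y_n(u,s)$ on its own. The way around this is to use the martingale structure: for the martingale $y_n(u,\cdot)$, $\E[(y_n(u,t)-y_n(u,s))^2\,|\,\F^n_s] = \E[[y_n(u,\cdot)]_t - [y_n(u,\cdot)]_s \,|\, \F^n_s]$, and one bounds this conditional quadratic-variation increment by the corresponding increment of a single master clock $\int_s^t (\#\text{clusters}(r))\,dr \leq n(t-s)$ — still $n$-dependent unless one exploits that clusters merge quickly. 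The clean resolution, which I expect the authors to use, is a uniform estimate of the form $\E[[y_n(u,\cdot)]_t - [y_n(u,\cdot)]_s] \leq C\sqrt{t-s}$ or $\E\,\|y_n(u,\cdot)\text{ oscillation on }[s,t]\|^\beta \leq C(t-s)^{\delta}$ for some $\delta>1$, obtained by combining $(F4)$ with a control on how fast $m^n_k$ grows away from $1/n$ (coalescence happens on a short time scale for nearby starting points), exactly the mechanism already exploited in Lemma~\ref{lemma_first_points}. Granting such an estimate — which follows from $(F1)$--$(F4)$ by the same supermartingale computation as in Lemma~\ref{lemma_three_points} applied to products of increments — the Kolmogorov criterion closes the argument and the lemma is proved.
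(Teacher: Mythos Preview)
Your plan has a genuine gap: the Kolmogorov--Chentsov route does not close with the estimate you propose. Even granting a uniform bound of the type $\E\bigl([y_n(u,\cdot)]_t-[y_n(u,\cdot)]_s\bigr)\leq C\sqrt{t-s}$, this yields only $\E|y_n(u,t)-y_n(u,s)|^{2}\leq C|t-s|^{1/2}$, with exponent $1/2<1$, which is insufficient for Kolmogorov--Chentsov. Trying to push to higher moments via BDG runs into the obstacle that the needed bound $\E\,m_n(u,r)^{-\alpha}\leq C r^{-1/2}$ is only available for $\alpha<3/2$, so $\E[(\int_s^t m_n^{-1}\,dr)^{p/2}]$ cannot be controlled uniformly in $n$ for $p\geq 3$ by a simple H\"older step; and using the monotonicity of $m_n$ to write $\int_s^t m_n^{-1}\leq (t-s)/m_n(u,s)$ produces a constant $\E\,m_n(u,s)^{-p/2}$ which blows up at $s=0$. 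The sandwich between $y_n(0,\cdot)$ and $y_n(1,\cdot)$, as you yourself note, does not control increments of $y_n(u,\cdot)$, and Lemma~\ref{lemma_three_points} concerns gaps between neighbouring particles, not the growth of $m_n$; it does not yield the mass estimate you invoke at the end.

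The paper avoids all of this by using the Aldous criterion instead of Kolmogorov--Chentsov. Marginal tightness (A1) is handled via the monotonicity in $u$ together with the fact that $\int_0^1 y_n(q,t)\,dq$ is a Brownian motion. For (A2) one must bound $\E\int_{\sigma_n}^{\sigma_n+\delta_n} m_n(u,s)^{-1}\,ds$ for arbitrary stopping times $\sigma_n$; the paper does this by H\"older in $(s,\omega)$, pairing the indicator $\I_{(\sigma_n,\sigma_n+\delta_n]}$ with $m_n^{-1}$, which yields $\delta_n^{1/4}\bigl(\E\int_0^T m_n(u,s)^{-4/3}\,ds\bigr)^{3/4}$. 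The crucial input is therefore the mass estimate $\E\,m_n(u,t)^{-\alpha}\leq C t^{-1/2}$ with some $\alpha>1$ (here $\alpha=4/3$), quoted from Lemma~2.16 of~\cite{Konarovskyi:2014:arx}; this is a nontrivial independent result and not a consequence of the supermartingale argument in Lemma~\ref{lemma_three_points}. The Aldous framework is what allows one to trade the missing Kolmogorov exponent for a H\"older inequality against a function of $s$ that is merely integrable on $[0,T]$.
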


\begin{proof}
To prove the lemma we use the Aldous tightness criterion (see e.g.
Theorem~3.6.5~\cite{Dawson:1993}), namely we show that
\begin{enumerate}
\item[(A1)] for all $t\in[0,T]$ the sequence $\{y_n(u,t)\}_{n\geq 1}$ is tight in $\R$;

\item[(A2)] for all $r>0$, each set of stopping times $\{\sigma_n\}_{n\geq 1}$ taking values in
$[0,T]$ and each sequence $\delta_n\searrow 0$
$$
\lim_{n\to\infty}\p\{|y_n(u,\sigma_n+\delta_n)-y_n(u,\sigma_n)|\geq r\}=0.
$$
\end{enumerate}

Note that $(A1)$ follows from Chebyshev's inequality and the estimate
\begin{align*}
\E |y_n(u,t)|&\leq\E\left|y_n(u,t)-\int_0^1y_n(q,t)dq\right|+\E\left|\int_0^1y_n(q,t)dq\right|\\
&\leq\E(y_n(1,t)-y_n(0,t))+\E\left|\int_0^1y_n(q,t)dq\right|\\
&=1+\E\left|\int_0^1y_n(q,t)dq\right|,
\end{align*}
where $\int_0^1y_n(q,t)dq$ is a Wiener process.

Condition $(A2)$ can be checked as follows. Similarly as in the proof of
Lemma~2.16~\cite{Konarovskyi:2014:arx}, we have that for each
$\alpha\in\left(0,\frac{3}{2}\right)$ there exists a constant $C$ such that for all $u\in[0,1]$ and
$n\geq 1$
$$
\E\frac{1}{m_n^{\alpha}(u,t)}\leq\frac{C}{\sqrt{t}},
$$
where
$$
m_n(u,t)=\begin{cases}
              m_{[un]+1}^n(t),& u\in[0,1),\\
              m_n^n(t),& u=1,
           \end{cases}\quad t\in[0,T].
$$
Thus, one can estimate
\begin{align*}
\varlimsup_{n\to\infty}\p\{|y_n(u,\sigma_n&+\delta_n)-y_n(u,\sigma_n)|\geq r\}\\
&\leq\frac{1}{r^2}\varlimsup_{n\to\infty}\E(y_n(u,\sigma_n+\delta_n)-y_n(u,\sigma_n))^2\\
&=\frac{1}{r^2}\varlimsup_{n\to\infty}\E\int_{\sigma_n}^{\sigma_n+\delta_n}\frac{1}{m_n(u,s)}ds\\
&=\frac{1}{r^2}\varlimsup_{n\to\infty}\E\int_0^T\I_{(\sigma_n,\sigma_n+\delta_n]}\frac{1}{m_n(u,
s)}ds\\
&\leq\frac{1}{r^2}\varlimsup_{n\to\infty}\left(\E\int_0^T\I_{(\sigma_n,\sigma_n+\delta_n]}
ds\right)^{\frac{1}{4}}\left(\E\int_0^T\frac{1}{m_n^{\frac{4}{3}}(u,s)}ds\right)^{\frac{3}{4}}\\
&\leq\frac{2^{\frac{3}{4}}CT^{\frac{3}{8}}}{r^2}\varlimsup_{n\to\infty}\delta_n^{\frac{1}{4}}=0.
\end{align*}
\end{proof}

\subsection{Martingale characterization of limit points (proof of Theorem~\ref{theorem_main_result})}

Since the space $D([0,1],C[0,T])$ is Polish, the tightness implies the relative compactness of $\{y_n(u,t),\ u\in[0,1],\ t\in[0,T]\}$ in $D([0,1],C[0,T])$. In this section we explain how one can prove that every limit point of $\{y_n\}$ satisfies $(C1)-(C4)$, which proves Theorem~\ref{theorem_main_result}. The idea is the same as in~\cite{Konarovskyi:2014:arx}.

Let $\{y_{n'}\}$ converge to $y$
weakly in the space $D([0,1],C[0,T])$ for some subsequence $\{n'\}$. By Skorokhod's theorem (see
Theorem~3.1.8~\cite{Ethier:1986}) we may suppose that $\{y_{n'}\}$ converge to $y$ in $D([0,1],C[0,T])$ a.s. For
convenience of notation we will suppose that the $\{y_n\}$ converge to $y$ in $D([0,1],C[0,T])$ a.s. Next, to prove the
theorem, first we show that $y_n(u,\cdot)$ tends to $y(u,\cdot)$ in $C[0,T]$ a.s. Note that, in
general, this does not follow from convergence in the space $D([0,1],C[0,T])$. So we need some
continuity property of $y(u,\cdot)$, $u\in[0,1]$, in $u$.

\begin{lem}\label{lemma_cont}
For all $u\in[0,1]$ one has
$$
\p\{y(u,\cdot)\neq y(u-,\cdot)\}=0.
$$
\end{lem}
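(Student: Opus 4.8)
The goal is to show that for every fixed $u\in[0,1]$, the limit process $y$ has no jump at $u$ almost surely, i.e.\ $\p\{y(u,\cdot)\neq y(u-,\cdot)\}=0$ in the metric space $C[0,T]$.

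\medskip

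\textbf{Plan.} I would reduce the statement to a quantitative estimate on the size of the jump and then pass to the limit using the tightness/convergence estimates already established, in particular Lemma~\ref{lemma_three_points} and Lemma~\ref{lemma_first_points}. First, fix $u\in(0,1)$ (the endpoints $u=0$ and $u=1$ being handled by a one-sided version of the same argument, and $u=0$ in fact directly by Lemma~\ref{lemma_first_points} combined with monotonicity). For $h>0$ small, the key observation from $(C3)$ is that $y(\cdot,t)$ is monotone for each $t$, so that
\[
0\le y(u+h,t)-y(u-h,t)
\]
and a jump of size $\ge\lambda$ of $t\mapsto y(\cdot,t)$ ``at'' $u$ forces both increments $y(u+h,\cdot)-y(u,\cdot)$ and $y(u,\cdot)-y(u-h,\cdot)$ to be large in $C[0,T]$ for arbitrarily small $h$; more precisely, $\{\|y(u,\cdot)-y(u-,\cdot)\|_{C[0,T]}>\lambda\}$ is, up to a null set, contained in $\bigcap_{h>0}\{\|y(u+h,\cdot)-y(u,\cdot)\|_{C[0,T]}>\lambda/2\}$ after also using that $y(u-,\cdot)=\lim_{h\downarrow 0}y(u-h,\cdot)$ in $C[0,T]$ by left-continuity in the Skorokhod space. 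Hence it suffices to bound $\p\{\|y(u+h,\cdot)-y(u,\cdot)\|_{C[0,T]}>\lambda/2,\ \|y(u,\cdot)-y(u-h,\cdot)\|_{C[0,T]}>\lambda/2\}$ and let $h\to0$.

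\medskip

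\textbf{Passing to the limit.} Since $y_n\to y$ a.s.\ in $D([0,1],C[0,T])$ (by the Skorokhod coupling as in the text), I would transfer the estimate of Lemma~\ref{lemma_three_points} to $y$. There is a subtlety: convergence in $D([0,1],C[0,T])$ does not give pointwise-in-$u$ convergence of $y_n(u,\cdot)$ to $y(u,\cdot)$ in $C[0,T]$ at a fixed $u$ (this is precisely why Lemma~\ref{lemma_cont} is needed), but it does give convergence at all continuity points of $y(\cdot)$, and, crucially, the set of $u$ that are discontinuity points is at most countable, so we may choose $h$ along a sequence $h_k\downarrow0$ for which both $u\pm h_k$ are continuity points of $y$. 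Along such a sequence, $y_n(u\pm h_k,\cdot)\to y(u\pm h_k,\cdot)$ in $C[0,T]$ a.s., while for the middle point one uses the Skorokhod-space structure: $u$ being arbitrary, one also has $y_n(u,\cdot)\to y(u,\cdot)$ or at worst $y_n(u_n,\cdot)\to y(u,\cdot)$ for suitable $u_n\to u$; either way the product of increments in $M_n(0)$-type quantities passes to the limit. Applying Lemma~\ref{lemma_three_points} to $y_n$ with parameters $u,h_k$ and using Fatou / weak convergence of the events (the events are essentially closed in the relevant sense after the continuity-point reduction), we obtain
\[
\p\{\|y(u+h_k,\cdot)-y(u,\cdot)\|_{C[0,T]}>\lambda/2,\ \|y(u,\cdot)-y(u-h_k,\cdot)\|_{C[0,T]}>\lambda/2\}\le\frac{16 h_k^2}{\lambda^2}.
\]
Letting $k\to\infty$ gives that this probability is $0$ for every $\lambda>0$, and taking $\lambda\to0$ along a sequence yields $\p\{\|y(u,\cdot)-y(u-,\cdot)\|_{C[0,T]}>0\}=0$, which is the claim.

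\medskip

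\textbf{Main obstacle.} The delicate point is handling the middle value $y(u,\cdot)$: a fixed level $u$ need not be a continuity point, so $y_n(u,\cdot)$ may fail to converge to $y(u,\cdot)$ in $C[0,T]$, and one must argue that this does not spoil the estimate. I expect the cleanest route is to avoid evaluating at $u$ directly in the limiting event: instead bound $\p\{\|y(u+h,\cdot)-y(u-h,\cdot)\|_{C[0,T]}>\lambda\}$, which by $(C3)$-monotonicity and the Cauchy–Schwarz-type supermartingale argument underlying Lemma~\ref{lemma_three_points} is $\le \E[(y_n(u+h,0)-y_n(u-h,0))] + o(1) = 2h + o(1)$ in the limit (here only the well-behaved points $u\pm h$, chosen to be continuity points, enter), and then observe that $\{\|y(u,\cdot)-y(u-,\cdot)\|_{C[0,T]}>\lambda\}\subset\bigcap_{h>0}\{\|y(u+h,\cdot)-y(u-h,\cdot)\|_{C[0,T]}>\lambda\}$ up to null sets, whose probability is $\le\inf_{h>0}(2h)=0$. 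This circumvents the pointwise-convergence issue entirely and is the step I would be most careful to write out rigorously.
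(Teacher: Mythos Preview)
Your final paragraph lands on the paper's approach: avoid the middle point $u$ entirely, bound $\p\{\|y(u+h,\cdot)-y(u-h,\cdot)\|_{C[0,T]}>\lambda\}$, and use that the jump event at $u$ is contained in this for every $h>0$. The earlier attempt via Lemma~\ref{lemma_three_points} is a detour you correctly abandon; the paper never uses the three-point estimate here.

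Two details need correction. First, the bound ``$\le \E[(y_n(u+h,0)-y_n(u-h,0))]+o(1)=2h+o(1)$'' is wrong as written --- a probability estimate must carry the threshold $\lambda$. The relevant input is not Lemma~\ref{lemma_three_points} but the argument behind Lemma~\ref{lemma_first_points}: $y_n(u+h,\cdot)-y_n(u-h,\cdot)$ is a nonnegative martingale with initial value $\approx 2h$, and after stopping at level $1$ Doob's $L^\beta$ inequality for $\beta>1$ gives $\E[\|y_n(u+h,\cdot)-y_n(u-h,\cdot)\|_{C[0,T]}^\beta\wedge 1]\le C_\beta\cdot 2h$, hence $\p\{\cdots>\lambda\}\le C_\beta\cdot 2h/\lambda^\beta$. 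This still tends to $0$ as $h\to 0$, so your conclusion survives. Second, to transfer the estimate from $y_n$ to $y$ the paper does not select continuity points but uses a doubling trick: by monotonicity in $u$ and the fact that the Skorokhod reparametrizations converge uniformly to the identity, one has the a.s.\ inclusion $\{\|y(u+\delta,\cdot)-y(u-\delta,\cdot)\|>\gamma\}\subset\liminf_n\{\|y_n(u+2\delta,\cdot)-y_n(u-2\delta,\cdot)\|>\gamma\}$, which works for every $u$ without exceptional sets. Your continuity-point route is also valid but slightly more delicate to write out.
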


\begin{proof}
The proof is similar to one of Lemma~2.9~\cite{Konarovskyi:2014:arx}.
\end{proof}

\begin{cor}\label{coroll_mart_prop}
For all $u\in[0,1]$
$$
y_n(u,\cdot)\to y(u,\cdot)\ \ \mbox{in}\ \ C[0,T]\ \ \mbox{a.s.}
$$
\end{cor}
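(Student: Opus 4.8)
The plan is to upgrade the a.s.\ convergence $y_n \to y$ in $D([0,1],C[0,T])$ to the pointwise-in-$u$ a.s.\ convergence $y_n(u,\cdot)\to y(u,\cdot)$ in $C[0,T]$, using the continuity of $u\mapsto y(u,\cdot)$ at the fixed point $u$ established in Lemma~\ref{lemma_cont}. First I would recall the characterization of convergence in $D([0,1],C[0,T])$: there exist homeomorphisms $\mu_n$ of $[0,1]$ with $\mu_n\to\id$ uniformly and $y_n(\mu_n(u),\cdot)\to y(u,\cdot)$ uniformly on $[0,1]$ (in the $C[0,T]$-norm). Fixing $u\in[0,1]$, by Lemma~\ref{lemma_cont} we have $y(u,\cdot)=y(u-,\cdot)$ a.s., and since $y$ is non-decreasing in the first argument, a similar (in fact simpler) argument gives $y(u,\cdot)=y(u+,\cdot)$ a.s.\ as well; hence $u$ is a.s.\ a continuity point of $y(\cdot,\cdot)$ in the first coordinate.

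On the event where both the $D$-convergence and this continuity hold, I would fix $\eta>0$ and choose $\delta>0$ with $\|y(v,\cdot)-y(u,\cdot)\|_{C[0,T]}<\eta$ whenever $|v-u|<\delta$. For $n$ large enough we have $\|\mu_n-\id\|_\infty<\delta/2$ and $\sup_{v}\|y_n(\mu_n(v),\cdot)-y(v,\cdot)\|_{C[0,T]}<\eta$. Now pick $v_n$ with $\mu_n(v_n)=u$ (possible since $\mu_n$ is onto $[0,1]$, at least for interior $u$; at the endpoints use $v_n=u$ directly by the definition of $y_n$ and monotonicity); then $|v_n-u|<\delta$, so
\[
\|y_n(u,\cdot)-y(u,\cdot)\|_{C[0,T]} \le \|y_n(\mu_n(v_n),\cdot)-y(v_n,\cdot)\|_{C[0,T]} + \|y(v_n,\cdot)-y(u,\cdot)\|_{C[0,T]} < 2\eta.
\]
Since $\eta>0$ was arbitrary, this yields $y_n(u,\cdot)\to y(u,\cdot)$ in $C[0,T]$ on a full-measure event, which is exactly the claim of the corollary.

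The one point requiring a little care, and the main obstacle, is handling the monotonicity/endpoint bookkeeping cleanly: the processes $y_n$ are only c\`adl\`ag (not continuous) in $u$, so one must be slightly careful that the reparametrizations $\mu_n$ and the choice of $v_n$ interact correctly with the jumps, and that the estimate above does not secretly rely on evaluating $y_n$ or $y$ at a discontinuity point. This is resolved precisely because Lemma~\ref{lemma_cont} guarantees $u$ is a continuity point of the limit, so the jumps of $y_n$ near $u$ must shrink; squeezing $y_n(u,\cdot)$ between $y_n(u-\delta,\cdot)$ and $y_n(u+\delta,\cdot)$ (using $(F3)$) and passing to the limit via Lemma~\ref{lemma_first_points}-type bounds gives the same conclusion without invoking the explicit $\mu_n$ at all, which may in fact be the more robust route to write down.
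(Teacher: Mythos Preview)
Your argument is correct and is exactly the approach the paper has in mind: the corollary is stated without proof because it is the standard fact that Skorokhod convergence $y_n\to y$ in $D([0,1],C[0,T])$ implies $y_n(u,\cdot)\to y(u,\cdot)$ at every continuity point $u$ of the limit, and Lemma~\ref{lemma_cont} supplies precisely that continuity a.s.\ at the fixed $u$. One small simplification: you do not need a separate argument for $y(u,\cdot)=y(u+,\cdot)$, since $y\in D([0,1],C[0,T])$ is right-continuous by definition.
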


Corollary~\ref{coroll_mart_prop} and Proposition~9.1.17~\cite{Jacod:2003} immediately imply
properties $(C1)-(C3)$. Property $(C4)$ can be proved by the following lemma and the representation
of $m(u,t)$ and $m_n(u,t)$ via $\tau_{u,v}$, $v\in[0,1]$, and $\tau_{u,v}^n$, $v\in[0,1]$, i.e.
\begin{align*}
m(u,t)&=\int_0^1\I_{\{\tau_{u,v}\leq t\}}dv,\\
m_n(u,t)&=\int_0^1\I_{\{\tau_{u,v}^n\leq t\}}dv,
\end{align*}
similarly as it was done in the proofs of lemmas~2.13 and 2.15~\cite{Konarovskyi:2014:arx}.

\begin{lem}\label{lemm_conv}
Let $\{z_n(t),\ t\in[0,T]\}_{n\geq 1}$, be a sequence of continuous local martingales (not necessary with respect to the same filtration) such that for all $n\geq 1$ and $s,t\in[0,\tau_n]$, $s<t$
\begin{equation}\label{rest_char}
[ z_n(\cdot)]_t-[ z_n(\cdot)]_s\geq p(t-s),
\end{equation}
where $\tau_n=\inf\{t:\ z_n(t)=0\}\wedge T$ and $p$ is a non-random positive constant. Let $z(t),\
t\in[0,T],$ be a continuous process such that
$$
z(\cdot\wedge\tau)=\lim_{n\to\infty}z_n(\cdot\wedge\tau_n)\ (\mbox{in}\ \ C([0,T],\R)) \ a.s.,
$$
where $\tau=\inf\{t:\ z(t)=0\}\wedge T$. Then
\begin{equation}\label{len_cen}
\tau=\lim_{n\to\infty}\tau_n\ \mbox{in probability}.
\end{equation}
\end{lem}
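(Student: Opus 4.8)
The plan is to control the two inequalities $\tau \le \liminf \tau_n$ and $\tau \ge \limsup \tau_n$ separately, both in probability, using the uniform convergence $z_n(\cdot\wedge\tau_n)\to z(\cdot\wedge\tau)$ together with the nondegeneracy~\eqref{rest_char} of the quadratic variation below the hitting time. Fix $\eps>0$. For the easier direction, suppose $\tau_n \le \tau-\eps$ on a set of non-vanishing probability along a subsequence. Since $z_n(\tau_n)=0$ whenever $\tau_n<T$, and $z_n(\cdot\wedge\tau_n)\to z(\cdot\wedge\tau)$ uniformly, we get $z(\tau_n\wedge\tau)=0$ in the limit; but on $\{\tau_n\le\tau-\eps\}$ this forces $z$ to vanish at a time strictly before $\tau$, contradicting the definition $\tau=\inf\{t: z(t)=0\}\wedge T$ (one must be slightly careful when $\tau=T$, handled by the truncation $\wedge\, T$ throughout). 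Passing to the limit and using that this holds for every $\eps>0$ yields $\liminf_n \tau_n \ge \tau$ in probability.

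The other direction is where the hypothesis~\eqref{rest_char} does the work. Suppose, for contradiction, that with non-vanishing probability along a subsequence $\tau_n \ge \tau+\eps$. On this event $z_n$ does not hit $0$ on $[0,\tau+\eps]$, so by~\eqref{rest_char} its quadratic variation satisfies $[z_n]_{\tau+\eps} - [z_n]_{\tau} \ge p\eps$. I would then run the following comparison: since $z_n(\cdot\wedge\tau_n)$ converges uniformly to the continuous process $z(\cdot\wedge\tau)$, and $z(\tau)=0$ (when $\tau<T$), the paths $z_n$ are uniformly small near time $\tau$; more precisely, for any $\delta>0$, with high probability $\sup_{t\in[\tau,\tau+\eps]\cap[0,\tau_n]}|z_n(t)|$ can be controlled. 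But on $\{\tau_n\ge \tau+\eps\}$ the process $z_n$ restricted to $[\tau,\tau+\eps]$ is a continuous martingale started near $0$ whose quadratic variation has grown by at least $p\eps$; by the Dubins–Schwarz representation (time change to Brownian motion) such a martingale cannot stay within a small tube of width $o(1)$ on a time interval of fixed quadratic-variation length $p\eps$, except with probability tending to $0$. Quantitatively, if $|z_n(\tau)|\le\eta$ then $z_n$ on $[\tau,\tau_n]$ is a Brownian motion run for time at least $p\eps$ started from a point within $\eta$ of $0$, hence it hits $0$ before its clock reaches $p\eps$ with probability $\ge 1 - c\eta/\sqrt{p\eps}$; letting $\eta\to0$ via the uniform convergence forces $\tau_n<\tau+\eps$ with probability $\to1$. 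This contradicts the standing assumption and gives $\limsup_n\tau_n\le\tau$ in probability.

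Combining the two bounds yields $\tau_n\to\tau$ in probability, which is~\eqref{len_cen}.

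The main obstacle, and the step requiring the most care, is the second direction: extracting from~\eqref{rest_char} a genuine quantitative statement that a near-zero continuous martingale with guaranteed quadratic-variation growth must return to $0$ quickly. The clean route is the Dubins–Schwarz time change $z_n(\cdot) = \widetilde B_n([z_n]_\cdot)$ for a Brownian motion $\widetilde B_n$ (adapted to a time-changed filtration), after which the claim reduces to the elementary fact that Brownian motion started from a point at distance $\le\eta$ from the origin hits the origin within clock time $p\eps$ with probability at least $1-c\eta(p\eps)^{-1/2}$. One subtlety is that $[z_n]$ need not be strictly increasing, so the time change is only a right-continuous inverse and one should phrase the argument on the interval $[\tau\wedge\tau_n,\,\tau_n]$ where, on the bad event, a definite amount $p\eps$ of quadratic variation is accumulated; another is the behavior at the endpoint $T$, which is absorbed by the universal truncation $\wedge\,T$ in the definitions of $\tau$ and $\tau_n$. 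Everything else — the uniform-convergence bookkeeping and the $\eps,\delta,\eta\to0$ limits — is routine.
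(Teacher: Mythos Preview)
The paper does not prove this lemma in-text; it simply cites \cite[Lemma~2.10]{Konarovskyi:2014:arx}. So there is no in-paper argument to compare against, and I comment only on the soundness of your plan.

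The two-direction strategy is correct, and the first inequality in fact holds almost surely by the pointwise argument you sketch. For the second direction the Dubins--Schwarz idea is the right one, but there is a genuine gap in the step ``hence it hits $0$ before its clock reaches $p\eps$ with probability $\ge 1-c\eta/\sqrt{p\eps}$.'' This hitting estimate requires the strong Markov property of the time-changed Brownian motion $B_n$ at the starting time $[z_n]_\tau$, yet $\tau$ is defined through the \emph{limit} process $z$ and is therefore not a stopping time for $z_n$, nor is $[z_n]_\tau$ one for $B_n$. (The subtlety you do flag --- possible failure of strict monotonicity of $[z_n]$ --- is actually a non-issue: hypothesis~\eqref{rest_char} forces $[z_n]$ to be strictly increasing on $[0,\tau_n]$.)

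The repair is short: replace $\tau$ by the genuine $B_n$-stopping time $\rho_n^\eta:=\inf\{s:|B_n(s)|\le\eta\}$. On $\{\tau_n>\tau+\eps,\ |z_n(\tau)|\le\eta\}$ one has $|B_n([z_n]_\tau)|\le\eta$, hence $\rho_n^\eta\le[z_n]_\tau$; since $z_n$ does not vanish on $[0,\tau+\eps]$, the first zero $\rho_n$ of $B_n$ satisfies $\rho_n\ge[z_n]_{\tau+\eps}\ge[z_n]_\tau+p\eps\ge\rho_n^\eta+p\eps$. Thus the bad event is contained in $\{\rho_n>\rho_n^\eta+p\eps\}$, whose probability is now legitimately bounded by $c\eta/\sqrt{p\eps}$ via the strong Markov property at $\rho_n^\eta$. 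Combining with $\p\{|z_n(\tau)|>\eta,\ \tau_n>\tau+\eps\}\le\p\{D_n>\eta\}\to0$, where $D_n:=\sup_t|z_n(t\wedge\tau_n)-z(t\wedge\tau)|$, and sending $\eta\downarrow0$ completes the argument.
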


\begin{proof}
 The proof of this technical lemma can be found in~\cite[Lemma~2.10]{Konarovskyi:2014:arx}.
\end{proof}

\subsection{Some properties of the modified massive Arratia flow}\label{subsection_properties_of_y}

Let $y$ satisfy $(C1)-(C4)$. Then the following properties hold.
\begin{enumerate}
 \item[(P1)] For each $\alpha\in\left(0,\frac{3}{2}\right)$ the exists a constant $C$ such that for
all $u\in[0,1]$
$$
\E\frac{1}{m^{\alpha}(u,t)}\leq\frac{C}{\sqrt{t}},\quad t\in(0,T].
$$

 \item[(P2)] There exists a constant $C$ such that for all $u\in[0,1]$
$$
\E\int_0^t\frac{ds}{m(u,s)}\leq C\sqrt{t},\quad t\in[0,T].
$$

 \item[(P3)] There exists a constant $C$ such that for all $u\in[0,1]$
$$
\E (y(u,t)-u)^2\leq C\sqrt{t},\quad t\in[0,T].
$$

 \item[(P4)] Almost surely for all $t\in(0,T]$ the function $y(u,t),\ u\in[0,1]$, is a step function in
$D([0,1],\R)$ with a finite number of jumps. Moreover,
\begin{align}\label{f_coalescing_1}
\begin{split}
\p\{\forall u,v\in[0,1], t\in[0,T),\
y(u,t)&=y(v,t)\ \mbox{implies}\\ y(u,t+\cdot)&=y(v,t+\cdot)\}=1.
\end{split}
\end{align}
\end{enumerate}

\begin{rmk}\label{remark_coalescing}
According to~$(P4)$, hereafter we will suppose that for all $\omega\in\Omega$ and $t\in[0,T)$, $y(\cdot,t,\omega)$ is a step function in $D([0,1],\R)$ with a finite number of jumps. Also we assume that for all $u,v\in[0,1]$, $\omega\in\Omega$ and $t\in[0,T)$,  $y(u,t,\omega)=y(v,t,\omega)$ implies $y(u,t+\cdot,\omega)=y(v,t+\cdot,\omega)$.
\end{rmk}

Here $(P1)$ is the statement of Lemma~2.16~\cite{Konarovskyi:2014:arx}, $(P2)$ immediately follows from $(P1)$. Property $(P3)$ follows from $(P2)$ and $(C4)$. 

\begin{proof}[Proof of $(P4)$] 
We set
\begin{align*}
 \Omega'=&\{ \forall\ u,v\in[0,1]\cap\Q,\  t\in[0,T),\ y(u,t)=y(v,t)\\ 
 &\quad\mbox{implies}\ \ y(u,t+\cdot)=y(v,t+\cdot)\}\\
 \cap&\left\{\forall n\in\N\ \ \int_0^1\frac{du}{m(u,t_n)}<\infty,\ \mbox{where} \ t_n=\frac{1}{n}\wedge T\right\}.
\end{align*}
Since the set $[0,1]\cap\Q$ is countable, Proposition~2.3.4~\cite{Revuz:1999} and~$(P1)$ imply $\p\{\Omega'\}=1$.

Next we prove that 
\begin{equation}\label{f_coales_in_rat_case}
\begin{split}
&\mbox{for every}\ \omega\in\Omega',\ u\in[0,1]\cap\Q,\ v\in [0,u)\ \mbox{and}\ t\in[0,T)\\ 
&y(u,t,\omega)=y(v,t,\omega)\ \mbox{implies}\
y(u,t+\cdot,\omega)=y(v,t+\cdot,\omega).
\end{split}
\end{equation}
Indeed, if $y(u,t,\omega)=y(v,t,\omega)$, then by the monotonicity of $y(\cdot,t,\omega)$ (see $(C3)$), $y(u,t,\omega)=y(\widetilde{v},t,\omega)$ for all $\widetilde{v}\in[v,u)\cap\Q$. Hence $y(u,t+s,\omega)=y(\widetilde{v},t+s,\omega)$ for all $s\in[0,T-t]$. Using the right-continuity of $y(\cdot,t,\omega)$, we have $y(u,t+s,\omega)=y(v,t+s,\omega)$. This proves~\eqref{f_coales_in_rat_case}.

Let $\omega\in\Omega'$, $u\in[0,1]$, $v\in [0,u)$ and $t\in[0,T)$ be fixed and let $y(u,t,\omega)=y(v,t,\omega)$. If we show that there exists $\widetilde{u}\in[u,1]\cap\Q$ satisfying $y(\widetilde{u},t,\omega)=y(u,t,\omega)$, then~\eqref{f_coales_in_rat_case} will immediately imply~\eqref{f_coalescing_1}. To check this, we will use the fact that $\int_0^1\frac{d\widehat{u}}{m(\widehat{u},t_n,\omega)}$ is finite for all $n\in\N$. 

We fix some element $\widetilde{t}$ from $\{t_n,\ n\in\N\}$ such that $\widetilde{t}\leq t$ and assume that for all $\widetilde{u}\in(u,1]\cap\Q$\ \ $y(\widetilde{u},t,\omega)>y(u,t,\omega)$. Then the right-continuity of $y(\cdot,t,\omega)$ and its monotonicity imply that there exists a sequence $\{u_n\}_{n\geq 1}$ strongly decreasing to $u$ such that $y(u_{n+1},t,\omega)<y(u_n,t,\omega)$ for all $n\in\N$. Next, we set 
$$
\widetilde{u}_n=\inf\{u':\ y(u',t,\omega)=y(u_n,t,\omega)\},\quad n\in\N.
$$
Since $y(\cdot,t,\omega)$ is right-continuous, we have $y(\widetilde{u}_n,t,\omega)=y(u_n,t,\omega)$. Moreover, $\{\widetilde{u}_n\}_{n\geq 1}$ also strongly decreases to $u$ and $y(\widetilde{u}_{n+1},t,\omega)<y(\widetilde{u}_n,t,\omega)$ for all $n\in\N$. Consequently, for all $u'\in (\widetilde{u}_{n+1},\widetilde{u}_n)\cap\Q$ and $u''\in (\widetilde{u}_{n+2},\widetilde{u}_{n+1})\cap\Q$, $n\in\N$,  \ $y(u'',t,\omega)<y(u',t,\omega)$, by the monotonicity of $y(\cdot,t,\omega)$ and the choice of the sequence $\{\widetilde{u}_n\}_{n\geq 1}$. Thus, $y(u'',r,\omega)<y(u',r,\omega)$ also for each $r\in[0,t]$, since $u',\ u''$ are rational and $\omega$ was taken from $\Omega'$. 
Now we can estimate for every $\widehat{u}\in(\widetilde{u}_{n+1},\widetilde{u}_n)$, $n\in\N$,
$$
m(\widehat{u},\widetilde{t},\omega)=\leb\{u':\ \exists r\leq\widetilde{t} \ \ y(u',r,\omega)=y(\widehat{u},r,\omega)\}\leq \widetilde{u}_n-\widetilde{u}_{n+1}.
$$
So,
$$
\int_0^1\frac{d\widehat{u}}{m(\widehat{u},\widetilde{t},\omega)}\geq \sum_{n=1}^{\infty}\int_{\widetilde{u}_{n+1}}^{\widetilde{u}_n}\frac{d\widehat{u}}{m(\widehat{u},\widetilde{t},\omega)}\geq \sum_{n=1}^{\infty}\int_{\widetilde{u}_{n+1}}^{\widetilde{u}_n}\frac{d\widehat{u}}{\widetilde{u}_n-\widetilde{u}_{n+1}}=+\infty.
$$
But this contradicts the finiteness of the integral $\int_0^1\frac{d\widehat{u}}{m(\widehat{u},\widetilde{t},\omega)}$.
Consequently \eqref{f_coalescing_1} holds.

Next, let $t\in(0,T]$ be fixed. We are going to show that $y(\cdot,t)$ is a step function with a finite number of jumps a.s. Let $N(t)$ be a number of distinct points of $B_t=\{y(u,t),\ u\in[0,1]\}$ (that can be equal $+\infty$, if $B_t$ has infinitely many points). Then under~\eqref{f_coalescing_1} one can see that
\begin{equation}\label{f_N_def}
N(t)=\int_0^1\frac{du}{m(u,t)}\quad\mbox{a.s.}
\end{equation}
Indeed, let for fixed $\omega$, that we omit in the notation, $\pi(u,t)=\{v:\ y(v,t)=y(u,t)\}$, $u\in[0,1]$. Then by~\eqref{f_coalescing_1}, we have $m(u,t)=\leb(\pi(u,t))$. Consequently, \eqref{f_N_def} holds, if $N(t)$ is finite. Next, we suppose that $N(t)=+\infty$ and set $A_t=\{u:\ m(u,t)>0\}$. Note that $\int_0^1\frac{du}{m(u,t)}=+\infty$ is enough to check only for the case $\leb(A_t)=1$. So, assuming that $\leb(A_t)=1$ and using the fact that the number of distinct points of $B_t$ is infinite and $y(\cdot, t)$ is non-decreasing, it is easily seen that there exists a set of $\{u_k,\ k\in\N\}\subset [0,1]$ such that $y(u_k,t)\neq y(u_l,t)$ for all $k\neq l$ and $m(u_k,t)>0$, $k\in\N$. Now, we can estimate
$$
\int_0^1\frac{du}{m(u,t)}\geq\sum_{k=1}^n\int_{\pi(u_k)}\frac{du}{m(u,t)}=
\sum_{k=1}^n\int_{\pi(u_k)}\frac{du}{\leb(\pi(u_k))}=n.
$$
Letting $n\to\infty$, we get~\eqref{f_N_def}.

Thus, $N(t)$ must be finite a.s., by $(P1)$.

Also we would like to note here that~\eqref{f_coalescing_1} yields that almost surely for all $t\in(0,T]$ \ $y(\cdot,t)$ is a step function with a finite number of jumps.
\end{proof}

\section{Some elements of stochastic analysis for the system of heavy diffusion particles}\label{section_SA}

In this section $L_2$ will denote the space of square integrable measurable functions on $[0,1]$ with respect to Lebesgue
measure and $\|\cdot\|_{L_2}$ the usual norm in $L_2$.

\subsection{Definition of a stochastic integral for predictable $L_2$-valued processes}\label{stochastic_integral}

In this section we give a self-contained construction of the stochastic integral with respect to $y$ with emphasis on a simpler class of integrands then e.g. in Krylov-Rozovskii~\cite{Krylov:1981}.

As before let $\pr_ba$ denote the projection of $a$ onto the space of $\sigma(b)$-measurable functions from $L_2$.

\begin{lem}\label{lemma_y_is_martingale}
 For each $a\in L_2$ the process $(y(t),a)$, $t\in[0,T]$, is a continuous square integrable $(\F_t)$-martingale with the quadratic variation
 $$
 [(y(\cdot),a)]_t=\int_0^t\|\pr_{y(s)}a\|_{L_2}^2ds.
 $$
\end{lem}

\begin{proof}
 First note that $M(t):=(y(t),a)$, $t\in[0,T]$, is a continuous square integrable $(\F_t)$-martingale, since for each $u\in[0,1]$, $y(u,t)$, $t\in[0,T]$, is. Hence, it is enough to check that for all $0\leq s< t\leq T$
 $$
 \E\left[\left.(M(t)-M(s))^2\right|\F_s\right]=\E\left[\left.\int_s^t\|\pr_{y(r)}a\|_{L_2}^2dr\right|\F_s\right].
 $$
 Since for each $u,v\in[0,1]$ the joint quadratic variation of $y(u,\cdot)$ and $y(v,\cdot)$ equals $\int_s^t\frac{\I_{\{\tau_{u,v}\leq r\}}}{m(u,r)}dr$, we have
 \begin{align*}
 \E&\left[\left.(M(t)-M(s))^2\right|\F_s\right]\\
 &=\E\left(\left.\int_0^1\int_0^1a(u)a(v)(y(u,t)-y(u,s))(y(v,t)-y(v,s))dudv\right|\F_s\right)\\
 &=\E\left(\left.\int_0^1\int_0^1a(u)a(v)\left[\int_s^t\frac{\I_{\{\tau_{u,v}\leq r\}}}{m(u,r)}dr\right]dudv\right|\F_s\right)\\
 &=\E\left(\left.\int_s^t\left[\int_0^1\int_0^1a(u)a(v)\frac{\I_{\{\tau_{u,v}\leq r\}}}{m(u,r)}dudv\right]dr\right|\F_s\right).
 \end{align*}
 
 By Fubini's theorem, we obtain
 \begin{equation}\label{f_calcul_of_variation}
 \begin{split}
 \int_0^1\int_0^1&a(u)a(v)\frac{\I_{\{\tau_{u,v}\leq r\}}}{m(u,r)}dudv\\
 &=\int_0^1\frac{a(u)}{m(u,r)}\left(\int_0^1a(v)\I_{\{\tau_{u,v}\leq r\}}dv\right)du\\
 &=\int_0^1\frac{a(u)}{m(u,r)}\left(\int_{\pi(u,r)}a(v)dv\right)du,
 \end{split}
 \end{equation}
 where $\pi(u,t)=\{v:\ y(v,t)=y(u,t)\}$. Here we have used the equality $\pi(u,t)=\{v:\ \tau_{u,v}\leq r\}$, which follows from Remark~\ref{remark_coalescing}. We note that for each $\omega$ and $r$ the operator $\pr_{y(r,\omega)}$ is a usual projection (in $L_2(\lambda)$)  onto the subspace of all $\sigma(y(r,\omega))$-measurable functions and moreover
 $$
 \left(\pr_{y(r,\omega)}a\right)(u)=\frac{1}{m(u,r,\omega)}\int_{\pi(u,r,\omega)}a(v)dv
 $$
 because $y(r,\omega)$ is a step function according to Remark~\ref{remark_coalescing}. Consequently,
 the left hand side of~\eqref{f_calcul_of_variation} equals
 $$
 \int_0^1a(u)\left(\pr_{y(r)}a\right)(u)du=\|\pr_{y(r)}a\|_{L_2}^2.
 $$
 The lemma is proved.
\end{proof}

 By the polarization equality, the following corollary holds.

\begin{cor}\label{coroll_joint_variation}
 For each $a,b\in L_2$ we have
 $$
 [(y(\cdot),a),(y(\cdot),b)]_t=\int_0^t(\pr_{y(s)}a,\pr_{y(s)}b)ds.
 $$
\end{cor}

Let $\{e_n\}_{n\geq 1}$ be a fixed orthonormal basis in $L_2$, and $f(t)$, $t\in[0,T]$, be a predictable process taking values in $L_2$ with
\begin{equation}\label{f_norm_of_A2}
\E\int_0^T\|f(t)\|_{L_2}^2dt<\infty.
\end{equation}

We define the integral of $f$ with respect to $y$ as the series 
\begin{equation}\label{f_integral}
\int_0^t(f(s),dy(s))=\int_0^t\int_0^1f(u,s)dy(u,s)du:=\sum_{n=1}^{\infty}\int_0^t(f(s),e_n)d(y(s),e_n),
\end{equation}
which converges in $\M_2$ according to the following proposition, where $\M_2$ denotes the space of real valued continuous square integrable $(\F_t)$-martingales $M(t)$, $t\in[0,T]$, with the norm
$$
\|M\|_{\M_2}=\left(\E M^2(T)\right)^{\frac{1}{2}}.
$$

\begin{prp}
 The series~\eqref{f_integral} converges in $\M_2$ and is a continuous square integrable $(\F_t)$-martingale with the quadratic variation
 \begin{equation}\label{f_mart_characteristic}
 \left[\int_0^{\cdot}(f(s),dy(s))\right]_t=\int_0^t\|\pr_{y(s)} f(s)\|_{L_2}^2ds.
 \end{equation}
\end{prp}

\begin{proof}

 We set for each $n\in\N$
 $$
 S_n(t)=\sum_{k=1}^n\int_0^t(f(s),e_k)d(y(s),e_k),\quad t\in[0,T].
 $$
 Corollary~\ref{coroll_joint_variation} and a simple calculation yield that $S_n$ belongs to $\M_2$ and has the quadratic variation
 $$
 [S_n]_t=\int_0^t\left\|\pr_{y(s)}\sum_{k=1}^nf_k(s)e_k\right\|_{L_2}^2ds.
 $$
 Moreover, for each $1\leq n<p$
 \begin{align*}
 \|S_p-S_n\|_{\M_2}^2=\E(S_p(T)-S_n(T))^2\leq \E\int_0^T\left\|\sum_{k=n+1}^pf_k(t)e_k\right\|_{L_2}^2dt.
 \end{align*}
 By the dominated convergence theorem and assumption~\eqref{f_norm_of_A2}, $\|S_p-S_n\|_{\M_2}\to 0$ as $n,p\to\infty$. Thus, the sequence $\{S_n\}_{n\geq 1}$ converges in $\M_2$, by the completeness of $\M_2$. Next,~\eqref{f_mart_characteristic} follows from Lemma~B.11~\cite{Engelbert:2005}. The proposition is proved.
\end{proof}

\begin{rmk}
 Let $f(t)$, $t\in[0,T]$, be a predictable $L_2$-valued process such that
\begin{equation}\label{f_set_A_of_predictable_functions}
\int_0^T\|f(t)\|_{L_2}^2dt<\infty\quad\mbox{a.s.}
\end{equation}
Then using a localization sequence of stopping times, one can define the stochastic integral $\int_0^{\cdot}(f(s),dy(s))$ that is a continuous local square integrable $(\F_t)$-martingale with the quadratic variation given by~\eqref{f_mart_characteristic}.
\end{rmk}

\subsection{Girsanov's theorem}

In this section we construct a system of coalescing diffusion particles with drift that will be needed in Section~\ref{lower_bound} for the proof of the lower bound in LDP. So, fix
 a predictable $L_2$-valued process $\varphi$ satisfying~\eqref{f_set_A_of_predictable_functions} and consider on $(\Omega,\F)$ the new measure
$$
\p^{\varphi}(A)=\E\I_A\exp\left\{\int_0^T(\varphi(s),dy(s))-\frac{1}{2}\int_0^T\|\pr_{y(t)}
\varphi(t)\|_{L_2}^2dt\right\},\quad A\in\F.
$$
If
\begin{equation}\label{f_min_1_cond}
\E \exp\left\{\int_0^T(\varphi(s),dy(s))-\frac{1}{2}\int_0^T\|\pr_{y(t)} \varphi(t)\|_{L_2}^2dt\right\}=1,
\end{equation}
then $\p^{\varphi}$ is a probability measure.

\begin{thm}\label{theorem_Girsanov}
Let $\varphi$ satisfy~\eqref{f_min_1_cond}. Then the random element $\{y(u,t),\ u\in[0,1],\ t\in[0,T]\}$ in $D([0,1],C[0,T])$ satisfies the following
properties under $\p^{\varphi}$
\begin{enumerate}
\item[(D1)] for all $u\in[0,1]$ the process
$$
\eta(u,\cdot)=y(u,\cdot)-\int_0^{\cdot}\left(\pr_{y(s)}\varphi(s)\right)(u)ds
$$
is a continuous local square integrable $(\F_t)$-martingale;

\item[(D2)] for all $u\in[0,1]$, $y(u,0)=u$;

\item[(D3)] for all $u<v$ from $[0,1]$ and $t\in[0,T]$, $y(u,t)\leq y(v,t)$;

\item[(D4)]  for all $u,v\in[0,1]$ and $t\in[0,T]$,
$$
[\eta(u,\cdot),\eta(v,\cdot)]_t=\int_0^t\frac{\I_{\{\tau_{u,v}\leq s\}}ds}{m(u,s)}.
$$
\end{enumerate}
\end{thm}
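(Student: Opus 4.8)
The plan is to apply the classical Girsanov theorem in the form available for continuous square integrable martingales, using the stochastic integral $I_t(\varphi)=\int_0^t(\varphi(s),dy(s))$ constructed in the previous subsection as the relevant exponential martingale exponent. First I would fix $u\in[0,1]$ and observe that, by property $(C1)$, $y(u,\cdot)$ is a continuous square integrable $(\F_t)$-martingale, and that the one-dimensional processes $\bigl(\pr_{y(s)}\varphi(s)\bigr)(u)$ define (after integration in $s$) a process of finite variation; hence $\eta(u,\cdot)$ in $(D1)$ is well-defined and $\eta(u,0)=y(u,0)=u$, which gives $(D2)$, while $(D3)$ is immediate since the change of measure $\p\to\p^\varphi$ is absolutely continuous and the monotonicity $y(u,t)\le y(v,t)$ holds $\p$-a.s., hence $\p^\varphi$-a.s.

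The substance is $(D1)$ and $(D4)$. For $(D1)$ I would compute, for a single test function $a\in L_2$, the cross bracket under $\p$ between $y(u,\cdot)$ and the exponent $I(\varphi)$; by the corollaries to the proposition on $\A^2$ one has, for the "diagonal-test" choice, $[y(u,\cdot),I(\varphi)]_t=\int_0^t\bigl(\pr_{y(s)}\varphi(s)\bigr)(u)\,ds$, which is precisely the compensator that Girsanov's theorem subtracts. The rigorous route is: localize by the stopping times $T_N=\inf\{t:\ \int_0^t\|\pr_{y(s)}\varphi(s)\|_{L_2}^2ds\ge N\}\wedge T$ so that the stopped density is a genuine martingale, apply the standard Girsanov theorem (e.g. in the form of Theorem in \cite{Watanabe:1981:en}) on $(\Omega,\F,\p)$ with $(\F_t)$ to conclude that $\eta(u,\cdot\wedge T_N)$ is a continuous local $\p^\varphi$-martingale whose bracket is unchanged, then send $N\to\infty$ using \eqref{f_min_1_cond} to remove the localization. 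Since brackets are invariant under absolutely continuous changes of measure, $(D4)$ follows at once from $(C4)$ together with the identity $[\eta(u,\cdot),\eta(v,\cdot)]=[y(u,\cdot),y(v,\cdot)]$, which holds because $\eta$ and $y$ differ by a finite-variation term; the square integrability in $(D1)$ then comes from $(P3)$ exactly as in the proof of Theorem~\ref{theorem_stochastic_integral_for_simple_functions}, after noting $\E^{\p^\varphi}\int_0^1 y^2(u,t)\,du<\infty$ is inherited via the density.

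The main obstacle I anticipate is bookkeeping the infinite-dimensionality: the exponent $I(\varphi)$ is built from an $L_2$-valued integrator, so to invoke a one-dimensional Girsanov statement I must first reduce to the scalar martingales $y(u,\cdot)$ (for fixed $u$) and justify that $[y(u,\cdot),I(\varphi)]_t$ equals $\int_0^t(\pr_{y(s)}\varphi(s))(u)\,ds$ — this requires re-running, against the single coordinate $y(u,\cdot)$, the stopping-time decomposition by the jump times of $N(t)$ used in the proof of Theorem~\ref{theorem_stochastic_integral_for_simple_functions}, together with the explicit projection formula $\bigl(\pr_{y(\tau)}\varphi\bigr)(u)=\frac{1}{m(u,\tau)}\int_{\pi(u,\tau)}\varphi(v)\,dv$ and the constancy of $m(u,\cdot)$ and $\pi(u,\cdot)$ on the intervals $[\tau_l^k,\tau_{l+1}^k)$. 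Once this cross-bracket identity is in hand, the rest is the routine localization-and-limit argument, and I would present it compactly by citing the standard continuous Girsanov theorem rather than reproving it.
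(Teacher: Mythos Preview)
Your proposal is correct and follows the same overall strategy as the paper: $(D2)$ and $(D3)$ are trivial from absolute continuity, and $(D1)$, $(D4)$ reduce to the standard continuous-martingale Girsanov theorem once one establishes the cross-bracket identity $[y(u,\cdot),I(\varphi)]_t=\int_0^t(\pr_{y(s)}\varphi(s))(u)\,ds$. The only difference lies in how this identity is obtained. You propose to redo the stopping-time decomposition from the proof of Theorem~\ref{theorem_stochastic_integral_for_simple_functions} against the single coordinate $y(u,\cdot)$; the paper instead first proves a representation lemma (Lemma~\ref{lemma_representation_of_y(u)}) showing that $y(u,\cdot)$ is itself a stochastic integral with respect to the flow, namely $y(u,t)=u+\int_0^t\bigl(\tfrac{\I_{\pi(u,s-)}}{m(u,s-)},dy(s)\bigr)$, after which the cross-bracket follows immediately from the already-established polarization formula for brackets of two such integrals (Corollary~\ref{corollary_characteristic_f_y}). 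The paper's packaging is slightly more economical and isolates the representation of $y(u,\cdot)$ as an independent fact, whereas your direct computation would work equally well but essentially reproves that lemma inside the bracket calculation. Your localization and the remark that brackets are invariant under absolutely continuous change of measure are exactly what is needed; the paper simply cites Theorem~5.4.1 of Ikeda--Watanabe without spelling this out.
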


Note that $(D2)$ and $(D3)$ immediately follows from the absolute continuity of $\p^{\varphi}$. To prove $(D1)$ and $(D4)$ we state an auxiliary lemma.

\begin{lem}\label{lemma_representation_of_y(u)}
For each $u\in[0,1]$
$$
y(u,t)=u+\int_0^t\int_0^1\frac{\I_{\pi(u,s-)}(q)}{m(u,s-)}dy(q,s)dq.
$$
\end{lem}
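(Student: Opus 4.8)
The plan is to show that both sides of the claimed identity are continuous square integrable martingales with the same initial value and the same quadratic variation process, and then invoke uniqueness. Fix $u\in[0,1]$. The right-hand side is, by definition, $u + I_t(f^u)$ where $f^u(q,s) = \frac{\I_{\pi(u,s-)}(q)}{m(u,s-)}$ is the (weakly left-continuous, hence predictable) $L_2$-valued process $s\mapsto \frac{1}{m(u,s-)}\I_{\pi(u,s-)}$. This process lies in $\A$: indeed $\|f^u(s)\|_{L_2}^2 = \frac{1}{m(u,s-)^2}\,\leb(\pi(u,s-)) = \frac{1}{m(u,s-)}$, so $\int_0^T\|f^u(s)\|_{L_2}^2\,ds = \int_0^T \frac{ds}{m(u,s-)} = \int_0^T\frac{ds}{m(u,s)}$, which is finite a.s.\ and has finite expectation bounded by $C\sqrt T$ by property $(P2)$; in particular $f^u\in\A^2$. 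Hence $I_\cdot(f^u)$ is a well-defined continuous square integrable $(\F_t)$-martingale. On the other hand $y(u,\cdot)$ is a continuous square integrable martingale by $(C1)$ with $y(u,0)=u$ by $(C2)$, so $y(u,\cdot)-u$ is a continuous square integrable martingale starting at $0$. Thus it remains only to match quadratic variations, since two continuous $L^2$-martingales with the same initial value and the same quadratic variation coincide.

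First I would compute the projection $\pr_{y(s)} f^u(s)$. Since $f^u(s)$ is already $\frac{1}{m(u,s-)}\I_{\pi(u,s-)}$ and the partition $\{\pi(v,s-),\ v\in[0,1]\}$ is (for $s$ in the relevant range) the partition generated by $y(s-)$, the function $f^u(s)$ is already $\sigma(y(s-))$-measurable; more to the point, since by $(P4)$ and the adopted convention $\pi(u,s-)=\pi(u,s)$ for all but countably many $s$, we get $\pr_{y(s)} f^u(s) = f^u(s)$ for a.e.\ $s$. Therefore, by~\eqref{f_mart_characteristic},
\[
\left[\,u + I_\cdot(f^u)\,\right]_t = \int_0^t \|\pr_{y(s)} f^u(s)\|_{L_2}^2\,ds = \int_0^t \frac{ds}{m(u,s)}.
\]
On the other side, by $(C4)$ with $v=u$ (so $\tau_{u,u}=0$),
\[
\left[\,y(u,\cdot)\,\right]_t = \int_0^t \frac{\I_{\{\tau_{u,u}\le s\}}\,ds}{m(u,s)} = \int_0^t \frac{ds}{m(u,s)}.
\]
The two quadratic variations agree, so $y(u,\cdot) - u - I_\cdot(f^u)$ is a continuous martingale, vanishing at $0$, with zero quadratic variation, hence identically zero a.s. This gives the claimed representation.

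I expect the main technical point to be the identification $\pr_{y(s)} f^u(s) = f^u(s)$ for a.e.\ $s$ — i.e.\ the fact that integrating against $\frac{1}{m(u,s-)}\I_{\pi(u,s-)}$ "sees" only the coalesced cluster containing $u$ at time $s-$, and that this indicator is measurable with respect to $\sigma(y(s))$ up to the countably many jump times of $N(\cdot)$. This uses the explicit form $\big(\pr_{y(\tau)}\varphi\big)(u) = \frac{1}{m(u,\tau)}\int_{\pi(u,\tau)}\varphi(v)\,dv$ recorded in the proof of Theorem~\ref{theorem_stochastic_integral_for_simple_functions}, together with property $(P4)$ guaranteeing that $m(u,\cdot)$ is a right-continuous step process with finitely many jumps on $(0,T]$, so that replacing $s-$ by $s$ inside a Lebesgue integral in $s$ is harmless. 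The remaining ingredients — membership of $f^u$ in $\A$ via $(P2)$, the martingale property and quadratic-variation formula via~\eqref{f_mart_characteristic}, and the uniqueness of a continuous $L^2$-martingale given its bracket — are all either already established in the excerpt or entirely standard.
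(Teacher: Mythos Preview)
There is a genuine gap in your argument. The key claim ``two continuous $L^2$-martingales with the same initial value and the same quadratic variation coincide'' is false: a Brownian motion $W$ and its negative $-W$ both start at $0$ and have bracket $t$, yet are distinct. What you actually need is that the \emph{difference} $D := y(u,\cdot) - u - I_\cdot(f^u)$ has zero quadratic variation, and
\[
[D]_t = [y(u,\cdot)]_t - 2\,[y(u,\cdot), I_\cdot(f^u)]_t + [I_\cdot(f^u)]_t,
\]
so matching $[y(u,\cdot)]_t = [I_\cdot(f^u)]_t$ is not enough; you must also show the cross variation $[y(u,\cdot), I_\cdot(f^u)]_t = \int_0^t\frac{ds}{m(u,s)}$. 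You have not done this, and it is not automatic.

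Worse, the natural tool for that cross variation is Corollary~\ref{corollary_characteristic_f_y}, which in the paper is \emph{derived from} the present lemma, so invoking it would be circular. To obtain the cross variation independently you would have to argue directly from $(C4)$ and the construction of the stochastic integral, which is essentially the same work as the paper's direct proof. The paper bypasses brackets altogether: it introduces the stopping times $\sigma_k=\inf\{s:N(s)\le k\}\wedge t$, observes that on each $[\sigma_{k+1},\sigma_k)$ the integrand $f^u$ equals the constant $\frac{1}{m(u,\sigma_{k+1})}\I_{\pi(u,\sigma_{k+1})}$, evaluates the integral on that slab as
\[
\frac{1}{m(u,\sigma_{k+1})}\int_{\pi(u,\sigma_{k+1})}\bigl(y(q,\sigma_k)-y(q,\sigma_{k+1})\bigr)\,dq = y(u,\sigma_k)-y(u,\sigma_{k+1})
\]
(since $y(q,\cdot)=y(u,\cdot)$ for $q\in\pi(u,\sigma_{k+1})$), and telescopes the sum to $y(u,t)-u$. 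Your preliminary steps (predictability of $f^u$, membership in $\A^2$ via $(P2)$) are correct and match the paper, but the core of your argument does not go through.
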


\begin{proof}
Setting $f(q,s)=\frac{\I_{\pi(u,s-)}(q)}{m(u,s-)}$ and using $(P2)$, we
have
\begin{align*}
\E\int_0^T\|f(s)\|_{L_2}^2ds&=\E\int_0^T\int_0^1\frac{\I_{\pi(u,s-)}(q)}{m^2(u,s-)}dsdq\\
&=\E\int_0^T\left(\frac{1}{m^2(u,s)}\int_{\pi(u,s)}dq\right)ds\\
&=\E\int_0^T\frac{1}{m(u,s)}ds<\infty.
\end{align*}

Next, put
\begin{align*}
\sigma_0&=t,\\
\sigma_k&=\inf\{s:\ N(s)\leq k\}\wedge t,\quad k\in\N,
\end{align*}
where $N(t)=\int_0^1\frac{1}{m(u,t)}$, $t\in[0,T]$, denotes a number of distinct points in $\{y(u,t),\ u\in[0,1]\}$ and is
an $(\F_t)$-adapted c\`{a}dl\'{a}g process, and note that $\sigma_k$ is an $(\F_t)$-stopping time, $\sigma_k\geq\sigma_{k+1}$. So,
\begin{align*}
\int_0^t\int_0^1 &f(q,s)dy(q,s)dq=\sum_{k=0}^{\infty}\int_{\sigma_{k+1}}^{\sigma_k}\int_0^1
\frac{\I_{\pi(u,s-)}(q)}{m(u,s-)}dy(q,s)dq\\
&=\sum_{k=0}^{\infty}\int_0^1
\frac{\I_{\pi(u,\sigma_{k+1})}(q)}{m(u,\sigma_{k+1})}(y(q,\sigma_k)-y(q,\sigma_{k+1}))dq\\
&=\sum_{k=0}^{\infty}\int_{\pi(u,\sigma_{k+1})}dq\frac{1}{m(u,\sigma_{k+1})}(y(u,\sigma_k)-y(u,
\sigma_{k+1}))\\
&=\sum_{k=0}^{\infty}(y(u,\sigma_k)-y(u,\sigma_{k+1}))=y(u,t)-u.
\end{align*}
The lemma is proved.
\end{proof}

\begin{cor}\label{corollary_characteristic_f_y}
For each predictable $L_2$-valued process $f$ satisfying~\eqref{f_set_A_of_predictable_functions} and $u\in[0,1]$
$$
\left[\int_0^{\cdot}(f(s),dy(s)),y(u,\cdot)\right]_t=\int_0^t\left(\pr_{y(s)}
f(s)\right)(u)ds,\quad t\in[0,T].
$$
\end{cor}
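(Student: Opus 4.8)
The goal is to show that for each $f\in\A$ and $u\in[0,1]$, the cross-variation $\left[\int_0^{\cdot}(f(s),dy(s)),y(u,\cdot)\right]_t$ equals $\int_0^t\left(\pr_{y(s)}f(s)\right)(u)ds$.

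The plan is to reduce everything to Lemma~\ref{lemma_representation_of_y(u)}, which represents $y(u,\cdot)$ itself as a stochastic integral against the flow $y$. First I would write $g(q,s) := \frac{\I_{\pi(u,s-)}(q)}{m(u,s-)}$, so that by Lemma~\ref{lemma_representation_of_y(u)} we have $y(u,t) - u = \int_0^t (g(s), dy(s))$, and this $g$ belongs to $\A$ (indeed, the proof of Lemma~\ref{lemma_representation_of_y(u)} shows $\E\int_0^T\|g(s)\|_{L_2}^2 ds < \infty$, so $g\in\A^2\subset\A$). Then the cross-variation in question is $\left[\int_0^{\cdot}(f(s),dy(s)),\int_0^{\cdot}(g(s),dy(s))\right]_t$, and by the corollary giving the polarization formula for the bracket of two stochastic integrals (the second Corollary after the Proposition establishing~\eqref{f_mart_characteristic}), this equals $\int_0^t\left(\pr_{y(s)}f(s),\pr_{y(s)}g(s)\right)ds$.

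It then remains to identify the integrand. The key observation is that $g(\cdot,s)=\frac{\I_{\pi(u,s-)}(\cdot)}{m(u,s-)}$ is already $\sigma(y(s))$-measurable (it is constant on the cluster $\pi(u,s-)=\pi(u,s)$ for a.e.\ $s$, since $N$ has at most countably many jumps), hence $\pr_{y(s)}g(s) = g(s)$ up to a Lebesgue-null set of times. Therefore $\left(\pr_{y(s)}f(s),\pr_{y(s)}g(s)\right) = \left(\pr_{y(s)}f(s), g(s)\right) = \int_0^1\left(\pr_{y(s)}f(s)\right)(q)\,\frac{\I_{\pi(u,s)}(q)}{m(u,s)}\,dq$. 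Since the explicit formula for the projection reads $\left(\pr_{y(s)}f(s)\right)(q)=\frac{1}{m(q,s)}\int_{\pi(q,s)}f(v,s)\,dv$, this function is constant on $\pi(u,s)$, equal to its value at $q=u$; and $\frac{1}{m(u,s)}\int_{\pi(u,s)}dq = 1$. Hence the integral collapses to $\left(\pr_{y(s)}f(s)\right)(u)$, giving the claimed identity $\int_0^t\left(\pr_{y(s)}f(s)\right)(u)\,ds$.

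The main obstacle is the measure-theoretic bookkeeping around the left limits $\pi(u,s-)$ versus $\pi(u,s)$ and the justification that $g(s)$ is genuinely $\sigma(y(s))$-measurable for a.e.\ $s$: one must use that $s\mapsto N(s)$ is càdlàg with finitely many jumps on $[0,T]$ a.s.\ (property $(P4)$), so $\pi(u,s-)=\pi(u,s)$ for all but finitely many $s$, and that the bracket process is insensitive to changing the integrand on a Lebesgue-null set of times. Everything else is a routine application of the bilinearity of the quadratic covariation together with the already-established identity~\eqref{f_mart_characteristic} and the explicit form of the projection operator.
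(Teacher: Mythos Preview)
Your proof is correct and follows exactly the approach the paper intends: the corollary appears immediately after Lemma~\ref{lemma_representation_of_y(u)} with no explicit proof, and your argument---write $y(u,\cdot)$ as a stochastic integral via that lemma, apply the polarization formula for the bracket of two such integrals, then simplify using that $g(\cdot,s)=\frac{\I_{\pi(u,s-)}(\cdot)}{m(u,s-)}$ is already $\sigma(y(s))$-measurable for a.e.\ $s$---is precisely the intended derivation. The only detail to add is that the cross-variation corollary is stated for $f,g\in\A^2$ while here $f\in\A$, but this is covered by the paper's remark (just before Lemma~\ref{lemma_representation_of_y(u)}) that the quadratic variation formula~\eqref{f_mart_characteristic} extends to $\A$ by localization, whence so does its polarized version.
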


\begin{proof}[Proof of Theorem~\ref{theorem_Girsanov}]
The proof of the assertion follows from Girsanov's theorem (see
Theorem~5.4.1~\cite{Watanabe:1981:en}) and Corollary~\ref{corollary_characteristic_f_y}.
\end{proof}

\begin{rmk}
For predictable $L_2$-valued functions $f$ satisfying~\eqref{f_norm_of_A2} (resp.~\eqref{f_set_A_of_predictable_functions}) we can construct the stochastic integral with respect to
the flow $\{y(u,t),\ u\in[0,1],\ t\in[0,T]\}$ satisfying conditions $(D1)-(D4)$ in the same way as
in the case of conditions $(C1)-(C4)$. Moreover,
$$
\int_0^t(f(s),dy(s))=\int_0^t(f(s),\pr_{y(s)}\varphi(s))ds+\int_0^t(f(s),d\eta(s))
$$
and $\int_0^{\cdot}(f(s),d\eta(s))$ is a continuous square integrable (resp. local square
integrable) $(\F_t)$-martingale with the quadratic variation
$$
\left[\int_0^{\cdot}(f(s),d\eta(s))\right]_t=\int_0^t\|\pr_{y(s)} f(s)\|_{L_2}^2ds.
$$
\end{rmk}

\section{Large deviation principle for the modified massive Arratia flow}\label{section_LDP}

\subsection{Exponential tightness}

In this section we prove exponential tightness of the modified massive Arratia flow. In order to prove this we will use ``exponentially fast'' version of Jakubowski's
tightness criterion (see Theorem~A.1~\cite{MR1607396}).
So, let $\{y(u,t), u\in[0,1], t\in[0,T]\}$
be a random element in $D([0,1],C[0,T])$ satisfying $(C1)-(C4)$ and $\rho(du)=\kappa(u)du$, where $\kappa$ is given by~\eqref{f_kappa}.

Since $y(\cdot,t)$, $t\in[0,T]$, is a continuous $L_2(\rho)$-valued process, we will establish exponential tightness of
$\{y^{\eps}\}_{\eps\in(0,1]}$ in the space $C([0,T],L_2(\rho))$, where $y^{\eps}(t)=y(\cdot,\eps t),\
t\in[0,T]$.

By Theorem~A.1~\cite{MR1607396}, $\{y^{\eps}\}_{\eps\in(0,1]}$ is exponential tight in
$C([0,T],L_2(\rho))$, i.e. for every $M>0$ there exists a compact $K_M\subset C([0,T],L_2(\rho))$
such that
 $$
 \varlimsup_{\eps\to 0}\eps\ln\p\{y^{\eps}\notin K_M\}\leq -M,
 $$
if and only if
\begin{enumerate}
 \item[(E1)] for every $M>0$ there exists a compact $K_M\subset L_2(\rho)$ such that
\begin{equation}\label{f_tightness_comp_in_L}
 \varlimsup_{\eps\to 0}\eps\ln\p\{\exists t\in[0,T]:\ y^{\eps}(t)\notin K_M\}\leq -M;
\end{equation}

 \item[(E2)] for every $h\in L_2(\rho)$ the sequence $\{(h,y^{\eps}(\cdot))_{L_2(\rho)}\}_{\eps\in(0,1]}$ is
exponentially tight in $C([0,T],\R)$, where $(\cdot,\cdot)_{L_2(\rho)}$ denotes the inner product in
$L_2(\rho)$.
\end{enumerate}

Since $y(u,t),\ u\in[0,1]$, is non-decreasing for all $t\in[0,T]$, to find the compact $K_M\subset
L_2(\rho)$ satisfying~\eqref{f_tightness_comp_in_L} it suffices to control the behavior of processes
$y(u,t),\ t\in[0,T]$, for $u$ close to 0 or 1. Note that the diffusion rate of the process
$y(u,t),\ t\in[0,T]$, tends to infinity as $t\to 0$. But
\begin{equation}\label{f_estim_of_trajectories}
 M_*(u,t)\leq y(u,t)\leq M^*(u,t),\quad t\in[0,T],
\end{equation}
where
\begin{align*}
 M^*(u,t)&=\frac{1}{1-u}\int_u^1y(v,t)dv,\\
 M_*(u,t)&=\frac{1}{u}\int_0^uy(v,t)dv
\end{align*}
and
\begin{align}\label{f_charakteristics_of_M}
\frac{d[ M^*(u,\cdot)]_t}{dt}\leq \frac{1}{1-u},\quad
\frac{d[ M_*(u,\cdot)]_t}{dt}\leq \frac{1}{u}.
\end{align}
The latter inequalities follow from the simple relations
\begin{align*}
 M^*(u,t)&=\frac{1}{1-u}\int_0^t (\I_{[u,1]},dy(s)),\\
 M_*(u,t)&=\frac{1}{u}\int_0^t (\I_{[0,u]},dy(s)),
\end{align*}
where these sort of integrals ware defined in Section~\ref{stochastic_integral},
and the formula for the quadratic variation of the stochastic integral~\eqref{f_mart_characteristic}. Indeed,
\begin{align*}
 \frac{d[ M^*(u,\cdot)]_t}{dt}=\frac{1}{(1-u)^2}\|\pr_{y(t)}\I_{[u,1]}\|_{L_2}^2\leq\frac{1}{(1-u)^2}\|\I_{[u,1]}\|_{L_2}^2=\frac{1}{1-u}.
\end{align*}
The inequality $\frac{d[ M_*(u,\cdot)]_t}{dt}\leq \frac{1}{u}$ can be obtained in the same way.

In Lemma~\ref{lemma_E1} we will use inequalities~\eqref{f_estim_of_trajectories} in order to find a compact $K_M\subset L_2(\rho)$ for each $M>0$ such that~\eqref{f_tightness_comp_in_L} holds.

Since $y(u,t),\ u\in[0,1]$, belongs to $D([0,1],\R)$ and is a non-decreasing function, we will often work
with non-decreasing functions
$$
D^{\uparrow}=\{h\in D([0,1],\R):\ h\ \mbox{is non-decreasing}\}.
$$

\begin{lem}\label{lemma_compact_in_L_2}
 The set
$$
A_M=\left\{h\in D^{\uparrow}:\ h(1/n)\geq -Mn\ \mbox{and}\ h(1-1/n)\leq Mn,\ n\in\N\right\}.
$$
is compact in $L_2(\rho)$ for all positive $M$.
\end{lem}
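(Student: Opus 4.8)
The plan is to show $A_M$ is compact in $L_2(\mu)$ by establishing two things: (i) $A_M$ is closed in $L_2(\mu)$, and (ii) $A_M$ is relatively compact, for which I would use a combination of a uniform integrability (tightness-at-the-endpoints) bound coming from the explicit constraints $h(1/n)\geq -Mn$, $h(1-1/n)\leq Mn$, together with Helly's selection theorem for monotone functions. The key structural observation is that a non-decreasing function $h\in D^{\uparrow}$ is determined by its values, and the pointwise bounds in the definition of $A_M$ control exactly the quantity $\int_0^1 h^2(u)\,\kappa(u)\,du$ near the two endpoints $u=0$ and $u=1$, which is where $\kappa(u)=u^\beta$ resp.\ $(1-u)^\beta$ degenerates but is still integrable against a function blowing up no faster than $1/u$ (note $\beta>1$).

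First I would verify the \emph{uniform integrability near the endpoints}: if $h\in A_M$, then for $u\in[1/(n+1),1/n]$ monotonicity gives $h(u)\le h(1-1/m)\le Mm$ for any fixed $m$ (so $h$ is bounded above on compacta away from $1$) and $h(u)\ge h(1/(n+1))\ge -M(n+1)$. Hence on the dyadic-type blocks $[1/(n+1),1/n]$ one has $|h(u)|\le C(M)\,n$ for $u$ small, which combined with $\kappa(u)=u^\beta\le n^{-\beta}$ on that block and block length $\le 1/n^2$ yields $\int_{[1/(n+1),1/n]} h^2\kappa\,du \lesssim M^2 n^2 \cdot n^{-\beta}\cdot n^{-2} = M^2 n^{-\beta}$, which is summable since $\beta>1$. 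The same estimate holds symmetrically near $u=1$. Therefore $\sup_{h\in A_M}\int_{[0,\delta]\cup[1-\delta,1]} h^2\kappa\,du \to 0$ as $\delta\to 0$, uniformly in $h\in A_M$; in particular $A_M\subset L_2(\mu)$ and the family is ``tight'' toward the endpoints.

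Next I would extract convergent subsequences. Given $(h_k)\subset A_M$, apply Helly's selection theorem on each compact interval $[\delta,1-\delta]$ (the $h_k$ are monotone and, by the bounds above, uniformly bounded there) and diagonalize over $\delta\downarrow 0$ to obtain a subsequence converging pointwise on $(0,1)$ to some non-decreasing $h$, which one checks lies in $D^{\uparrow}$ and satisfies the defining inequalities of $A_M$ by passing to the limit in $h_k(1/n)\ge -Mn$ etc.\ (using that $1/n,1-1/n$ are continuity points up to countably many exceptions, or using one-sided limits). Pointwise convergence on $[\delta,1-\delta]$ plus the uniform bound $|h_k(u)|\le C(M,\delta)$ there gives $L_2([\delta,1-\delta],\mu)$-convergence by dominated convergence; combined with the uniform endpoint smallness from the previous step, a standard $\eps/3$ argument upgrades this to convergence in $L_2(\mu)$. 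Hence every sequence in $A_M$ has a subsequence converging in $L_2(\mu)$ to an element of $A_M$, so $A_M$ is (sequentially, hence) compact in the metric space $L_2(\mu)$.

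The main obstacle is bookkeeping the interaction between the linear-in-$n$ growth allowed for $h$ near the endpoints and the polynomial vanishing rate $\kappa(u)\sim u^\beta$: one must check that the constraint exponents match so that $\int h^2\kappa$ converges uniformly, which is precisely where $\beta>1$ is used and which forces the particular scaling $n$ (rather than, say, $n^2$) in the definition of $A_M$. The rest is a routine application of Helly's theorem plus a truncation/diagonalization argument, so I would not belabor those computations.
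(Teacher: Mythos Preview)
Your argument is correct and follows essentially the same route as the paper: the same block estimate $\int_{[1/(n+1),1/n]} h^2\kappa\,du \lesssim M^2 n^{-\beta}$ (summable since $\beta>1$) to show $A_M\subset L_2(\mu)$, then Helly's selection for monotone functions to extract a pointwise-a.e.\ convergent subsequence. The only cosmetic difference is in the last step: you use the uniform endpoint smallness in an $\eps/3$ truncation argument, whereas the paper packages the same information as a single dominating function $f\in L_2(\mu)$ (namely $f(u)=Mn$ on each block) and then invokes a.e.\ convergence plus convergence of norms to conclude $L_2(\mu)$-convergence; both are standard and equivalent here.
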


\begin{proof}
First we prove that $A_M\subset L_2(\rho)$. Let $h\in A_M$. Without loss of generality let
$h$ be positive on $[1/2,1]$ and negative on $[0,1/2]$. Then
\begin{align*}
 \int_0^1h^2(u)\rho(du)&=\int_0^1h^2(u)\kappa(u)du\leq
C\sum_{n=2}^{\infty}\frac{M^2n^2}{n^{\beta}}\left(\frac{1}{n-1}-\frac{1}{n}\right)<C_1
\end{align*}
and $C_1$ is independent of $h$.

Next, take a sequence $\{h_k\}_{k\geq 1}$ in $A_M$. Since $\{h_k\}_{k\geq 1}\subset D^{\uparrow}$,
there exists a subsequence $\{h_{k'}\}$ that convergences to $h\in D^{\uparrow}$ for all $u\in[0,1]$ except possibly countably many points, and hence
also $\rho$-a.e. Since
$|h_k(u)|\leq f(u),\ u\in[0,1]$, where
$$
f(u)=\begin{cases}
       Mn,\quad u\in[1-1/(n-1),1-1/n),\\
       Mn,\quad u\in[1/n,1/(n-1)),
      \end{cases}
$$
and $f\in L_2(\rho)$, $\|h_{k'}\|_{ L_2(\rho)}\to\|h\|_{ L_2(\rho)}$, by the dominated convergence
theorem. Consequently, this and Lemma~1.32~\cite{Kallenberg:2002} imply $h_{k'}\to h$ in
$L_2(\rho)$. The lemma is proved.
\end{proof}

\begin{lem}\label{lemma_E1}
The family of processes $\{y^{\eps}\}_{\eps\in(0,1]}$ satisfies $(E1)$, i.e. for every $M>0$ there
exists a compact $K_M\subset L_2(\rho)$, such that~\eqref{f_tightness_comp_in_L} holds.
\end{lem}

\begin{proof}
 By Lemma~\ref{lemma_compact_in_L_2}, we can take $K_M=A_L$ and show that for some
$L>0$~\eqref{f_tightness_comp_in_L} holds. So,
\begin{align*}
 \p\{\exists t\in[0,T]:\ y^{\eps}(t)\notin A_L\}&\leq\sum_{n=1}^{\infty}\p\{\exists t\in[0,T]:\
y^{\eps}(1/n,t)< -Ln\}\\
&+\sum_{n=1}^{\infty}\p\{\exists t\in[0,T]:\ y^{\eps}(1-1/n,t)> Ln\}.
\end{align*}

Using~\eqref{f_estim_of_trajectories} and \eqref{f_charakteristics_of_M}, we estimate for fixed $n\in\N$
\begin{align*}
\p\{\exists t\in[0,T]&:\ y(1-1/n,\eps t)> Ln\}=\p\left\{\sup_{t\in[0,T]}y(1-1/n,\eps t)>
Ln\right\}\\
&\leq\p\left\{\sup_{t\in[0,T]}M^*(1-1/n,\eps t)> Ln\right\}\\
&\leq\p\left\{\sup_{t\in[0,T]}(w_n(n\eps
t)+1)> Ln\right\}\\
&=\frac{2}{\sqrt{2\pi n\eps T}}\int_{Ln-1}^{\infty}e^{-\frac{x^2}{2n\eps T}}dx
\leq C\exp\left\{-\frac{L^2n}{2\eps T}+\frac{L}{\eps T}\right\},
\end{align*}
where $C$ is independent of $\eps,\ L$ and $n$.

Similarly
$$
\p\{\exists t\in[0,T]:\ y(1/n,\eps t)< -Ln\}\leq C\exp\left\{-\frac{L^2n}{2\eps T}+\frac{L}{\eps
T}\right\}.
$$

Now, for $M>0$ we can estimate
\begin{align*}
  \varlimsup_{\eps\to 0}\eps\ln\p\{\exists t\in[0,T]&:\ y^{\eps}(t)\notin A_L\}\\
&\leq\varlimsup_{\eps\to 0}\eps\ln\left(2C\sum_{n=1}^{\infty}\exp\left\{-\frac{L^2n}{2\eps
T}+\frac{L}{\eps T}\right\}\right)\\
&\leq-\frac{L^2}{2T}+\frac{L}{T}<-M,
\end{align*}
where $L$ is taken large enough. The lemma is proved.
\end{proof}

\begin{lem}
The sequence of processes $\{y^{\eps}\}_{\eps\in(0,1]}$ satisfies $(E2)$, i.e. for every $h\in L_2(\rho)$
the sequence $\{(h,y^{\eps}(\cdot))_{L_2(\rho)}\}_{\eps\in(0,1]}$ is exponentially tight in $C([0,T],\R)$.
\end{lem}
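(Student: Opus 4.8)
The plan is to fix $h\in L_2(\mu)$ and show that the family of real processes $Z^{\eps}(t):=(h,y^{\eps}(t))_{L_2(\mu)}$, $t\in[0,T]$, is exponentially tight in $C([0,T],\R)$. Since $\mu(du)=\kappa(u)du$ with $\kappa\leq 1$ bounded, one has $\|g\|_{L_2(\mu)}\leq\|g\|_{L_2(\lambda)}$ for every $g$, so $h\cdot\kappa\in L_1([0,1])\cap L_2(\lambda)$; writing $g:=h\kappa$ we can present $Z^{\eps}$ as the $L_2(\lambda)$-pairing $(g,y^{\eps}(t))_{L_2(\lambda)}$. First I would reduce to the case $g\in L_\infty([0,1])$ (or even $g=\I_{[a,b]}$): by the stochastic-integral theory of Section~\ref{section_SA}, $t\mapsto(g,y(t))_{L_2(\lambda)}=J(g)(t)$ is a continuous local (in fact, under $(P3)$, square integrable) martingale with quadratic variation $[J(g)]_t=\int_0^t\|\pr_{y(s)}g\|_{L_2(\lambda)}^2\,ds$, and since $\pr_{y(s)}$ is an orthogonal projection in $L_2(\lambda)$ we have the crucial deterministic bound
$$
[J(g)]_t\leq t\,\|g\|_{L_2(\lambda)}^2 .
$$
Rescaling time, $Z^{\eps}(t)=J(g)(\eps t)$ is a continuous martingale with $[Z^{\eps}]_t=[J(g)]_{\eps t}\leq \eps t\|g\|_{L_2(\lambda)}^2$.

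The standard route to exponential tightness in $C([0,T],\R)$ of a family of continuous processes is: (i) exponential tightness of the initial values, which is trivial here because $Z^{\eps}(0)=(g,\id)_{L_2(\lambda)}$ is a fixed constant; and (ii) an exponential modulus-of-continuity estimate of Arzel\`a--Ascoli type, namely that for every $\rho>0$
$$
\lim_{\delta\to0}\ \varlimsup_{\eps\to0}\ \eps\ln \p\Bigl\{\sup_{|s-t|\leq\delta}|Z^{\eps}(t)-Z^{\eps}(s)|>\rho\Bigr\}=-\infty .
$$
To prove (ii) I would use the exponential martingale (Bernstein) inequality: for a continuous martingale $N$ with $[N]_T\leq c$ a.s.\ one has $\p\{\sup_{[0,T]}|N|>\rho\}\leq 2\exp(-\rho^2/(2c))$. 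Splitting $[0,T]$ into $\lceil T/\delta\rceil$ subintervals and applying this to the martingale increments of $Z^{\eps}$ on each subinterval (where $[Z^{\eps}]$ increases by at most $\eps\delta\|g\|_{L_2(\lambda)}^2$), a union bound gives
$$
\p\Bigl\{\sup_{|s-t|\leq\delta}|Z^{\eps}(t)-Z^{\eps}(s)|>\rho\Bigr\}\leq C(T,\delta)\exp\!\Bigl(-\frac{\rho^2}{2\eps\,\delta\,\|g\|_{L_2(\lambda)}^2}\Bigr),
$$
so that $\eps\ln(\cdot)\to -\rho^2/(2\delta\|g\|_{L_2(\lambda)}^2)$, which tends to $-\infty$ as $\delta\to0$. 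Together with (i) and the fact that a set of equicontinuous, equibounded functions is relatively compact in $C([0,T],\R)$, this produces for each $M$ a compact set $K_M\subset C([0,T],\R)$ with $\varlimsup_{\eps\to0}\eps\ln\p\{Z^{\eps}\notin K_M\}\leq -M$, which is exactly $(E2)$. For the general $g\in L_2(\lambda)$ one approximates by bounded functions $g_k\to g$ in $L_2(\lambda)$; since $[J(g-g_k)]_T\leq T\|g-g_k\|_{L_2(\lambda)}^2\to0$, the corresponding martingales converge exponentially fast uniformly on $[0,T]$ (again by Bernstein's inequality), and exponential tightness is preserved under such uniform exponential approximation.

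The step I expect to be the main (though mild) obstacle is the passage from the deterministic quadratic-variation bound to the uniform-in-time exponential estimate with the correct $1/\eps$ scaling: one must be careful that the constant $C(T,\delta)$ coming from the union bound is subexponential in $1/\eps$ (it is in fact $\eps$-independent), so that it disappears after multiplying by $\eps$ and taking logarithms, and that the chaining over subintervals does not degrade the rate. Everything else — the reduction to bounded $g$, the triviality of $(E1)$'s analogue for scalar initial data, and the approximation argument — is routine given the stochastic calculus already established and the boundedness of $\kappa$.
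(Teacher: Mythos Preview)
Your proof is correct and rests on the same core observation as the paper: writing $g=h\kappa\in L_2(\lambda)$, the process $Z^{\eps}(t)=(g,y(\eps t))_{L_2(\lambda)}$ is a continuous martingale whose quadratic variation is bounded deterministically by $\eps t\|g\|_{L_2(\lambda)}^2$, because $\pr_{y(s)}$ is an orthogonal projection. The paper then invokes a ready-made criterion (Theorem~3 of Schied~\cite{Schied:1995}) which asks only for a uniform bound of the form
\[
\E\exp\Bigl\{\tfrac{\gamma}{\eps(t-s)^{1/2}}\,|Z^{\eps}(t)-Z^{\eps}(s)|\Bigr\}\leq k^{1/\eps},
\]
and obtains this in one line from the exponential martingale and Novikov's condition. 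You instead build the compact sets by hand via the Arzel\`a--Ascoli route, using Bernstein's inequality on subintervals and a union bound. The two arguments are equivalent in content; yours is a bit more self-contained, the paper's is shorter because the chaining is hidden inside Schied's criterion. Note also that your preliminary reduction to bounded $g$ and the final approximation step are unnecessary: the quadratic-variation bound $[J(g)]_t\leq t\|g\|_{L_2(\lambda)}^2$ already holds for every $g\in L_2(\lambda)$, so the Bernstein estimate applies directly without truncation.
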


\begin{proof}
To prove the lemma, we will use Corollary~7.1~\cite{MR1483609} (see also Theorem~3~\cite{Schied:1995}). It is enough to show that for each $h$ there
exist positive constants $\alpha$, $\gamma$ and $k$ such that for all $s,t\in[0,T]$, $s<t$
$$
\E\exp\left\{\frac{\gamma}{\eps
(t-s)^{\alpha}}|M_h(\eps t)-M_h(\eps s)|\right\}\leq k^{1/\eps},\quad
\forall \eps\leq\eps_0,
$$
where $M_h(t)=(h,y(t))_{L_2(\rho)},\ t\in[0,T]$.

Using~\eqref{f_mart_characteristic}, for
\begin{align*}
M_h(t)=\int_0^1h(u)y(u,t)\kappa(u)du=\int_0^t(h\kappa,dy(s))_{L_2(\lambda)}
\end{align*}
we have
$$
[ M_h]_t=\int_0^t\|\pr_{y(s)}(h\kappa)\|_{L_2(\lambda)}^2ds\leq
\int_0^t\|h\kappa\|_{L_2(\lambda) }^2ds\leq\|h\|_{L_2(\rho) }^2t.
$$
The inequality for the quadratic variation of $M_h$ and Novikov's theorem imply
\begin{equation}\label{f_expon_mart}
\E\exp\left\{\beta\int_s^t(h\kappa,dy(r))_{L_2(\lambda)}-\frac{\beta^2}{2}\int_s^t\|\pr_{y(r)}
(h\kappa)\|_{L_2(\lambda)}^2dr\right\}=1.
\end{equation}
So, for $\delta>0$
\begin{align*}
\E\exp&\left\{\delta|M_h(\eps t)-M_h(\eps s)|\right\}\leq \E\exp\left\{\delta(M_h(\eps t)-M_h(\eps
s))\right\}\\
&+\E\exp\left\{\delta(M_h(\eps s)-M_h(\eps t))\right\}\\
&=\E\exp\left\{\delta\int_{\eps s}^{\eps t}(h\kappa,dy(r))_{L_2(\lambda)}-\frac{\delta^2}{2}\int_{\eps s}^{\eps t}\|\pr_{y(r)}
(h\kappa)\|_{L_2(\lambda)}^2dr\right.\\
&\left.+\frac{\delta^2}{2}\int_{\eps s}^{\eps t}\|\pr_{y(r)}
(h\kappa)\|_{L_2(\lambda)}^2dr\right\}+\E\exp\left\{\delta(M_{-h}(\eps t)-M_{-h}(\eps
s))\right\}\\
&\leq 2\E\exp\left\{\frac{\eps\delta^2}{2}\|h\|_{L_2(\rho) }^2(t-s)\right\}.
\end{align*}
Taking $\delta=\frac{\sqrt{2}}{\eps(t-s)^{1/2}\|h\|_{L_2(\rho) }}$, we have
$$
\E\exp\left\{\frac{\sqrt{2}|M_h(\eps t)-M_h(\eps s)|}{\eps \|h\|_{L_2(\rho)}
(t-s)^{1/2}}\right\}\leq 2e^{1/\eps}\leq (2e)^{1/\eps}.
$$
This finishes the proof of the lemma.
\end{proof}

From the two previous lemmas we obtain the exponential tightness of $\{y^{\eps}\}_{\eps\in(0,1]}$.
\begin{prp}\label{proposition_exponential_tightness}
 The sequence $\{y^{\eps}\}_{\eps\in(0,1]}$ is exponentially tight in $C([0,T],$ $L_2(\rho))$.
\end{prp}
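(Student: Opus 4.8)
The plan is to deduce Proposition~\ref{proposition_exponential_tightness} directly from the abstract criterion of Theorem~1~\cite{Schied:1995}, which reduces exponential tightness of $\{y^{\eps}\}_{\eps>0}$ in $C([0,T],L_2(\mu))$ to the two conditions $(E1)$ and $(E2)$. Since both of these have just been verified in the two preceding lemmas, the proof is a one-line assembly: cite Theorem~1~\cite{Schied:1995}, recall that $L_2(\mu)$ is a separable Hilbert space so the criterion applies, and conclude.

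In more detail, I would first recall the statement of the criterion: a family of continuous $L_2(\mu)$-valued processes is exponentially tight in $C([0,T],L_2(\mu))$ if and only if it satisfies the ``compact containment'' condition $(E1)$ — for every $M>0$ there is a compact $K_M\subset L_2(\mu)$ with $\varlimsup_{\eps\to 0}\eps\ln\p\{\exists t\in[0,T]:\ y^{\eps}(t)\notin K_M\}\leq -M$ — together with the ``one-dimensional projection'' condition $(E2)$ that for each $h\in L_2(\mu)$ the real-valued process $\{(h,y^{\eps}(t))_{L_2(\mu)},\ t\in[0,T]\}$ is exponentially tight in $C([0,T],\R)$. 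The first of the two lemmas above establishes $(E1)$ with $K_M=A_L$ for $L$ chosen large enough, using the sandwich bound~\eqref{f_estim_of_trajectories}, the quadratic-variation control~\eqref{f_charakteristics_of_M}, and the Gaussian tail of a Wiener process; the second lemma establishes $(E2)$ via an exponential-moment estimate of increments of $M_h$ obtained from Novikov's theorem and the bound $[M_h]_t\leq\|h\|_{L_2(\mu)}^2t$, which meets the hypothesis of Theorem~3~\cite{Schied:1995}. Hence both hypotheses of Theorem~1~\cite{Schied:1995} hold.

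There is no real obstacle remaining: the genuine work was done inside the two lemmas, and the proposition is merely their conjunction fed into the cited equivalence. The only point requiring a word of care is purely bookkeeping — making sure that the time-rescaling convention $y^{\eps}(t)=y(\cdot,\eps t)$ is the one under which $(E1)$ and $(E2)$ were proved (it is), and that the compacts $K_M$ in $(E1)$ are compact in $L_2(\mu)$ and not merely in $L_2(\lambda)$ (this is exactly the content of Lemma~\ref{lemma_compact_in_L_2}). With these checked, the proof is immediate.

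\begin{proof}
By Theorem~1~\cite{Schied:1995}, since $L_2(\mu)$ is a separable Hilbert space and each $y^{\eps}$ is a continuous $L_2(\mu)$-valued process, the family $\{y^{\eps}\}_{\eps>0}$ is exponentially tight in $C([0,T],L_2(\mu))$ if and only if conditions $(E1)$ and $(E2)$ hold. These were established in the two previous lemmas, respectively. Hence $\{y^{\eps}\}_{\eps>0}$ is exponentially tight in $C([0,T],L_2(\mu))$.
\end{proof}
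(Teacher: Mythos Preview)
Your proposal is correct and follows exactly the paper's approach: the proposition is stated immediately after the two lemmas verifying $(E1)$ and $(E2)$, and the paper's proof is the single line ``From the two previous lemmas we obtain the exponential tightness of $\{y^{\eps}\}_{\eps>0}$,'' invoking Theorem~1~\cite{Schied:1995} as set up at the start of the subsection. Your write-up simply makes this assembly explicit.
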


\subsection{Proof of Theorem~\ref{theorem_LDP}}

We set
$$
L_2^{\uparrow}(\rho)=\{g\in L_2(\rho):\exists \widetilde{g}\in D^{\uparrow},\ g=\widetilde{g},\
\rho\mbox{-a.e.}\}
$$
and 
$$
C_{\id}([0,T],L_2^{\uparrow}(\rho))=\{\varphi\in C([0,T],L_2^{\uparrow}(\rho)):\
\varphi(0)=\id\}.
$$

\begin{rmk}
Since the set $C_{\id}([0,T],L_2^{\uparrow}(\rho))$ is closed in $C([0,T],L_2(\rho))$, it is enough
to state LDP for $\{y^{\eps}\}_{\eps\in(0,1]}$ in the metric space $C_{\id}([0,T],L_2^{\uparrow}(\rho))$.
\end{rmk}

Due to the exponential tightness, for the upper bound it is enough to consider compact sets. According to~\cite{Dembo:2010} (see Theorem~4.1.11), for this it is enough to show that $\Ii$ is a lower-semicontinuous function and
\begin{enumerate}
 \item[(B1)] weak upper bound:
 $$
 \lim_{r\to 0}\varlimsup_{\eps\to 0}\eps\ln\p\{y^{\eps}\in B_r(\varphi)\}\leq-\Ii(\varphi),
 $$
 where $\varphi\in C_{\id}([0,T],L_2^{\uparrow}(\rho))$ and $B_r(\varphi)$ is the open ball in $C_{\id}([0,T],L_2^{\uparrow}(\rho))$ with center
 $\varphi$ and radius  $r$;

 \item[(B2)] lower bound: for every open set $A\subseteq C_{\id}([0,T],L_2^{\uparrow}(\rho))$
 $$
 \varliminf_{\eps\to 0}\eps\ln\p\{y^{\eps}\in A\}\geq-\inf_{\varphi\in A}\Ii(\varphi).
 $$
\end{enumerate}

To prove the upper and lower bounds we will follow the idea in~\cite{Donati_Martin:2008,Donati_Martin:2004} based on exponential change of measure and the Girsanov transformation.

\subsubsection{The upper bound}\label{subsect_The upper bound}
First we check $(B1)$. We set
$$
H=\{h\in C([0,T],L_2(\rho^{-1})):\ \dot{h}\in L_2([0,T],L_2(\rho^{-1}))\},
$$
where $\rho^{-1}(du)=\frac{1}{\kappa(u)}du$.

For $h\in H$ let
\begin{equation}\label{f_martingale_M}
M_t^{\eps,h}=\exp\left\{\frac{1}{\eps}\left[\int_0^t(h(s),dy^{\eps}(s))_{L_2(\lambda)}
-\frac{1}{2}\int_0^t\|\pr_{y^{\eps}(s)}h(s)\|^2_{L_2(\lambda)}ds\right]\right\}.
\end{equation}
By Novikov`s theorem, $M_t^{\eps,h}$, $t\in[0,T]$, is a martingale with $\E M_t^{\eps,h}=1$ (see also~\eqref{f_expon_mart}).
By an integration by parts (Lemma~\ref{lemma_integration_by_parts}), we can write
$$
M_T^{\eps,h}=\exp\left\{\frac{1}{\eps}F(y^{\eps},h)\right\},
$$
where
\begin{align*}
F(\varphi,h)&=(h(T),\varphi(T))_{L_2(\lambda)}-(h(0),\id)_{L_2(\lambda)}\\
&-\int_0^T(\dot{h}(s),\varphi(s))_{L_2(\lambda)}ds\\
&-\frac{1}{2}\int_0^T\|\pr_{\varphi(s)}h(s)\|^2_{
L_2(\lambda)}ds, \quad\varphi\in C_{\id}([0,T],L_2^{\uparrow}(\rho)).
\end{align*}

For $\varphi\in C_{\id}([0,T],L_2^{\uparrow}(\rho))$ we have
\begin{align*}
 \p\{y^{\eps}\in B_r(\varphi)\}&=\E\left[\I_{\{y^{\eps}\in
B_r(\varphi)\}}\frac{M_T^{\eps,h}}{M_T^{\eps,h}}\right]\\
&\leq\exp\left\{-\frac{1}{\eps}\inf_{\psi\in B_r(\varphi)}F(\psi,h)\right\}\E M_T^{\eps,h}\\
&=\exp\left\{-\frac{1}{\eps}\inf_{\psi\in B_r(\varphi)}F(\psi,h)\right\}.
\end{align*}

Using the inequality $\|\pr_{\varphi(s)}h(s)\|^2_{L_2(\lambda)}\leq\|h(s)\|^2_{L_2(\lambda)}$, we obtain
\begin{align*}
 \varlimsup_{\eps\to 0}\eps\ln\p\{y^{\eps}\in B_r(\varphi)\}\leq -\inf_{\psi\in
B_r(\varphi)}F(\psi,h)\leq -\inf_{\psi\in B_r(\varphi)}\Phi(\psi,h),
\end{align*}
where
\begin{align*}
\Phi(\varphi,h)&=(h(T),\varphi(T))_{L_2(\lambda)}-(h(0),\id)_{L_2(\lambda)}\\
&-\int_0^T(\dot{h}(s),\varphi(s))_{L_2(\lambda)}ds\\
&-\frac{1}{2}\int_0^T\|h(s)\|^2_{L_2(\lambda)
}ds, \quad\varphi\in C_{\id}([0,T],L_2^{\uparrow}(\rho)).
\end{align*}
Since the map $\Phi(\varphi,h)$, $\varphi\in C_{\id}([0,T],L_2^{\uparrow}(\rho))$, is continuous for fixed $h$,
$$
\lim_{r\to 0}\varlimsup_{\eps\to 0}\eps\ln\p\{y^{\eps}\in B_r(\varphi)\}\leq -\Phi(\varphi,h).
$$
Minimizing in $h\in H$, we obtain
$$
\lim_{r\to 0}\varlimsup_{\eps\to 0}\eps\ln\p\{y^{\eps}\in B_r(\varphi)\}\leq
-\sup_{h\in H}\Phi(\varphi,h).
$$

Now $(B1)$ will follow from the following.

\begin{prp}\label{proposition_sup_of_F}
 For each $\varphi\in C_{\id}([0,T],L_2^{\uparrow}(\rho))$
$$
\sup_{h\in H}\Phi(\varphi,h)=\Ii(\varphi).
$$
\end{prp}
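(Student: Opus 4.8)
The plan is to read the identity as a Legendre duality and to exploit that $\Phi(\varphi,\cdot)$ is quadratic in $h$. First I would rewrite $\Phi(\varphi,h)=L(\varphi,h)-\tfrac12\|h\|_{L_2([0,T],L_2(\lambda))}^2$, where $L(\varphi,h):=(h(T),\varphi(T))_{L_2(\lambda)}-(h(0),\id)_{L_2(\lambda)}-\int_0^T(\dot h(s),\varphi(s))_{L_2(\lambda)}\,ds$ is linear in $h$; all these pairings are finite because $h(t),\dot h(t)\in L_2(\mu^{-1})$, $\varphi(t)\in L_2(\mu)$, and $L_2(\mu^{-1})$ is the dual of $L_2(\mu)$ for the unweighted pairing (Cauchy--Schwarz with the weight $\kappa$ --- this is exactly why $H$ is built on $L_2(\mu^{-1})$). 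Since $H$ is a vector space and $\Phi(\varphi,c\,h)=c\,L(\varphi,h)-\tfrac{c^2}{2}\|h\|_{L_2([0,T],L_2(\lambda))}^2$, optimizing over the scalar $c\in\R$ first gives $\sup_{h\in H}\Phi(\varphi,h)=\tfrac12 N(\varphi)^2$, where $N(\varphi):=\sup\{|L(\varphi,h)|:h\in H,\ \|h\|_{L_2([0,T],L_2(\lambda))}\le 1\}\in[0,\infty]$, so it remains to prove $\Ii(\varphi)=\tfrac12 N(\varphi)^2$. I would also record two density facts: $L_2(\mu^{-1})$ is dense in $L_2(\lambda)$ (truncate: $f\I_{[\delta,1-\delta]}\in L_2(\mu^{-1})$ since $\kappa\ge\delta^{\beta}$ there, and $f\I_{[\delta,1-\delta]}\to f$ in $L_2(\lambda)$), and consequently $H$ --- which contains all finite sums $\sum_i\theta_i(\cdot)a_i$ with $\theta_i\in C^1([0,T])$ and $a_i\in L_2(\mu^{-1})$ --- is dense in $L_2([0,T],L_2(\lambda))$.

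Then I would split into two cases. If $N(\varphi)<\infty$, then $L(\varphi,\cdot)$ is a bounded functional on the dense subspace $H$ of $L_2([0,T],L_2(\lambda))$, so by Riesz representation there is $g\in L_2([0,T],L_2(\lambda))$ with $\|g\|_{L_2([0,T],L_2(\lambda))}=N(\varphi)$ and $L(\varphi,h)=\int_0^T(h(s),g(s))_{L_2(\lambda)}\,ds$ for every $h\in H$. Feeding in the test functions $h(s)=\theta(s)a$ with $\theta\in C_c^{\infty}((0,T))$ and $a\in L_2(\mu^{-1})$ (so $h(0)=h(T)=0$ and $\dot h(s)=\theta'(s)a$) shows that the continuous scalar function $s\mapsto(a,\varphi(s))_{L_2(\lambda)}$ has weak derivative $s\mapsto(a,g(s))_{L_2(\lambda)}\in L_1(0,T)$; integrating and using $\varphi(0)=\id$ yields $(a,\varphi(t))_{L_2(\lambda)}=(a,\id+\int_0^t g(s)\,ds)_{L_2(\lambda)}$ for all $t$ and all $a\in L_2(\mu^{-1})$, and since $L_2(\mu^{-1})$ separates the points of $L_2(\mu)$ (again by truncation) this forces $\varphi(t)=\id+\int_0^t g(s)\,ds$. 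Hence $\varphi\in\h$ with $\dot\varphi=g$, so $\Ii(\varphi)=\tfrac12\|g\|_{L_2([0,T],L_2(\lambda))}^2=\tfrac12 N(\varphi)^2$. If instead $N(\varphi)=\infty$, then already $\sup_{h\in H}\Phi(\varphi,h)=\infty$, and $\Ii(\varphi)=\infty$ as well: were $\varphi\in\h$ with $\dot\varphi\in L_2([0,T],L_2(\lambda))$, the Bochner product rule applied to the absolutely continuous $L_2(\lambda)$-valued maps $h$ and $\varphi$, together with $\varphi(0)=\id$, would give $L(\varphi,h)=\int_0^T(h(s),\dot\varphi(s))_{L_2(\lambda)}\,ds$, whence $|L(\varphi,h)|\le\|h\|_{L_2([0,T],L_2(\lambda))}\|\dot\varphi\|_{L_2([0,T],L_2(\lambda))}$ and $N(\varphi)<\infty$, a contradiction.

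The scaling reduction, the Riesz step, the weak-derivative identification, and the Bochner integration by parts are all routine once set up. The one point that needs genuine care --- and the main obstacle I anticipate --- is the bookkeeping among $L_2(\mu)\subset L_2(\lambda)$, $L_2(\mu^{-1})\subset L_2(\lambda)$ and the duality $L_2(\mu)^{*}=L_2(\mu^{-1})$ under the unweighted inner product: one must verify that every term of $\Phi$ is finite and, above all, carry out the identification $\varphi=\id+\int\dot\varphi$ inside $L_2(\mu)$ (where $\varphi$ a priori takes its values) before one is entitled to conclude $\varphi(t)\in L_2(\lambda)$ and hence $\varphi\in\h$.
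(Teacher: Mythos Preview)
Your argument is correct and follows essentially the same route as the paper's proof: the scaling reduction to the operator norm of the linear part $L(\varphi,\cdot)=G(\varphi,\cdot)$, the density of $H$ in $L_2([0,T],L_2(\lambda))$, Riesz representation, and the identification of the Riesz representative as $\dot\varphi$ are exactly the steps the paper uses (the latter two packaged as Lemmas~\ref{lemma_density_of_H} and~\ref{lemma_int_by_parts_is_non_rand_case}). Your explicit discussion of the $L_2(\mu)/L_2(\mu^{-1})$ duality and the test-function identification $\varphi(t)=\id+\int_0^t g$ is precisely the content of Lemma~\ref{lemma_int_by_parts_is_non_rand_case}, so the only difference is that you carry out inline what the paper defers to the appendix.
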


\begin{proof}
First we prove the assertion of the proposition for $\varphi$ satisfying
$$
J(\varphi):=\sup_{h\in H}\Phi(\varphi,h)<\infty.
$$
Replacing $h$ by $\theta h$, $\theta\in\R$, and maximizing the expression $\Phi(\varphi,\theta h)$  over $\theta$ for fixed $h$, we get
\begin{equation}\label{f_the_norm_of_G}
J(\varphi)=\frac{1}{2}\sup_{h\in H}\frac{G^2(\varphi,h)}{\int_0^T\|h(s)\|^2_{L_2(\lambda)
}ds}<\infty.
\end{equation}
By~\eqref{f_the_norm_of_G} and Lemma~\ref{lemma_density_of_H} the linear map
$$
G_{\varphi}:h\to G(\varphi,h)
$$
can be extended to the space $L_2([0,T],$ $L_2(\lambda))$ and consequently,
\begin{equation}\label{f_represent_of_G}
G(\varphi,h)=\int_0^T(k_{\varphi}(s),h(s))ds
\end{equation}
for some function $k_{\varphi}\in L_2([0,T],$ $L_2(\lambda))$. Thus, by the integration by parts formula for Bochner integrals, $\varphi$ is absolutely continuous and
$\dot{\varphi}=k_{\varphi}$. Applying the Cauchy-Schwarz inequality to~\eqref{f_represent_of_G} we
get
\begin{align*}
G(\varphi,h)^2&\leq 2\Ii(\varphi)\int_0^T\|h(s)\|^2_{L_2(\lambda)}ds.
\end{align*}
This yields $J(\varphi)\leq \Ii(\varphi)$
and since $H$ is dense in $L_2([0,T],L_2(\lambda))$, we get the equality $J(\varphi)=\Ii(\varphi)$.

If $\Ii(\varphi)<\infty$, then $\varphi$ is absolutely continuous and $k_{\varphi}=\dot{\varphi}$
in~\eqref{f_represent_of_G}. So, $J(\varphi)\leq \Ii(\varphi)<\infty$ and consequently we have
$J(\varphi)=\Ii(\varphi)$. This completes the proof of the proposition.
\end{proof}

\begin{cor}
 $\Ii$ is lower-semicontinuous as supremum of continuous functions.
\end{cor}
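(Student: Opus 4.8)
The plan is to obtain the lower semicontinuity of $\Ii$ at once from the variational representation just established in Proposition~\ref{proposition_sup_of_F}, namely
\[ \Ii(\varphi)=\sup_{h\in H}\Phi(\varphi,h),\qquad \varphi\in C_{\id}([0,T],L_2^{\uparrow}(\mu)), \]
together with the elementary fact that a pointwise supremum of an arbitrary family of lower semicontinuous functions is lower semicontinuous: the strict upper level set $\{\,\sup_{h\in H}\Phi(\cdot,h)>c\,\}=\bigcup_{h\in H}\{\Phi(\cdot,h)>c\}$ is a union of open sets, hence open, for every $c\in\R$. Consequently it suffices to check that for each fixed $h\in H$ the functional $\varphi\mapsto\Phi(\varphi,h)$ is continuous on $C_{\id}([0,T],L_2^{\uparrow}(\mu))$, a point already recorded in the discussion preceding Proposition~\ref{proposition_sup_of_F}; I would simply make that observation explicit.

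To do so, I would inspect the four terms in the definition of $\Phi$: the term $\frac12\int_0^T\|h(s)\|^2_{L_2(\lambda)}\,ds$ does not depend on $\varphi$, the term $(h(0),\id)_{L_2(\lambda)}$ is a constant, and the remaining two terms $(h(T),\varphi(T))_{L_2(\lambda)}$ and $\int_0^T(\dot h(s),\varphi(s))_{L_2(\lambda)}\,ds$ are linear in $\varphi$. The only point requiring care is that $\Phi$ is written with $L_2(\lambda)$ pairings while the ambient topology is that of $C([0,T],L_2(\mu))$; this is handled by the weight $\kappa$, since splitting $h\varphi=(h\kappa^{-1/2})(\varphi\kappa^{1/2})$ and applying the Cauchy--Schwarz inequality yields $|(h,\varphi)_{L_2(\lambda)}|\le\|h\|_{L_2(\mu^{-1})}\|\varphi\|_{L_2(\mu)}$, and $h\in H$ precisely means $s\mapsto h(s)\in C([0,T],L_2(\mu^{-1}))$ and $s\mapsto\dot h(s)\in L_2([0,T],L_2(\mu^{-1}))$. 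Hence $\varphi\mapsto(h(T),\varphi(T))_{L_2(\lambda)}$ is continuous for uniform convergence of $\varphi(\cdot)$ in $L_2(\mu)$, and dominated convergence turns such convergence into convergence of $\int_0^T(\dot h(s),\varphi(s))_{L_2(\lambda)}\,ds$; so each $\Phi(\cdot,h)$ is continuous and $\Ii$ is lower semicontinuous on $C_{\id}([0,T],L_2^{\uparrow}(\mu))$.

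Finally, since $\Ii\equiv+\infty$ off the set $\h$, and $\h\subseteq C_{\id}([0,T],L_2^{\uparrow}(\mu))$ (for $\varphi\in\h$ each $\varphi(t)\in L_2(\lambda)\cap D^{\uparrow}$, and $\kappa\le1$ forces $\mu\le\lambda$, hence $L_2(\lambda)\subseteq L_2(\mu)$ and $\varphi(t)\in L_2^{\uparrow}(\mu)$, with $t\mapsto\varphi(t)$ still continuous in $L_2(\mu)$ and $\varphi(0)=\id$), the conclusion extends verbatim to the whole space $C([0,T],L_2(\mu))$ used in Theorem~\ref{theorem_LDP}. I do not expect a genuine obstacle here: the corollary is essentially immediate from Proposition~\ref{proposition_sup_of_F}, and the only thing to be vigilant about is the $L_2(\lambda)$ versus $L_2(\mu)$ bookkeeping described above, which the definition of $H$ is tailored to absorb.
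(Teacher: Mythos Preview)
Your argument is correct and matches the paper's approach exactly: the corollary is stated without proof in the paper because it follows immediately from the identity $\Ii(\varphi)=\sup_{h\in H}\Phi(\varphi,h)$ of Proposition~\ref{proposition_sup_of_F} together with the continuity of each $\Phi(\cdot,h)$, the latter already asserted in the text preceding the proposition. Your explicit verification of that continuity via the weighted Cauchy--Schwarz bound $|(h,\varphi)_{L_2(\lambda)}|\le\|h\|_{L_2(\mu^{-1})}\|\varphi\|_{L_2(\mu)}$ is exactly the reason the space $H$ is defined with the $\mu^{-1}$ weight, so you have simply made explicit what the paper leaves implicit.
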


\subsubsection{The lower bound}\label{lower_bound}
In order to obtain the lower bound $(B2)$, it is enough to find a subset $\RR\subset
C_{\id}([0,T],L_2^{\uparrow}(\rho))$
such that for each $\varphi\in\RR$
\begin{equation}\label{f_lower_bound}
\lim_{r\to 0}\varliminf_{\eps\to 0}\eps\ln\p\{y^{\eps}\in B_r(\varphi)\}\geq -\Ii(\varphi),
\end{equation}
and prove that for each $\varphi$ satisfying $\Ii(\varphi)<\infty$, there exists a sequence
$\{\varphi_n\}\subset\RR$ such that $\varphi_n\to\varphi$ in $C_{\id}([0,T],L_2^{\uparrow}(\rho))$
and
$\Ii(\varphi_n)\to \Ii(\varphi)$.

We denote 
\begin{equation}\label{f_D_two_arrows}
D^{\uparrow\uparrow}=\{g\in D([0,1],\R):\ \forall u<v\in[0,1],\ g(u)<g(v)\}
\end{equation}
and define $L_2^{\uparrow\uparrow}(\rho)$ in the same way as $L_2^{\uparrow}(\rho)$, replacing
$D^{\uparrow}$ by $D^{\uparrow\uparrow}$. Set
\begin{align*}
\RR=&\bigg\{\varphi\in C([0,T], L_2^{\uparrow\uparrow}(\lambda)):\ \Ii(\varphi)<\infty,\ \dot{\varphi}\in
H_{L_2(\lambda)},\\
&\dot{\varphi}\ \mbox{is continuous in } (u,t)\ \mbox{and } \varphi(u,t)\ \mbox{is continuously differentiable in}\ u\\
&\mbox{with bounded (uniformly in } t,u \mbox{) derivative}\ \frac{\partial\varphi(u,t)}{\partial u}\bigg\},
\end{align*}
where
$$
H_{L_2(\lambda)}=\{h\in C([0,T],L_2(\lambda)):\ \dot{h}\in L_2([0,T],L_2(\lambda))\}.
$$
For $h\in H_{L_2(\lambda)}$ define the new probability measure $\p^{\eps,h}$ with density
$$
\frac{d\p^{\eps,h}}{d\p}=M_T^{\eps,h},
$$
where $M_T^{\eps,h}$ is defined by~\eqref{f_martingale_M}. By Novikov's theorem and Theorem~\ref{theorem_Girsanov}, the random element $y^{\eps}$ in
$D([0,1],C[0,T])$
satisfies (w.r.t. $\p^{\eps,h}$) the following properties
\begin{enumerate}
\item[$(D^{\eps}1)$] for all $u\in[0,1]$ the process
$$
\eta^{\eps}(u,\cdot)=y^{\eps}(u,\cdot)-\int_0^{\cdot}\left(\pr_{y^{\eps}(s)}h(s)\right)(u)ds
$$
is a continuous local square integrable $(\F_{\eps t})$-martingale;

\item[$(D^{\eps}2)$] for all $u\in[0,1]$, $y^{\eps}(u,0)=u$;

\item[$(D^{\eps}3)$] for all $u<v$ from $[0,1]$ and $t\in[0,T]$, $y^{\eps}(u,t)\leq y^{\eps}(v,t)$;

\item[$(D^{\eps}4)$]  for all $u,v\in[0,1]$ and $t\in[0,T]$,
$$
[
\eta^{\eps}(u,\cdot),\eta^{\eps}(v,\cdot)]_t=\eps\int_0^t\frac{\I_{\{\tau^{\eps}_{u,v}\leq
s\}}ds}{m^{\eps}(u,s)},
$$
where $\tau^{\eps}$ and $m^{\eps}$ is defined in the same way as $\tau$ and $m$, replacing $y$ by
$y^{\eps}$.
\end{enumerate}

Note that if $\varphi\in\RR$, then
\begin{equation}\label{f_lim_of_prob}
\lim_{\eps\to 0}\p^{\eps,\dot{\varphi}}\{y^{\eps}\in
B_r(\varphi)\}=1
\end{equation}
for all $r>0$, by Proposition~\ref{proposition_conv_in_probability}.

Let us fix $\varphi\in\RR$ and set $h=\dot{\varphi}$. Noting that $y^{\eps}\in L_2([0,T],L_2(\lambda))$ a.s., we estimate
\begin{align*}
&\p\{y^{\eps}\in B_r(\varphi)\}=\E^{\eps,h}\frac{\I_{\{y^{\eps}\in B_r(\varphi)\}}}{M_T^{\eps,h}}\\
&\geq\exp\left\{-\frac{1}{\eps}\sup_{\psi\in B_r(\varphi)\cap L_2([0,T],L_2(\lambda))}F(\psi,h)\right\}
\p^{\eps,h}\{y^{\eps}\in B_r(\varphi)\},
\end{align*}
where $\E^{\eps,h}$ denotes the expectation w.r.t. $\p^{\eps,h}$. Thus, by~\eqref{f_lim_of_prob},
\begin{equation}\label{f_inequality_for_LDP}
\varliminf_{\eps\to 0}\eps\ln\p\{y^{\eps}\in B_r(\varphi)\}\geq -\sup_{\psi\in B_r(\varphi)\cap
L_2([0,T],L_2(\lambda))}F(\psi,h).
\end{equation}

Next we prove the continuity of the map $g\mapsto\pr_gf$ on $L_2^{\uparrow\uparrow}(\rho)$ for each $f\in L_2(\lambda)$.

\begin{lem}\label{lemma_semi_cont_of_F}
Let $g\in L_2^{\uparrow\uparrow}(\rho)$ and $f\in L_2(\lambda)$. If a
sequence $\{g_n\}_{n\geq 1}$ of elements $L_2^{\uparrow}(\rho)$ converges to $g$ a.e., then
$\{\pr_{g_n}f\}_{n\geq 1}$ converges to $f$ in $L_2(\lambda)$ and 
\begin{equation}\label{f_semi_cont_of_F}
\lim_{n\to\infty}\|\pr_{g_n}f\|_{L_2(\lambda)}= \|f\|_{L_2(\lambda)}.
\end{equation}
\end{lem}

\begin{proof}
First we note that $\sigma(g)$ is a Borel $\sigma$-algebra on $[0,1]$. Moreover, since  $\{g_n\}_{n\geq 1}$ converges to $g$ a.e., one can show that for almost all $a,b\in[0,1]$ and $a<b$ there exists a sequence $\{c_n,\ d_n,\ n\in\N\}$ such that $\lambda((a,b)\bigtriangleup g_n^{-1}(c_n,d_n))\to 0$ as $n\to\infty.$ This immediately implies that for all $A\in\sigma(g)$ there exist $A_n\in\sigma(g_n)$, $n\in\N$, such that $\lambda(A\bigtriangleup A_n)\to 0$. Thus, by Proposition~1~\cite{Alonso:1998}, $\pr_{g_n}f\to\pr_gf=f$ in $L_2$. Consequently, we also have $\|\pr_{g_n}f\|_{L_2(\lambda)}\to\|f\|_{L_2(\lambda)}$.  The lemma is proved.
\end{proof}

So, by Lemma~\ref{lemma_semi_cont_of_F}, \eqref{f_inequality_for_LDP} yields
$$
\lim_{r\to 0}\varliminf_{\eps\to 0}\eps\ln\p\{y^{\eps}\in B_r(\varphi)\}\geq
-F(\varphi,h)=-\Ii(\varphi).
$$
Here the last equality follows from the form of the map $F$, the choice of $h$ and Lemma~\ref{lemma_integration_by_parts}.

\begin{prp}\label{proposition_aproxim_of_I}
For each $\varphi\in\h$, where $\h$ is defined by~\eqref{f_set_H}, satisfying $\Ii(\varphi)<\infty$, there exists a sequence
$\{\varphi_n\}\subset\RR$ such that $\varphi_n\to\varphi$ in $C_{\id}([0,T], L_2^{\uparrow}(\rho))$
and
$\Ii(\varphi_n)\to \Ii(\varphi)$.
\end{prp}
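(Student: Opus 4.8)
The plan is to produce $\varphi_n$ from $\varphi$ by a chain of five elementary regularizations, each of which (a) keeps the path monotone in $u$, (b) preserves $\varphi(0)=\id$, (c) converges to the previous function in $C([0,T],L_2(\lambda))$, and (d) changes $\Ii$ only by a quantity tending to $0$. Two general facts make the bookkeeping painless. First, $\kappa\le 1$ on $[0,1]$, so $\|\cdot\|_{L_2(\mu)}\le\|\cdot\|_{L_2(\lambda)}$, and therefore convergence in $C([0,T],L_2(\lambda))$ already gives convergence in $C_{\id}([0,T],L_2^{\uparrow}(\mu))$. Second, at every stage lower semicontinuity of $\Ii$ (already established) yields $\liminf\Ii\ge\Ii(\text{limit})$, while the explicit energy estimate at that stage gives $\limsup\Ii\le\Ii(\text{previous})$; chaining these and using $\Ii(\psi^{(1)})\to\Ii(\varphi)$ below, one gets $\Ii(\varphi_n)\to\Ii(\varphi)$.

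Concretely, write $\varphi(t)=\id+\int_0^t\dot\varphi(s)\,ds$ (so $\varphi\in\h$) and apply: \emph{(i) time reparametrization:} $\psi^{\delta_0}(t)=\id$ for $t\le\delta_0$ and $\psi^{\delta_0}(t)=\varphi\!\big(\frac{T(t-\delta_0)}{T-\delta_0}\big)$ for $t\ge\delta_0$, which lies in $D^{\uparrow}$, is constant $\id$ on $[0,\delta_0]$, has $\Ii(\psi^{\delta_0})=\frac{T}{T-\delta_0}\Ii(\varphi)\to\Ii(\varphi)$, and $\psi^{\delta_0}\to\varphi$ uniformly by uniform continuity of $\varphi$ on $[0,T]$. \emph{(ii) range cap:} replace $\psi$ by $\psi_R(u,t)=\mathrm{med}(-R,\psi(u,t),R)$; since $x\mapsto\mathrm{med}(-R,x,R)$ is $1$-Lipschitz one gets $\Ii(\psi_R)\le\Ii(\psi)$, and since $\{\psi(\cdot,t):t\in[0,T]\}$ is compact in $L_2(\lambda)$ (continuous image of $[0,T]$) it is $L_2$-equi-integrable, so $\psi_R\to\psi$ in $C([0,T],L_2(\lambda))$; now $\|\psi_R\|_\infty\le R$ and $\psi_R(t)=\id$ on $[0,\delta_0]$ for $R\ge1$. \emph{(iii) time mollification:} convolve in $t$ with a backward mollifier $\rho_\theta$, $\theta<\delta_0$ (extending by $\id$ for $t<0$); this keeps the path in $D^{\uparrow}$, keeps it $=\id$ on $[0,\delta_0]$, makes $t\mapsto\psi(t)$ $C^\infty$ with $\dot\psi\in L_2([0,T],L_2(\lambda))$, and does not increase $\Ii$ by Young's inequality. \emph{(iv) space mollification:} convolve in $u$ with a symmetric mollifier $\rho_\eta$, after extending $\psi(\cdot,t)$ to $\R$ affinely with slope $1$ outside $[0,1]$; the result is $C^\infty$ and non-decreasing in $u$, equals $\id$ for $t\le\delta_0$ (as $\rho_\eta*\id=\id$), and, crucially, the positive measure $d_u\psi(\cdot,t)$ on $[-\eta,1+\eta]$ has total mass $\psi(1,t)-\psi(0,t)+2\eta\le2R+2\eta$, so $\sup_{u,t}|\partial_u\psi(u,t)|\le\|\rho_\eta\|_\infty(2R+2\eta)<\infty$; the energy created by $\partial_t$ acting on the affine pieces is $O(\eta)$, so $\Ii$ moves by $o(1)$. \emph{(v) strict monotonization:} set $\varphi_n=\psi+\epsilon\,\chi(t)\,\id$ with $\chi\in C^\infty$, $\chi\equiv0$ on $[0,\delta_0/2]$, $\chi\equiv1$ on $[\delta_0,T]$; then $\partial_u\varphi_n\ge\min(1,\epsilon)>0$ (using $\psi(\cdot,t)=\id$ wherever $\chi(t)=0$), so $\varphi_n(t)\in L_2^{\uparrow\uparrow}(\lambda)$, $\varphi_n(0)=\id$, $\dot\varphi_n$ is continuous in $(u,t)$ and $\dot\varphi_n\in H_{L_2(\lambda)}$, and $\varphi_n\to\psi$ with $\Ii(\varphi_n)\to\Ii(\psi)$ as $\epsilon\to0$. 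Hence $\varphi_n\in\RR$, and a diagonal choice $\delta_0=1/n$, $R=R_n\to\infty$, $\theta=\theta_n\to0$, $\eta=\eta_n\to0$, $\epsilon=\epsilon_n\to0$ gives $\varphi_n\to\varphi$ in $C_{\id}([0,T],L_2^{\uparrow}(\mu))$ and $\Ii(\varphi_n)\to\Ii(\varphi)$.

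The main obstacle is step (ii) — more precisely, realizing it is necessary. For $\varphi\in\h$ with $\Ii(\varphi)<\infty$ one has $\sup_t\|\varphi(\cdot,t)\|_{L_2(\lambda)}<\infty$, but in general \emph{not} $\sup_t\|\varphi(\cdot,t)\|_\infty<\infty$ (a finite-energy path of monotone functions may develop, along a sequence of times, arbitrarily tall narrow steps near an endpoint); without first bounding the range, the $u$-mollified approximants of step (iv) would violate the uniform bound on $\partial_u\varphi_n$ demanded by $\RR$. Capping the range by a $1$-Lipschitz truncation simultaneously makes $\|\psi\|_\infty$ uniformly bounded, does not increase $\Ii$, and converges thanks to compactness of the path (hence $L_2$-equi-integrability) — this is exactly what makes the subsequent space-mollification admissible. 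Everything else is routine: carrying the constraint $\varphi(0)=\id$ through each operation (handled by flattening $\varphi$ to $\id$ on $[0,\delta_0]$ and only mollifying/perturbing at scales $<\delta_0$, together with $\rho_\eta*\id=\id$ and the cutoff $\chi$ vanishing near $0$), verifying monotonicity is preserved under the positive-kernel convolutions, and tracking the $o(1)$ energy errors.
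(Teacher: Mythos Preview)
Your proof is correct, and it takes a genuinely different route from the paper's. The paper first asserts (in a remark, without justification) that it suffices to treat $\varphi$ with $\sup_{u,t}|\dot\varphi(u,t)|<\infty$, and then performs a single simultaneous $(u,t)$-mollification of a carefully extended $\dot\varphi$, adding the linear tilt $\alpha t u$ at the end; the key observation is that the mollified integrand recombines into $\varphi\ast\varsigma_\delta$, which inherits monotonicity from $\varphi$. Your five-step chain (time reparametrization, range cap, time mollification, space mollification, strict tilt) is longer but more transparent: each operation visibly preserves monotonicity and the initial condition $\varphi(0)=\id$, and you track the energy by elementary $1$-Lipschitz and Young/Jensen bounds together with lower semicontinuity of $\Ii$. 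Crucially, your steps (ii)+(iii) actually \emph{supply} the reduction the paper leaves unproved: truncating the range by the $1$-Lipschitz map $x\mapsto\mathrm{med}(-R,x,R)$ keeps $\psi(\cdot,t)\in D^{\uparrow}$ while making $\|\psi\|_\infty\le R$, and the subsequent time convolution then forces $|\dot\psi|\le R\|\rho_\theta'\|_{L_1}$, giving the bounded derivative needed for the space mollification and the uniform bound on $\partial_u\varphi_n$ required by $\RR$. The paper's argument is shorter once that reduction is granted; yours is self-contained and makes the preservation of the constraints explicit at every stage.
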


\begin{proof} 
Firs we note that it is enough to check the statement only for functions
$\varphi$ with bounded derivative,
i.e.
$$
\sup_{(u,t)\in[0,1]\times[0,T]}|\dot{\varphi}(u,t)|<\infty.
$$
Then the proposition can be proved using the approximation of $\dot{\varphi}$ in $L_2([0,T],L_2(\lambda))$ by functions 
$$
\psi_{\delta,\alpha}(u,t)=u+\int_0^t\widetilde{\varphi}\ast\varsigma_{\delta}(u,s)ds+\alpha tu,\quad
u\in[0,1],\ t\in[0,T],\ \alpha,\delta>0, 
$$
where 
\begin{align*}
\widetilde{\varphi}(u,t)=\begin{cases}
                     \dot{\varphi}(u,t),&(u,t)\in[0,1]\times[0,T],\\
                     C,                &(u,t)\in(1,2]\times[0,T],\\
                     -C,                &(u,t)\in[-1,0)\times[0,T],\\
                     0,                 &\mbox{otherwise},
                   \end{cases}
\end{align*} 
$
C=\sup_{(u,t)\in[0,1]\times[0,T]}|\dot{\varphi}(u,t)|
$
and $\varsigma_{\delta}(u,t)$, $(u,t)\in\R^2$, is a standard mollifier.
\end{proof}

\begin{proof}[Proof of Theorem \ref{thm_varadhan_formula}] 
First of all we note that the family $\{y(\eps)\}_{\eps\in(0,T]}$ satisfies large deviations in $L_2(\rho)$ with the rate function
$\frac{1}{2}\|\id-\cdot\|_{L_2(\lambda)}^2$ (for simplicity of notation we suppose that $\|\id-g\|_{L_2(\lambda)}=+\infty$ whenever $g\notin L_2(\lambda)$). This immediately follows from Theorem~\ref{theorem_LDP} and the contraction principle for large deviations. Next, let us show that for each convex closed set $C$ in $L_2(\rho)$ with non-empty interior we have
\begin{equation}\label{f_rate_function_for_y}
\lim_{\eps\to 0}\eps\ln\p\{y(\eps)\in C\}=-\frac{1}{2}\inf_{g\in C}\|\id-g\|_{L_2(\lambda)}^2.
\end{equation}
To prove this, it is enough to show that
\begin{equation}\label{f_inequality_inf}
\inf_{g\in C^{\circ}}\|\id-g\|_{L_2(\lambda)}\leq\inf_{g\in C}\|\id-g\|_{L_2(\lambda)},
\end{equation}
where $C^{\circ}$ denotes the interior of $C$ in $L_2(\rho)$. Let $\inf_{g\in C}\|\id-g\|_{L_2(\lambda)}<\infty$ and $\delta\in(0,1]$ be fixed. Then there exists $g_0\in L_2(\lambda)\cap C$ such that 
$$
\|\id-g_0\|_{L_2(\lambda)}\leq\inf_{g\in C}\|\id-g\|_{L_2(\lambda)}+\delta.
$$

Since $C^{\circ}$ is non-empty, there exists $g_1\in C^{\circ}$ and $r>0$ such that $B(g_1,r):=\{g\in L_2(\rho):\ \|g-g_1\|_{L_2(\rho)}<r\}\subseteq C$. Moreover, $g_1$ can be chosen from $L_2(\lambda)$ because $L_2(\lambda)$ is dense in $L_2(\rho)$. Next, let $g_c=(1-\delta_0)g_0+\delta_0g_1$, where $\delta_0=\frac{\delta}{1+\|g_0+g_1\|_{L_2(\lambda)}}$. Using the convexity of $C$, we can see that $B(g_c,\delta_0r)\subseteq C$ and consequently, $g_c$ belongs to $C^{\circ}$. Now we can estimate
\begin{align*}
 \inf_{g\in C^{\circ}}\|\id-g\|_{L_2(\lambda)}&\leq \|\id-g_c\|_{L_2(\lambda)}\leq\|\id-g_0\|_{L_2(\lambda)}+\|g_0-g_c\|_{L_2(\lambda)}\\
 &\leq\|\id-g_0\|_{L_2(\lambda)}+\delta_0\|g_0+g_1\|_{L_2(\lambda)}\\
 &<\inf_{g\in C}\|\id-g\|_{L_2(\lambda)}+2\delta.
\end{align*}
Making $\delta\to 0$, we obtain~\eqref{f_inequality_inf}.

Thus, the Varadhan formula~\eqref{f_Varadhan_formula} is obtained from a straightforward 
combination~\eqref{f_rate_function_for_y},  the contraction principle for large 
deviations applied to the endpoint map
$C([0,1],
L_2(\rho)) \to L_2(\rho)$, $\mu_{t\in [0,1]} \to \mu_1$ and  the fact
that the map
\[ \iota: \,  D^{\uparrow}([0,1]) \ni  g \mapsto   g_\#\leb \in \mathcal
P(\mathbb R)  \]
is an isometry from the $L_2(\lambda)$-metric to the quadratic Wasserstein 
metric~$d_\mathcal W$ (see e.g. Section~2.1~\cite{Brenier:2013}).
\end{proof}

\appendix

\section{Some properties of absolutely continuous functions}

The following lemma follows from the definition of the stochastic integral, given in Section~\ref{stochastic_integral}, and the integration by parts formula for integrals with respect to real values continuous martingales.

\begin{lem}\label{lemma_integration_by_parts}
For every absolutely continuous function $f(t),\ t\in[0,T]$, with values in $L_2(\lambda)$
\begin{equation}\label{f_integr_by_parts}
\begin{split}
\int_0^t(f(s),dy(s))&=(f(t),y(t))_{L_2(\lambda)}-(f(0),y(0))_{L_2(\lambda)}\\
&-\int_0^t(\dot{f}(s),y(s))_{L_2(\lambda)}ds, \quad t\in[0,T],
\end{split}
\end{equation}
almost surely, where the integral in the left hand side was defined in Section~\ref{stochastic_integral}.
\end{lem}

\begin{lem}\label{lemma_density_of_H}
 The set $H$, which is defined in Subsection~\ref{subsect_The upper bound}, i.e.
$$
H=\{h\in C([0,T],L_2(\rho^{-1})):\ \dot{h}\in L_2([0,T],L_2(\rho^{-1}))\},
$$
where $\rho^{-1}(du)=\frac{1}{\kappa(u)}du$, is dense in $L_2([0,T],L_2(\lambda))$.
\end{lem}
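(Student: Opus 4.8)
The plan is to reduce the statement to a routine mollification-and-truncation argument, exploiting that $L_2(\mu^{-1})$ sits inside $L_2(\lambda)$. Since $\kappa(u)\in[0,1]$ we have $1/\kappa(u)\geq 1$, so $\|g\|_{L_2(\lambda)}\leq\|g\|_{L_2(\mu^{-1})}$ for every measurable $g$; hence $L_2(\mu^{-1})$ embeds continuously into $L_2(\lambda)$ and $H\subset L_2([0,T],L_2(\lambda))$. It therefore remains to show density, i.e. that an arbitrary $g\in L_2([0,T],L_2(\lambda))$ can be approximated in the $L_2([0,T],L_2(\lambda))$-norm by elements of $H$. First I would identify $L_2([0,T],L_2(\lambda))$ with $L_2([0,1]\times[0,T])$ carrying Lebesgue measure.

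Next I would approximate $g$ by a function that is smooth in $(u,t)$ and whose spatial support is contained in a fixed interval $[\delta,1-\delta]$, uniformly in $t$. This is possible because $C_c^{\infty}((0,1)\times(0,T))$ is dense in $L_2([0,1]\times[0,T])$: given $\eps>0$ one picks $\psi\in C_c^{\infty}((0,1)\times(0,T))$ with $\int_0^T\|g(t)-\psi(\cdot,t)\|_{L_2(\lambda)}^2\,dt<\eps$, and then fixes $\delta\in(0,1/2)$ so that $\supp\psi(\cdot,t)\subset[\delta,1-\delta]$ for all $t\in[0,T]$. The truncation near the endpoints $0$ and $1$ is harmless for the $L_2(\lambda)$-approximation, but it is precisely what is needed to enter $L_2(\mu^{-1})$.

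Finally I would verify $\psi\in H$. On $[\delta,1-\delta]$ one has $\kappa(u)\geq\delta^{\beta}$, hence $1/\kappa(u)\leq\delta^{-\beta}$ on $\supp\psi(\cdot,t)$; therefore
\[
\|\psi(\cdot,t)-\psi(\cdot,s)\|_{L_2(\mu^{-1})}^2\leq\delta^{-\beta}\,\|\psi(\cdot,t)-\psi(\cdot,s)\|_{L_2(\lambda)}^2
\]
for all $s,t\in[0,T]$, and the right-hand side tends to $0$ as $s\to t$ by uniform continuity of $\psi$ on $[0,1]\times[0,T]$, so $\psi\in C([0,T],L_2(\mu^{-1}))$. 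The same estimate applied to $\partial_t\psi$ (again smooth, with support in $[\delta,1-\delta]\times[0,T]$) shows $t\mapsto\partial_t\psi(\cdot,t)$ is continuous into $L_2(\mu^{-1})$, and the elementary identity $\psi(\cdot,t)=\psi(\cdot,0)+\int_0^t\partial_t\psi(\cdot,s)\,ds$, read as a Bochner integral in $L_2(\mu^{-1})$, gives $\dot\psi=\partial_t\psi\in L_2([0,T],L_2(\mu^{-1}))$. Hence $\psi\in H$ and, $\eps$ being arbitrary, $H$ is dense. There is no serious obstacle here: the only point requiring any care is keeping the spatial supports uniformly bounded away from $\{0,1\}$ in the approximation step, so that the singular weight $1/\kappa$ stays bounded on the support; the rest is standard.
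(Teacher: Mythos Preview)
Your argument is correct. The paper states this lemma without proof, so there is nothing to compare against; your mollification-and-truncation approach is exactly the natural way to supply the missing details, and every step is sound. The only point worth recording explicitly is that the identification $L_2([0,T],L_2(\lambda))\cong L_2([0,1]\times[0,T])$ is what makes the density of $C_c^\infty((0,1)\times(0,T))$ directly applicable, and you handle the passage back to the $L_2(\mu^{-1})$ framework cleanly via the uniform spatial support bound.
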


\begin{proof}
The lemma can be proved using the density of the space of continuously differentiable functions $C^1([0,T])$ on $[0,T]$ in $L_2([0,T],\R)$ and the approximation of each function $f\in L_2([0,T],L_2(\rho^{-1}))$ by $f_n=\sum_{k=1}^n\widetilde{f}_ke_k$, $n\in\N$, where $\{e_n\}_{n\geq 1}$ is an orthonormal basis in $L_2(\rho^{-1})$ and $\widetilde{f}_k(t)=(f(t),e_k)_{L_2(\rho^{-1})}$, $t\in[0,T]$.
\end{proof}

\section{Convergence of the flow of particles with drift}

In this section we prove that the process $\{z^{\eps}\}_{\eps\in(0,1]}$ satisfying $(D^{\eps}1)-(D^{\eps}4)$ with $h=\dot{\varphi}$ tends to
$\varphi$. Note that $z^{\eps}$ is a weak martingale solution to the equation
$$
dz^{\eps}(t)=\pr_{z^{\eps}}\dot{\varphi}(t)dt+\sqrt{\eps}\pr_{z^{\eps}}dW_t.
$$
If we show that $z^{\eps}$ converges to a process $z$ taking values from $L_2^{\uparrow\uparrow}(\rho)$, then by Lemma~\ref{lemma_semi_cont_of_F}, $z$ should be a solution of the equation
$$
dz(t)=\dot{\varphi}(t)dt,
$$
It gives $z=\varphi$.

Thus, we prove first that the family $\{z^{\eps}\}_{\eps\in(0,1]}$ is tight. Then we show that any limit point $z$ of $\{z^{\eps}\}_{\eps\in(0,1]}$ is $L_2^{\uparrow\uparrow}(\rho)$-valued process. As we noted, it immediately gives $z=\varphi$. Since $\{z^{\eps}\}_{\eps\in(0,1]}$ has only one nonrandom limit point, we obtain that $\{z^{\eps}\}_{\eps\in(0,1]}$ tends to $\varphi$ in probability (not only in distribution).

\begin{prp}\label{proposition_conv_in_probability}
 Let $\varphi\in\RR$ and a family of random elements $\{z^{\eps}\}_{\eps\in(0,1]}$
satisfies properties $(D^{\eps}1)-(D^{\eps}4)$ with $h=\dot{\varphi}$, then $z^\eps$ tends to
$\varphi$ in the
space $C([0,T], L_2(\rho))$ in probability.
\end{prp}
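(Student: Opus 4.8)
The plan is to prove the stronger quantitative statement that $\E^{\eps,h}\sup_{t\le T}\|z^\eps(t)-\varphi(t)\|^2_{L_2(\lambda)}\to 0$ as $\eps\to0$, where $h=\dot\varphi$ and $\E^{\eps,h}$ is the expectation under the measure carrying $(D^{\eps}1)$--$(D^{\eps}4)$; since $\kappa\le1$, i.e.\ $\mu\le\lambda$, this implies the asserted convergence in $C([0,T],L_2(\mu))$ in probability. The point to keep in mind is that the diffusion map $g\mapsto\pr_g$ is discontinuous, so one cannot simply pass to the limit in the drift $\int_0^t\pr_{z^\eps(s)}\dot\varphi(s)\,ds\to\int_0^t\dot\varphi(s)\,ds$; this becomes available only because $\pr_g$ \emph{is} continuous at the strictly increasing target $\varphi$, and the whole argument is built around exploiting exactly that. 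First I would put $(D^{\eps}1)$--$(D^{\eps}2)$ into the mild form $z^\eps(t)=\id+\int_0^t\pr_{z^\eps(s)}\dot\varphi(s)\,ds+\eta^\eps(t)$, with $\eta^\eps$ the continuous local $L_2(\lambda)$-valued martingale of $(D^{\eps}1)$; from $(D^{\eps}4)$ and Lemma~\ref{lemma_representation_of_y(u)} its trace quadratic variation is $\eps\int_0^t N^\eps(s)\,ds$, where $N^\eps(s)=\int_0^1\frac{du}{m^\eps(u,s)}=\mathrm{tr}\,\pr_{z^\eps(s)}$ is the a.s.\ finite (for $s>0$) number of clusters. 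Subtracting $\varphi(t)=\id+\int_0^t\dot\varphi(s)\,ds$ and applying Itô's formula to $t\mapsto\|z^\eps(t)-\varphi(t)\|^2_{L_2(\lambda)}$ — legitimate because $z^\eps(\cdot,t)$ is a bounded monotone step function squeezed between $z^\eps(0,t)$ and $z^\eps(1,t)$, whose second moments are bounded uniformly in $t,\eps$ by the $(P2)$-type bound $\E^{\eps,h}(\eta^\eps(u,t))^2\le C\sqrt\eps$, and $\E^{\eps,h}\int_0^TN^\eps(s)\,ds<\infty$ — and using that $z^\eps(s)$ is $\sigma(z^\eps(s))$-measurable (so $(I-\pr_{z^\eps(s)})(z^\eps(s)-\varphi(s))=-(I-\pr_{z^\eps(s)})\varphi(s)$), one gets
\begin{align*}
\|z^\eps(t)-\varphi(t)\|^2_{L_2(\lambda)}
&=2\int_0^t\big((I-\pr_{z^\eps(s)})\varphi(s),\,(I-\pr_{z^\eps(s)})\dot\varphi(s)\big)_{L_2(\lambda)}ds\\
&\quad+2\int_0^t\big(z^\eps(s)-\varphi(s),\,d\eta^\eps(s)\big)_{L_2(\lambda)}
+\eps\int_0^t N^\eps(s)\,ds .
\end{align*}

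The step I expect to be the main obstacle is the geometric estimate
\[
\Big|\big((I-\pr_{z^\eps(s)})\varphi(s),\,(I-\pr_{z^\eps(s)})\dot\varphi(s)\big)_{L_2(\lambda)}\Big|\le C\,\|z^\eps(s)-\varphi(s)\|^2_{L_2(\lambda)},
\]
which is the quantitative counterpart of the continuity of $\pr_g$ at a strictly increasing $g$ underlying Lemma~\ref{lemma_semi_cont_of_F} and Corollary~\ref{corollary_semi_cont_of_F}. It uses that $\varphi(\cdot,s)$ is \emph{uniformly} strictly increasing, with $\partial_u\varphi$ bounded above and below by positive constants, and that $\partial_u\varphi$ and $\dot\varphi$ are Lipschitz in $u$ uniformly in $s$; all of these hold for the mollified functions $\psi$ constructed in Proposition~\ref{proposition_aproxim_of_I} (there $\partial_u\psi\ge\min(1/2,\alpha t_1)>0$ and $\dot\psi$ is smooth in $u$), and establishing convergence for these $\psi$ suffices for the lower bound. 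Writing $\{I^\eps_j(s)\}_j$ for the finitely many intervals of constancy of the step function $z^\eps(\cdot,s)$, averaging a Lipschitz function over each $I^\eps_j(s)$ changes it by at most its oscillation there, which gives $\|(I-\pr_{z^\eps(s)})\varphi(s)\|^2_{L_2(\lambda)}\le\frac14\|\partial_u\varphi\|_\infty^2\sum_j|I^\eps_j(s)|^3$ and likewise with $\dot\varphi$ and its Lipschitz constant in place of $\varphi$; while, since $\varphi(\cdot,s)$ grows by at least $\delta_0|I^\eps_j(s)|$ across $I^\eps_j(s)$ whereas $z^\eps(\cdot,s)$ is constant there,
\[
\|z^\eps(s)-\varphi(s)\|^2_{L_2(\lambda)}\ge\sum_j\min_{c\in\R}\int_{I^\eps_j(s)}\!\big(c-\varphi(u,s)\big)^2du\ge\frac{\delta_0^3}{12\,\|\partial_u\varphi\|_\infty}\sum_j|I^\eps_j(s)|^3 ,
\]
so $\sum_j|I^\eps_j(s)|^3\le C'\|z^\eps(s)-\varphi(s)\|^2_{L_2(\lambda)}$ and the estimate follows by Cauchy--Schwarz. (This is precisely the quantitative mechanism by which the clusters of $z^\eps(\cdot,s)$ collapse once $z^\eps$ is close to the strictly increasing $\varphi$.)

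Granting this, set $\rho^\eps(t)=\|z^\eps(t)-\varphi(t)\|^2_{L_2(\lambda)}$. Taking $\E^{\eps,h}$ in the Itô identity removes the martingale term and gives $\E^{\eps,h}\rho^\eps(t)\le 2C\int_0^t\E^{\eps,h}\rho^\eps(s)\,ds+\eps\,\E^{\eps,h}\int_0^TN^\eps(s)\,ds$, and the analogue of $(P2)$ for the flow with bounded drift yields $\eps\,\E^{\eps,h}\int_0^TN^\eps(s)\,ds=\eps\int_0^1\E^{\eps,h}\!\int_0^T\frac{ds}{m^\eps(u,s)}\,du\le C\sqrt\eps\to0$. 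Grönwall's lemma then gives $\E^{\eps,h}\rho^\eps(T)\le C\sqrt\eps\,e^{2CT}\to0$, and feeding this back through Doob's and the Burkholder--Davis--Gundy inequality applied to the martingale term — whose quadratic variation is $\eps\int_0^\cdot\|\pr_{z^\eps(s)}(z^\eps(s)-\varphi(s))\|^2_{L_2(\lambda)}ds\le\eps\int_0^\cdot\rho^\eps(s)\,ds$ — upgrades it to $\E^{\eps,h}\sup_{t\le T}\rho^\eps(t)\to0$, which is the proposition. As an alternative organised along the outline preceding the statement: exponential-tightness-type bounds (bounded drift, vanishing martingale part, the monotone squeezing of Lemma~\ref{lemma_compact_in_L_2}) give tightness of $\{z^\eps\}$ in $C([0,T],L_2(\mu))$, the same geometric fact forces every subsequential limit $z$ to take values in $L_2^{\uparrow\uparrow}(\mu)$ for $t\in(0,T]$, whereupon Corollary~\ref{corollary_semi_cont_of_F} together with dominated convergence give $\int_0^t\pr_{z^\eps(s)}\dot\varphi(s)\,ds\to\int_0^t\dot\varphi(s)\,ds$ and hence $z=\varphi$, and uniqueness of the limit yields convergence in probability.
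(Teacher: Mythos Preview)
Your approach is genuinely different from the paper's and the geometric estimate at its core is correct and elegant: for a step function $g$ with plateaus $\{I_j\}$ one indeed has $\|(I-\pr_g)f\|_{L_2(\lambda)}^2\le C_f\sum_j|I_j|^3$ when $f$ is Lipschitz in $u$, while $\|g-\varphi\|_{L_2(\lambda)}^2\ge c\sum_j|I_j|^3$ when $\partial_u\varphi\ge\delta_0>0$, so the cross term in the It\^o identity is controlled by $\rho^\eps$ and Gr\"onwall becomes available. Your remark that these extra regularity hypotheses are not part of $\RR$ but are enjoyed by the mollified $\psi$ of Proposition~\ref{proposition_aproxim_of_I} is also correct (one checks $\partial_u\psi\ge\min\big(\int_{1/2}^1\varsigma,\ \alpha\delta/2\big)>0$), and restricting to such $\psi$ is harmless for the lower bound.

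The substantive gap is the sentence ``the analogue of $(P2)$ for the flow with bounded drift yields $\eps\,\E^{\eps,h}\!\int_0^T N^\eps(s)\,ds\le C\sqrt\eps$''. Property $(P2)$ is established for the flow under $\p$, not under $\p^{\eps,h}$; the coalescence times, and hence $m^\eps$ and $N^\eps$, have a different law after the Girsanov change. You cannot transfer the $\p$--estimate by H\"older because $\|M_T^{\eps,h}\|_{L^p(\p)}=\exp(O(1/\eps))$, and a direct proof of $(P1)$/$(P2)$ in the presence of drift would require redoing the mass--accumulation analysis of \cite{Konarovskyi:2014:arx} with an additional bounded drift --- plausible, but nontrivial and nowhere in the paper. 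Without this, the It\^o correction $\eps\int_0^t N^\eps(s)\,ds$ (equivalently $\int_0^1\E^{\eps,h}\eta^\eps(u,T)^2\,du$) is uncontrolled and the Gr\"onwall step collapses. Your one--line alternative at the end is also incomplete: saying ``the same geometric fact forces every subsequential limit $z$ to take values in $L_2^{\uparrow\uparrow}(\mu)$'' is circular --- that inequality only tells you $\sum_j|I_j|^3\le C\|z^\eps-\varphi\|^2$, which is useful \emph{after} you know $z^\eps\to\varphi$, not before.

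For comparison, the paper avoids any $(P2)$--type input under $\p^{\eps,h}$ altogether. It first proves tightness in $C([0,T],L_2(\mu))$ by the same devices used for exponential tightness (monotone squeezing via the averages $M^*,M_*$ and quadratic--variation bounds, which need only $\|\dot\varphi\|_\infty<\infty$). The heart of the argument is then Lemma~\ref{lemma_coalescing_of_particles}, which shows directly that for fixed $u<v$ the gap $z^\eps(v,t)-z^\eps(u,t)$ stays bounded away from $0$ with probability $\to1$; its proof rests on the purely deterministic drift estimate of Lemma~\ref{lemma_properti_of_drift} and a comparison of the martingale part with a Brownian motion of vanishing variance. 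This yields $z(t)\in L_2^{\uparrow\uparrow}(\mu)$ for every subsequential limit, whereupon Corollary~\ref{corollary_semi_cont_of_F} lets one pass to the limit in the drift and identify $z=\varphi$. Your quantitative route would give a rate ($O(\sqrt\eps)$) if the drifted $(P2)$ were available; the paper's route gives no rate but needs no such input.
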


To prove the proposition, we first establish tightness of $\{z^{\eps}\}$ in $C([0,T],$ $L_2(\rho))$,
using the boundedness of $\dot{\varphi}$ and the same argument as in the proof of exponential
tightness of $\{y^{\eps}\}$. Next, testing the convergent subsequence $\{z^{\eps'}\}$ by functions
$l$ from $C([0,1]\times[0,T],\R)$ and using integration by parts we will obtain
\begin{align*}
 \int_0^T\int_0^1l(u,t)(z^{\eps'}(u,t)-u)dtdu&=
\int_0^T\int_0^1L(u,t)(\pr_{z^{\eps'}(t)}\dot{\varphi}(t))(u)dtdu\\
&+\int_0^T\int_0^1L(u,t)d\eta^{\eps'}(u,t)du,
\end{align*}
where $L(u,t)=\int_t^Tl(u,s)ds$ and $z^{\eps'}\to z$. If $z(t)$ belongs to
$L_2^{\uparrow\uparrow}(\rho)$ for all $t\in[0,T]$, then passing to the limit and using
Lemma~\ref{lemma_semi_cont_of_F} we obtain
$$
\int_0^T\int_0^1l(u,t)(z(u,t)-u)dtdu=\int_0^T\int_0^1L(u,t)\dot{\varphi}(u,t)dtdu,
$$
which implies $z=\dot{\varphi}$.

The fact that $z(t)\in L_2^{\uparrow\uparrow}(\rho)$ will follow from the following lemma.

\begin{lem}\label{lemma_coalescing_of_particles}
Let $\varphi$ and $\{z^{\eps}\}$ be such as in Proposition~\ref{proposition_conv_in_probability}.
Then for each $u<v$ there exists $\delta>0$ such that
$$
\lim_{\eps\to 0}\p\{z^{\eps}(v,t)-z^{\eps}(u,t)\leq\delta\}=0.
$$
\end{lem}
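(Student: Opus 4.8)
The plan is to compare the drifted flow $z^{\eps}$ with the deterministic curve $\varphi$ and to show that $z^{\eps}(w,\cdot)\to\varphi(w,\cdot)$ in $L^{1}$, uniformly on $[0,t]$, for the two fixed labels $w=u$ and $w=v$; since $\varphi\in\RR$ the function $\varphi(\cdot,t)$ is strictly increasing, so $\varphi(v,t)-\varphi(u,t)>0$ and the choice $\delta=\tfrac12\big(\varphi(v,t)-\varphi(u,t)\big)$ (and $\delta=\tfrac12(v-u)$ when $t=0$) will do. To set this up, note that by $(D^{\eps}1)$ and $\varphi(w,t)=w+\int_{0}^{t}\dot\varphi(w,s)\,ds$,
\[
z^{\eps}(w,t)-\varphi(w,t)=\int_{0}^{t}\big[(\pr_{z^{\eps}(s)}\dot\varphi(s))(w)-\dot\varphi(w,s)\big]\,ds+\eta^{\eps}(w,t).
\]
I would then bound the two terms separately.

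The martingale term is the easy one: by $(D^{\eps}4)$ the bracket of $\eta^{\eps}(w,\cdot)$ is $\eps\int_{0}^{\cdot}m^{\eps}(w,s)^{-1}\,ds$, and the $(P2)$-type bound $\eps\,\E\int_{0}^{T}m^{\eps}(w,s)^{-1}\,ds=O(\sqrt{\eps})$ — which holds for $z^{\eps}$ by the same computation as $(P2)$ for a driftless flow, the mass process entering $(D^{\eps}4)$ exactly as in $(C4)$ — together with Doob's inequality gives $\E\sup_{s\le t}|\eta^{\eps}(w,s)|^{2}\to0$. For the drift discrepancy, recall that $\pr_{z^{\eps}(s)}\dot\varphi(s)$ is the conditional average of $\dot\varphi(\cdot,s)$ over the cell $\pi^{\eps}(w,s)$ of the partition generated by $z^{\eps}(s)$, which is an interval of length $m^{\eps}(w,s)$; since $\varphi\in\RR$, $\dot\varphi$ is continuous on the compact set $[0,1]\times[0,T]$, hence bounded and uniformly continuous with some modulus $\omega$, so $\big|(\pr_{z^{\eps}(s)}\dot\varphi(s))(w)-\dot\varphi(w,s)\big|\le\omega\big(m^{\eps}(w,s)\big)$. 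Thus the whole statement reduces to proving $\E\,m^{\eps}(w,s)\to0$ as $\eps\to0$ for each fixed $s\le T$ and $w\in\{u,v\}$: replacing $\omega$ by its least concave majorant and using Jensen's inequality and dominated convergence ($\omega\le2\|\dot\varphi\|_{\infty}$) then gives $\E\int_{0}^{t}\omega(m^{\eps}(w,s))\,ds\to0$, hence $z^{\eps}(w,t)\to\varphi(w,t)$ in $L^{1}$, which yields the lemma.

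The estimate $\E\,m^{\eps}(w,s)\to0$ is the heart of the matter, and this is where the strict monotonicity of $\varphi$ must be used. If $m^{\eps}(w,s)=\lambda(\pi^{\eps}(w,s))\ge2\eta$, then one of the labels $w\pm\eta$ has coalesced with $w$ by time $s$, so at the coalescence time $r\le s$ the gap $z^{\eps}(w\pm\eta,r)-z^{\eps}(w,r)$, which started at $\eta$, has dropped to $0$; its drift has magnitude at most the oscillation of $\dot\varphi(\cdot,r)$ over the union of the cells of $w$ and $w\pm\eta$ and vanishes once the two labels share a cell, and its martingale part has bracket $O(\sqrt\eps)$ as above. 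A crude Doob estimate on $z^{\eps}(w',\cdot)-w'$ (bounded drift plus the small martingale $\eta^{\eps}(w',\cdot)$) then shows $\E\,m^{\eps}(w,s)\le C s+o_{\eps}(1)$, which is small only for short times, not on all of $[0,T]$ — because the drift $\dot\varphi$ moves a label by an $O(s)$ amount that does not vanish with $\eps$. The remedy is a bootstrap along a finite grid $0=s_{0}<s_{1}<\dots<s_{N}=T$: on $[0,s_{1}]$ the cells are small by the crude bound; this makes $z^{\eps}(\cdot,s_{1})$ close to $\varphi(\cdot,s_{1})$ at the finitely many labels near $u$ and $v$ that we need to track, and since $\varphi(\cdot,s_{1})$ is \emph{strictly} increasing this forces those cells at $s_{1}$ to be small; one then repeats on $[s_{1},s_{2}]$ starting from an already strictly increasing, well-separated configuration, and so on. The main obstacle is exactly this bootstrap: because $\dot\varphi$ is merely continuous in the space variable and acts over a time horizon that may be large compared with the separations at play, the discretisation error $\omega(m^{\eps})$ and the drift-oscillation bounds cannot be absorbed into a single Gronwall estimate, and one must interleave the implications ``small cells $\Rightarrow z^{\eps}\approx\varphi$'' and ``$z^{\eps}\approx\varphi\Rightarrow$ small cells'' step by step, keeping the accumulated errors uniform in $\eps$ through the $N$ stages.
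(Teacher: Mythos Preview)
Your decomposition $z^{\eps}(w,t)-\varphi(w,t)=\int_0^t[(\pr_{z^{\eps}(s)}\dot\varphi(s))(w)-\dot\varphi(w,s)]\,ds+\eta^{\eps}(w,t)$ is natural, but both of the estimates you rely on have real problems.

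\textbf{The martingale term.} You claim that $\eps\,\E\int_0^T m^{\eps}(w,s)^{-1}\,ds=O(\sqrt{\eps})$ ``by the same computation as $(P2)$''. But $(P2)$ is an estimate under the original measure $\p$, whereas here the expectation is with respect to $\p^{\eps,h}$, and the Radon--Nikodym density $M_T^{\eps,h}=\exp\{\tfrac{1}{\eps}(\cdots)\}$ blows up as $\eps\to 0$, so no direct transfer is possible. Redoing the proof of $(P1)$--$(P2)$ for a flow with drift is not obviously a one-liner either: that proof uses the exact Brownian structure of the gap processes. The paper avoids this entirely by never looking at $\eta^{\eps}(w,\cdot)$ for a single label. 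Instead it works with the spatial averages $\xi^{\eps}_u(t)=\frac{1}{u_0-u}\int_u^{u_0}z^{\eps}(q,t)\,dq$ (and similarly near $v$), whose martingale parts have bracket bounded by $\eps\,\|\pr_{z^{\eps}(s)}\I_{[u,u_0]}\|_{L_2}^2/(u_0-u)^2\le\eps/(u_0-u)$---a deterministic bound with no $1/m^{\eps}$ factor at all.

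\textbf{The drift term and the bootstrap.} Your pointwise bound $|(\pr_{z^{\eps}(s)}\dot\varphi(s))(w)-\dot\varphi(w,s)|\le\omega(m^{\eps}(w,s))$ is fine, but it reduces the lemma to $\E\,m^{\eps}(w,s)\to 0$, which is essentially a restatement of what you are trying to prove: $m^{\eps}(w,s)\ge 2\eta$ is exactly the event that $w$ has coalesced with a label at distance $\eta$, and bounding the probability of that requires controlling the gap process $z^{\eps}(w\pm\eta,\cdot)-z^{\eps}(w,\cdot)$, whose martingale part again involves $1/m^{\eps}$. You yourself identify the bootstrap as ``the main obstacle'' and do not carry it out; in fact it is hard to close because at each stage you would need to track an ever-growing family of labels near $u$ and $v$.

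The paper's route is quite different and bypasses both issues. It first proves a purely deterministic statement (Lemma~\ref{lemma_properti_of_drift}): for suitable $u_0\in(u,\widetilde u)$, $v_0\in(\widetilde v,v)$ and a small $\gamma>0$, the averaged drift difference $\int_0^t(\pr_{\Ss(s)}\dot\varphi(s))(v_0)-(\pr_{\Ss(s)}\dot\varphi(s))(u_0)\,ds$ is bounded below by $\delta-v_0+u_0$ uniformly over \emph{all} partitions $\Ss(s)$ of $[u-\gamma,v+\gamma]$ that keep $u_0$ and $v_0$ in separate cells. This removes any need to know that cells are small. Then it argues by contradiction: on the event that neither $u$ has merged with $u-\gamma$ nor $v$ with $v+\gamma$, the averaged gap $\xi^{\eps}_v-\xi^{\eps}_u$ has drift $\ge\delta$ and a martingale part with bracket $\le\eps C t$, so it cannot drop to $\delta/2$; hence one of those boundary mergers must have occurred, and the interval $[u,v]$ can be extended by $\gamma$. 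Iterating finitely many times reaches $[0,1]$, where the same drift/bracket comparison gives an outright contradiction.

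If you want to rescue your line of argument, the missing ingredient is precisely a drift comparison that is uniform over all coalescence patterns (as in Lemma~\ref{lemma_properti_of_drift}), together with a device---spatial averaging---that gives a bracket bound not involving $m^{\eps}$. Without those, the bootstrap you outline remains circular.
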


Let $\Ss(u,v,t)$ be a finite set of intervals contained in $[u,v]$ for all $t\in(0,T]$ such that
\begin{enumerate}
 \item[1)] if $\pi_1,\pi_2\in\Ss(u,v,t)$ and $\pi_1\neq \pi_2$, then $\pi_1\cap\pi_2=\emptyset$;

 \item[2)] $\bigcup\Ss(u,v,t)=[u,v]$;

 \item[3)] for all $s<t$ and $\pi_1\in\Ss(u,v,s)$ there exists $\pi_2\in\Ss(u,v,t)$ that
contains $\pi_1$;

 \item[4)] there exists decreasing sequence $\{t_n\}_{n\geq 1}$ on $(0,T]$ that tends to $0$ and
$$
\Ss(u,v,t)=\Ss(u,v,t_n),\quad t\in[t_n,t_{n-1}),\ \ n\in\N,\ \ t_0=T;
$$

 \item[5)] for each monotone sequence $\pi(t)\in\Ss(u,v,t)$, $t>0$, $\bigcap_{t>0}\pi(t)$ is a one-point
set.
\end{enumerate}

\begin{lem}\label{lemma_properti_of_drift}
Let $\varphi\in\RR$ and $[\widetilde{u},\widetilde{v}]\subset(0,1)$. Then there exists $\gamma>0$
such that for each interval $(u,v)\supset[\widetilde{u},\widetilde{v}]$ there exist
$u_0\in(u,\widetilde{u})$ and $v_0\in(\widetilde{v},v)$:
\begin{align*}
\inf_{t\in[0,T]}\left[v_0-u_0+\int_0^t(\pr_{\Ss(s)}\dot{\varphi}(s))(v_0)ds-
\int_0^t(\pr_{\Ss(s)}\dot{\varphi}(s))(u_0)ds\right]=\delta>0,
\end{align*}
 for all $\Ss(t)=\Ss(0\vee(u-\gamma),(v+\gamma)\wedge 1,t)$, $t\in(0,T]$, such that $u_0$ and $v_0$
belong to separate intervals from $\Ss(T)$, and $\pr_{\Ss(t)}$ denotes the projection in $L_2(\lambda)$
onto the space of $\sigma(\Ss(t))$-measurable functions.
\end{lem}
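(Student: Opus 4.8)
The plan is to read the bracketed quantity as the signed distance, at time $t$, between the two tracer points $u_0$ and $v_0$ each transported by the partition--averaged drift $\pr_{\Ss(s)}\dot\varphi(s)$, and to compare this transport with the one produced by $\varphi$ itself. Since $\varphi\in\RR$, the field $\dot\varphi$ is continuous on the compact set $[0,1]\times[0,T]$, hence bounded, say $|\dot\varphi|\le C$, and uniformly continuous with some modulus $\omega$; moreover $\varphi(u,t)=u+\int_0^t\dot\varphi(u,s)\,ds$, so $(u,t)\mapsto\varphi(u,t)$ is jointly continuous and each $\varphi(\cdot,t)$ is strictly increasing. Hence
$$
2\delta_0:=\inf_{t\in[0,T]}\bigl(\varphi(\widetilde v,t)-\varphi(\widetilde u,t)\bigr)>0 ,
$$
and, since we shall always take $u_0\in(u,\widetilde u)$ and $v_0\in(\widetilde v,v)$, monotonicity of $\varphi(\cdot,t)$ gives $\varphi(v_0,t)-\varphi(u_0,t)\ge 2\delta_0$ for every $t$, uniformly over $(u,v)$ and over the admissible choices of $u_0,v_0$.

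For $s\in(0,T]$ and $w\in\{u_0,v_0\}$ the projection replaces $\dot\varphi(\cdot,s)$ by its average over the cell $\pi_w(s)\in\Ss(s)$ with $w\in\pi_w(s)$, so by continuity of $\dot\varphi(\cdot,s)$ there is $\xi_w(s)\in\pi_w(s)$ with $\bigl(\pr_{\Ss(s)}\dot\varphi(s)\bigr)(w)=\dot\varphi(\xi_w(s),s)$. Using $\int_0^t\dot\varphi(w,s)\,ds=\varphi(w,t)-w$ one rewrites the bracketed expression as $\bigl(\varphi(v_0,t)-\varphi(u_0,t)\bigr)+E(t)$ with
$$
E(t)=\int_0^t\Bigl[\bigl(\dot\varphi(\xi_{v_0}(s),s)-\dot\varphi(v_0,s)\bigr)-\bigl(\dot\varphi(\xi_{u_0}(s),s)-\dot\varphi(u_0,s)\bigr)\Bigr]ds ,
$$
and since $|\xi_w(s)-w|\le|\pi_w(s)|$, uniform continuity of $\dot\varphi$ yields $|E(t)|\le\int_0^T\bigl[\omega(|\pi_{v_0}(s)|)+\omega(|\pi_{u_0}(s)|)\bigr]ds$. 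Together with the first paragraph it therefore suffices to choose $\gamma$ and then $u_0,v_0$ so that this last integral stays $\le\delta_0$ for every admissible $\Ss$; the asserted infimum is then at least $\delta:=\delta_0>0$.

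The control of $E$ is the heart of the matter. A priori the cells $\pi_{u_0}(s),\pi_{v_0}(s)$ need not be small for $s$ near $T$, so one cannot estimate $\omega(|\pi_w(s)|)$ crudely at each fixed $s$; instead one exploits the full structure of $\Ss$. By $(4)$ the family $\Ss$ is a step function of $s$, so on each interval $[t_n,t_{n-1})$ the cell through $v_0$ is a fixed interval $C_n$ and $\int_{t_n}^{t_{n-1}}\bigl(\pr_{\Ss(s)}\dot\varphi(s)\bigr)(v_0)\,ds$ equals the increment over $[t_n,t_{n-1}]$ of the $C_n$--average of $\varphi$ (because there $\bigl(\pr_{\Ss(s)}\dot\varphi(s)\bigr)(v_0)=\partial_s\frac1{|C_n|}\int_{C_n}\varphi(w,s)\,dw$). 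This telescopes $E$ into a sum of differences of cell--averages of $\varphi$ across the nested cells $C_1\supseteq C_2\supseteq\cdots$, which by $(3)$ and $(5)$ shrink to $\{v_0\}$, and symmetrically for $u_0$; these differences are then bounded using the strict monotonicity and the uniformly bounded $u$--derivative of $\varphi(\cdot,s)$ from the first paragraph together with the localisation of all cells in $[0\vee(u-\gamma),(v+\gamma)\wedge1]$, the uniform $\gamma$ and the interior placement of $u_0,v_0$ in $(u,\widetilde u)$ and $(\widetilde v,v)$ being exactly what makes the sum summable and $\le\delta_0$ uniformly in $\Ss$. I expect precisely this uniform estimate of $E$ over all admissible $\Ss$ to be the main obstacle of the proof: it is the only step where properties $(1)$--$(5)$ of $\Ss$, and not merely boundedness of the drift, are genuinely used, and it is where one pays for the generality of the statement.
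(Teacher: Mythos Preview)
Your strategy matches the paper's: reduce to controlling the discrepancy between the partition-averaged transport and $\varphi$ itself, then exploit the step structure of $\Ss$ to telescope. You also correctly diagnose that the naive modulus-of-continuity bound on your $E(t)$ cannot work because cells can have order-one length for $s$ near $T$.

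However, your final paragraph, where the actual work should happen, remains a sketch and leaves the key quantitative step undone; you yourself flag it as ``the main obstacle'' without resolving it. The paper carries it out as follows. Writing $\pi_n=[a_n,b_n]$ for the cell through $u_0$ on the step $[s_n,s_{n-1})$, Abel summation gives
\[
u_0+\int_0^t(\pr_{\Ss(s)}\dot\varphi(s))(u_0)\,ds=\frac{1}{|\pi_k|}\int_{\pi_k}\varphi(q,t)\,dq+\sum_{n\ge k}\Bigl[\frac{1}{|\pi_{n+1}|}\int_{\pi_{n+1}}\varphi(q,s_n)\,dq-\frac{1}{|\pi_n|}\int_{\pi_n}\varphi(q,s_n)\,dq\Bigr].
\]
The decisive one-sided estimate, using that each $\varphi(\cdot,s)$ is nondecreasing with $\partial_u\varphi\le\widetilde C$, is that for nested $[a,b]\subseteq[c,d]$ one has
\[
\frac{1}{b-a}\int_a^b f-\frac{1}{d-c}\int_c^d f\le\widetilde C\,(a-c),
\]
so the tail sum is $\le\widetilde C\sum_{n\ge k}(a_{n+1}-a_n)=\widetilde C\,(u_0-a_k)$, and symmetrically $\ge-\widetilde C\,(b_k-u_0)$. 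This is where monotonicity of $\varphi(\cdot,s)$ enters, and it is what collapses the infinite sum to a quantity controlled only by the position of $u_0$ inside its \emph{current} cell, not by the cell's size. The explicit choices $\gamma=\delta_1/(8\widetilde C)$, $u_0=(u+\gamma)\wedge\widetilde u$, $v_0=(v-\gamma)\vee\widetilde v$ then force $u_0-a\le2\gamma$ and $d-v_0\le2\gamma$, so the error is at most $4\widetilde C\gamma=\delta_1/2$; the remaining main term (a difference of cell averages of $\varphi(\cdot,t)$) is bounded below by $\delta_1$ via a compactness argument on the continuous function $G(a,b,t)$.

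Two minor points. First, the bound you need is on $\partial_u\varphi$ (part of the definition of $\RR$), not on $\dot\varphi=\partial_t\varphi$; your last paragraph cites it ``from the first paragraph'', but your first paragraph only records the time-derivative bound. Second, your decomposition with main term $\varphi(v_0,t)-\varphi(u_0,t)$ is equivalent to the paper's, but the paper keeps the cell average as the main term, which makes the matching with the telescoped remainder more direct.
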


\begin{proof}
 Let $u\in[0,1]$ and $\Ss(t)=\Ss(0,1,t)$, $t\in[0,T]$. Then we can choose a sequence of intervals
$\{\pi_n\}_{n\geq 1}$ and a decreasing sequence $\{s_n\}_{n\geq 1}$ from
$(0,T]$ converging to $0$ such that $\pi_{n+1}\subseteq\pi_n\subseteq [0,1]$,
$\{u\}=\bigcap_{n=1}^{\infty}\pi_n$ and
\begin{align*}
u+\int_0^t(\pr_{\Ss(s)}\dot{\varphi}(s))(u)ds&=u+\sum_{n=1}^{\infty}\int_{s_n\wedge
t}^{s_{n-1}\wedge t} \left(\frac{1}{|\pi_n|}\int_{\pi_n}\dot{\varphi}(q,r)dq\right)dr\\
&=u+\sum_{n=1}^{\infty}\frac{1}{|\pi_n|}\int_{\pi_n}(\varphi(q,s_{n-1}\wedge t)-\varphi(q,s_n\wedge
t)dq\\
&=\frac{1}{|\pi_k|}\int_{\pi_k}\varphi(q,t)dq+\sum_{n=k}^{\infty}\left[\frac{1}{|\pi_{n+1}|}
\int_{\pi_{n+1}}\varphi(q,s_n)dq\right.\\
&\left.-\frac{1}{|\pi_n|}\int_{\pi_n}\varphi(q,s_n)dq\right],
\end{align*}
where $t\in[s_k,s_{k-1})$.

We estimate the $n$-th term of the sum. For convenience of calculations, let $[a,b]\subseteq [c,d]$ and
$f:[0,1]\to\R$
be non-decreasing absolutely continuous function with bounded derivative. So,
\begin{align*}
 \frac{1}{b-a}\int_a^bf(x)dx&-\frac{1}{d-c}\int_c^df(x)dx\leq
\frac{1}{b-a}\int_a^bf(x)dx-\frac{1}{b-c}\int_c^bf(x)dx\\
&=\frac{a-c}{(b-a)(b-c)}\int_a^bf(x)dx-\frac{1}{b-c}\int_c^af(x)dx\\
&\leq\frac{a-c}{b-c}f(b)-\frac{a-c}{b-c}f(c)=\frac{a-c}{b-c}(f(b)-f(c))\\
&\leq\sup_{x\in[0,1]}\dot{f}(x)(a-c).
\end{align*}

Taking $c=a_n$, $d=b_n$, $a=a_{n+1}$, $b=b_{n+1}$ and $f=\varphi(\cdot,s_n)$, where
$a_n<b_n$ are the ends of $\pi_n$, we get
\begin{align*}
u+\int_0^t(\pr_{\Ss(s)}\dot{\varphi}(s))(u)ds&\leq
\frac{1}{|\pi_k|}\int_{\pi_k}\varphi(q,t)dq+\sum_{n=k}^{\infty}\widetilde{C}(a_{n+1}-a_n)\\
&\leq\frac{1}{|\pi_k|}\int_{\pi_k}\varphi(q,t)dq+\widetilde{C}(u-a_k),
\end{align*}
where $\widetilde{C}=\sup_{(u,t)\in[0,1]\times[0,T]}\frac{\partial\varphi}{\partial u}(u,t)$.

Similarly, we can obtain
$$
u+\int_0^t(\pr_{\Ss(s)}\dot{\varphi}(s))(u)ds\geq\frac{1}{|\pi_k|}\int_{\pi_k}\varphi(q,
t)dq-\widetilde{C}(b_k-u).
$$

Next, let $\widetilde{u}$ and $\widetilde{v}$ is from the statement of the lemma. Since $\varphi$
is continuous on $[0,1]\times[0,T]$ and increasing by the first argument, the function
$$
G(a,b,t)=\frac{1}{\widetilde{v}-b}\int_b^{\widetilde{v}}\varphi(q,t)dq
-\frac{1}{a-\widetilde{u}}\int_{\widetilde{u}}^a\varphi(q,t)dq
$$
is positive and continuous on $E=\{(a,b,t):\widetilde{u}\leq a\leq b\leq \widetilde{v},\ t\in[0,T]\}$. Hence
$$
\delta_1=\inf_{E}G>0.
$$
Take $\gamma=\frac{\delta_1}{8\widetilde{C}}$ and for $u,v\in[0,1]$ satisfying $(u,v)\supset
[\widetilde{u},\widetilde{v}]$ set
\begin{align*}
 u_0&=(u+\gamma)\wedge\widetilde{u},\\
 v_0&=(v-\gamma)\vee\widetilde{v}.
\end{align*}

Let $\Ss(t)=\Ss(0\vee(u-\gamma),(v+\gamma)\wedge 1,t)$, $t\in(0,T]$, such that $u_0$ and $v_0$
belong to separate intervals from $\Ss(T)$ and $t\in(0,T]$ be fixed. For $\pi_1,\pi_2$ belonging to
$\Ss(t)$ and containing $u_0,\ v_0$ respectively, we obtain
\begin{align*}
v_0-u_0&+\int_0^t(\pr_{\Ss(s)}\dot{\varphi}(s))(v_0)ds-\int_0^t(\pr_{\Ss(s)}\dot{\varphi}
(s))(u_0)ds\\
&\geq\frac{1}{|\pi_2|}\int_{\pi_2}\varphi(q,t)dq-\frac{1}{|\pi_1|}\int_{\pi_1}\varphi(q,t)dq\\
&-\widetilde{C}(d-v_0)-\widetilde{C}(u_0-a)\\
&\geq G(b\vee\widetilde{u},c\wedge\widetilde{v},t)-\widetilde{C}(d-v_0)-
\widetilde{C}(u_0-a),
\end{align*}
where $c<d$ and $a<b$ are the ends of $\pi_1$ and $\pi_2$ respectively and $b\leq c$ because $u_0$
and $v_0$ belong to separate intervals from $\Ss(T)$. Since $u-\gamma\leq a<u_0\leq u+\gamma$
and $v-\gamma\leq v_0<b\leq v+\gamma$, we have
\begin{align*}
v_0-u_0&+\int_0^t(\pr_{\Ss(s)}\dot{\varphi}(s))(v_0)ds-\int_0^t(\pr_{\Ss(s)}\dot{\varphi}
(s))(u_0)ds\\
&\geq \delta_1-\widetilde{C}(v+\gamma-v+\gamma)-\widetilde{C}(u+\gamma-u+\gamma)\\
&=\delta_1-4\widetilde{C}\gamma=\frac{\delta_1}{2}>0.
\end{align*}
It finishes the proof of the lemma.
\end{proof}

\begin{proof}[Proof of Lemma~\ref{lemma_coalescing_of_particles}]
Let $u<v$ be a fixed points from $(0,1)$ and $\delta$, $u_0$, $v_0$, $\gamma$ be defined in
Lemma~\ref{lemma_properti_of_drift} for some $[\widetilde{u},\widetilde{v}]\subset (u,v)$. Suppose
that
$$
\varliminf_{\eps\to 0}\p\left\{z^{\eps}(v,t)-z^{\eps}(u,t)\leq\frac{\delta}{2}\right\}>0.
$$

Set
\begin{align*}
 B_1^{\eps}&=\{z^{\eps}(u,t)=z^{\eps}((u-\gamma)\vee 0,t)\},\\
 B_2^{\eps}&=\{z^{\eps}(v,t)=z^{\eps}((v+\gamma)\wedge 1,t)\},\\
 A^{\eps}&=\left\{z^{\eps}(v,t)-z^{\eps}(u,t)\leq\frac{\delta}{2}\right\}.
\end{align*}

Since the diffusion rate of $z^{\eps}(u,\cdot)$ grows to infinity as the time tends to 0, it is
convenient to work with the mean of $z^{\eps}$ because we can control the growing of
diffusion rate in this case. So, denote
\begin{align*}
 \xi_u^{\eps}(t)&=\frac{1}{u_0-u}\int_u^{u_0}z^{\eps}(q,t)dq,\\
 \xi_v^{\eps}(t)&=\frac{1}{v-v_0}\int_{v_0}^uz^{\eps}(q,t)dq.
\end{align*}
It is easy to see that
$$
\widetilde{A}^{\eps}=\left\{\xi_v^{\eps}(t)-\xi_u^{\eps}(t)\leq\frac{\delta}{2}\right\}\supseteq
A^{\eps}.
$$

Next, using the processes $\xi_u^{\eps}$ and $\xi_v^{\eps}$, we want to show that
\begin{equation}\label{f_lim_of_A_with_B}
\lim_{\eps\to 0}\p\{A^{\eps}\cap(B_1^{\eps}\cup B_2^{\eps})^c\}=0.
\end{equation}

Note that $\xi_u^{\eps}$ and
$\xi_v^{\eps}$ are diffusion processes, namely
\begin{align*}
  \xi_u^{\eps}(t)&=\frac{u_0+u}{2}+\int_0^ta_u^{\eps}(s)ds+\chi_u^{\eps}(t),\\
  \xi_v^{\eps}(t)&=\frac{v_0+v}{2}+\int_0^ta_v^{\eps}(s)ds+\chi_v^{\eps}(t),
\end{align*}
where
\begin{align*}
  a_u^{\eps}(t)&=\frac{1}{u_0-u}\int_u^{u_0}(\pr_{z^{\eps}(t)}\dot{\varphi}(t))(q)dq,\\
  a_v^{\eps}(t)&=\frac{1}{v-v_0}\int_{v_0}^v(\pr_{z^{\eps}(t)}\dot{\varphi}(t))(q)dq,\\
  \chi_u^{\eps}(t)&=\frac{1}{u_0-u}\int_0^t\int_u^{u_0}d\eta^{\eps}(q,s)dq,\\
  \chi_v^{\eps}(t)&=\frac{1}{v-v_0}\int_0^t\int_{v_0}^vd\eta^{\eps}(q,s)dq.
\end{align*}
By choosing of $u_0,\ v_0$ and $\delta$, we have
\begin{align*}
L^{\eps}(t,\omega)&=\frac{v_0+v}{2}-\frac{u_0+u}{2}\\
&+\int_0^ta_v^{\eps}(s,\omega)ds-\int_0^ta_u^{\eps}(s,\omega)ds\geq\delta,\quad t\in[0,T],\ \
\omega\in(B_1^{\eps}\cup B_2^{\eps})^c.
\end{align*}

Denote the difference $\xi_v^{\eps}-\xi_u^{\eps}$ by $\xi^{\eps}$. Note that the quadratic variation of the martingale part $\chi^{\eps}$ of $\xi^{\eps}$ satisfies
$$
[\chi^{\eps}]_t\leq\eps Ct,
$$
where $C=\frac{1}{u_0-u}+\frac{1}{v-v_0}$. So, denoting
$$
\sigma^{\eps}=\inf\left\{t:\ \xi^{\eps}(t)=\frac{\delta}{2}\right\},
$$
we get
$$
\p\{A^{\eps}\cap(B_1^{\eps}\cup B_2^{\eps})^c\}\leq\p\{\{\sigma^{\eps}\leq t\}\cap(B_1^{\eps}\cup
B_2^{\eps})^c\},
$$
which implies~\eqref{f_lim_of_A_with_B}. Indeed, by Theorem~2.7.2~\cite{Watanabe:1981:en}, there
exists a standard Wiener process $w^{\eps}(t),\ t\geq 0$, such that
$$
\chi^{\eps}(t)=w^{\eps}([\chi^{\eps}]_t).
$$
Define $\tau^{\eps}=\inf\left\{t:w^{\eps}(\eps Ct)=-\frac{\delta}{2}\right\}$. Since
$$
\xi^{\eps}(t)=\delta+(L^{\eps}(t)-\delta)+\chi^{\eps}(t)
$$
and the term $L^{\eps}-\delta$ is non-negative on $(B_1^{\eps}\cup B_2^{\eps})^c$, the process
$\delta+w^{\eps}(\eps C\cdot)$ hits at the point $\frac{\delta}{2}$ sooner than $\xi^{\eps}$.
Consequently, $\{\sigma^{\eps}\leq t\}\cap(B_1^{\eps}\cup B_2^{\eps})^c\subseteq \{\tau^{\eps}\leq
t\}\cap(B_1^{\eps}\cup B_2^{\eps})^c$. This yields~\eqref{f_lim_of_A_with_B}.

Next, the relation $\p\{A^{\eps}\cap(B_1^{\eps}\cup B_2^{\eps})\}
=\p\{A^{\eps}\}-\p\{A^{\eps}\cap(B_1^{\eps}\cup B_2^{\eps})^c\}$,~\eqref{f_lim_of_A_with_B} and the assumption $\varliminf_{\eps\to 0}\p\{A^{\eps}\}>0$ imply
$$
\varliminf_{\eps\to 0}\p\{A^{\eps}\cap(B_1^{\eps}\cup B_2^{\eps})\}>0.
$$
Thus, we obtain
$$
\varliminf_{\eps\to 0}\p\{A^{\eps}\cap B_1^{\eps}\}>0\ \ \mbox{or}\ \
\varliminf_{\eps\to 0}\p\{A^{\eps}\cap B_2^{\eps}\}>0.
$$
It means that we can extend the interval $[u,v]$ to $[(u-\gamma)\vee 0,v]$ or $[u,(v+\gamma)\wedge
1]$, i.e.
$$
\varliminf_{\eps\to 0}\p\left\{z^{\eps}(v,t)-z^{\eps}((u-\gamma)\vee
0,t)\leq\frac{\delta}{2}\right\}>0
$$
or
$$
\varliminf_{\eps\to 0}\p\left\{z^{\eps}((v+\gamma)\wedge 1,t)-z^{\eps}(u,t)
\leq\frac{\delta}{2}\right\}>0.
$$
Noting that $\gamma$ only depends on $[\widetilde{u},\widetilde{v}]$ and applying the same
argument for new start points of the particles in finitely many steps, we obtain
$$
\varliminf_{\eps\to 0}\p\left\{z^{\eps}(v_1,t)-z^{\eps}(u_1,t)\leq\frac{\delta}{2}\right\}>0,
$$
where $(u_1,v_1)\supset [\widetilde{u},\widetilde{v}]$ and $u_1=0$ or
$v_1=1$. Next, applying the same argument for new start points of the particles, but replacing
$B_1^{\eps}\cup B_2^{\eps}$ by $B_1^{\eps}$, if $v_1=1$ or $B_2^{\eps}$, if $u_1=0$, in finitely
many steps we get
$$
\varliminf_{\eps\to 0}\p\left\{z^{\eps}(1,t)-z^{\eps}(0,t)\leq\frac{\delta}{2}\right\}>0.
$$
But it is not possible because the same argument (without $B_1^{\eps}$ and $B_2^{\eps}$) gives
$$
\lim_{\eps\to 0}\p\left\{z^{\eps}(1,t)-z^{\eps}(0,t)\leq\frac{\delta}{2}\right\}=0.
$$
The lemma is proved.
\end{proof}

\begin{proof}[Proof of Proposition~\ref{proposition_conv_in_probability}]
Using Jakubowski's tightness criterion (see Theorem~3.1~\cite{Jakubowski:1986}) and boundedness of
$\dot{\varphi}$, as in the proof of exponential tightness of $\{y^{\eps}\}$
(see Proposition~\ref{proposition_exponential_tightness}), we can prove that $\{z^{\eps}\}_{\eps\in(0,1]}$
is tight in $C([0,T],L_2(\rho))$.

Let $\{z^{\eps'}\}$ be a convergent subsequence and $z$ is its limit. By Skorokhod's theorem (see
Theorem~3.1.8~\cite{Ethier:1986}), we can define a probability space and a sequence of random
elements $\{\widetilde{z}^{\eps'}\}$, $\widetilde{z}$ on this space such that
$\law(z^{\eps'})=\law(\widetilde{z}^{\eps'})$, $\law(z)=\law(\widetilde{z})$ and
$\widetilde{z}^{\eps'}\to\widetilde{z}$ in $C([0,T],L_2(\rho))$ a.s. If we show that
$\widetilde{z}=\varphi$, we finish the proof because this implies that
$\widetilde{z}^{\eps'}\to\varphi$ in $C([0,T],L_2(\rho))$ in probability and since $\varphi$ is
non-random, $z^{\eps'}\to\varphi$ in probability. Thus, it will easily yield that $z^{\eps}\to\varphi$ in probability.

So, for convenience of notation we will assume that $z^{\eps}\to z$ a.s., instead
$\widetilde{z}^{\eps'}\to\widetilde{z}$. First we check that $z(t)\in
L_2^{\uparrow\uparrow}(\rho)$ for all $t\in[0,T]$. Let $t$ is fixed. One can show that
$$
z^{\eps}(t)\to z(t)\quad \mbox{in measure}\ \p\otimes\rho.
$$
By Lemma~4.2~\cite{Kallenberg:2002}, there exists subsequence $\{\eps'\}$ such that
$$
z^{\eps'}(t)\to z(t)\quad \p\otimes\rho-\mbox{a.e.}
$$
Set $A=\{(\omega,u):\ z^{\eps'}(u,t,\omega)\to z(u,t,\omega)\}$. Since $\p\otimes\rho(A^c)=0$, it is
easy to see that there exists the set $U\subseteq[0,1]$ such that $\rho(U^c)=0$ and $\p(A_u)=1$, for
all $u\in U$, where $A_u=\{\omega:\ (\omega,u)\in A\}$. Note, it implies that for each $u\in U$
$$
z^{\eps'}(u,t)\to z(u,t)\quad \mbox{a.s.}
$$
Let $U_{count}$ is a countable subset of $U$ which is dense in $[0,1]$. From
Lemma~\ref{lemma_coalescing_of_particles} it follows that
$$
z(u,t)<z(v,t)\quad\mbox{a.s.}
$$
for all $u,v\in U_{count}$, $u<v$.

Denote
$$
\Omega'=\bigcap_{u<v,\ u,v\in U_{count}}\{z(u,t)<z(v,t)\}.
$$
Since $U_{count}$ is countable, $\p(\Omega')=1$. Next, define
$$
\widetilde{z}(u,t,\omega)=\inf_{u\leq v,\ v\in U_{count}}z(v,t,\omega),\quad u\in[0,1],\
\omega\in\Omega'.
$$
Then for all $\omega\in\Omega'$, $\widetilde{z}(\cdot,t,\omega)\in D^{\uparrow\uparrow}$. Let us show
that $\rho\{u:\ \widetilde{z}(u,t)\neq z(u,t)\}=0$ a.s.

Denote
$$
\widetilde{\Omega}=\left(\bigcap_{u\in U_{count}}A_u\right)\cap\Omega'\cap\{z^{\eps'}(t)\to z(t)\
\mbox{in}\ L_2(\rho)\}.
$$
Then
\begin{equation}\label{f_conv_to_z}
z^{\eps}(u,t,\omega)\to z(u,t,\omega)=\widetilde{z}(u,t,\omega)
\end{equation}
for all $u\in U_{count}$ and $\omega\in\widetilde{\Omega}$. Fix $\omega\in\widetilde{\Omega}$. Since
$\widetilde{z}(\cdot,t,\omega)$ is nondecreasing, it has a countable set
$D_{\widetilde{z}(\cdot,t,\omega)}$ of discontinuous points. The countability implies that
$\rho(D_{\widetilde{z}(\cdot,t,\omega)})=0$. Take $u\in D_{\widetilde{z}(\cdot,t,\omega)}^c$, then
from monotonicity of $\widetilde{z}(\cdot,t,\omega)$ and $z^{\eps'}(\cdot,t,\omega)$, density of
$U_{count}$ and~\eqref{f_conv_to_z} we can obtain
$$
z^{\eps}(u,t,\omega)\to \widetilde{z}(u,t,\omega).
$$
Thus,
$$
z^{\eps}(\cdot,t,\omega)\to \widetilde{z}(\cdot,t,\omega)\quad \rho-\mbox{a.e.}
$$
On the other hand,
$$
z^{\eps}(\cdot,t,\omega)\to z(\cdot,t,\omega)\quad \mbox{in}\ L_2(\rho).
$$
Consequently, $z(\cdot,t,\omega)=\widetilde{z}(\cdot,t,\omega)$ $\rho$-a.e. for all
$\omega\in\widetilde{\Omega}$. So, it means that $z(t)\in L^{\uparrow\uparrow}_2(\rho)$ a.s., for
all $t\in[0,T]$.

Now we can prove that $z=\varphi$. Take $l\in C([0,1]\times[0,T],\R)$. By the dominated convergence
theorem, $\int_0^T\int_0^1l(u,t)(z^{\eps}(u,t)-u)dtdu$ converges to
$\int_0^T\int_0^1l(u,t)(z(u,t)-u)dtdu$ a.s. Integrating by parts, we get
\begin{align*}
 \int_0^T\int_0^1l(u,t)(z^{\eps}(u,t)-u)dtdu&=
\int_0^T\int_0^1L(u,t)(\pr_{z^{\eps}(t)}\dot{\varphi}(t))(u)dtdu\\
&+\int_0^T\int_0^1L(u,t)d\eta^{\eps}(u,t)du,
\end{align*}
where $L(u,t)=\int_t^Tl(u,s)ds$. The first term in the right hand side of the previous relation converges to
$\int_0^T\int_0^1L(u,t)\dot{\varphi}(u,t)dtdu$, by Lemma~\ref{lemma_semi_cont_of_F}.
The second term is the stochastic integral so we can estimate the expectation of its second moment
\begin{align*}
 \E\left(\int_0^T\int_0^1L(u,t)d\eta^{\eps}(u,t)du\right)^2&
\leq\eps\E\int_0^T\int_0^1(\pr_{z^{\eps}(t)}L(t))^2(u)dtdu\\
&\leq\eps\E\int_0^T\int_0^1L^2(u,t)dtdu\to 0,\quad \eps\to 0.
\end{align*}
Consequently, we obtain
$$
\int_0^T\int_0^1l(u,t)(z(u,t)-u)dtdu=\int_0^T\int_0^1L(u,t)\dot{\varphi}(u,t)dtdu,
$$
which easily implies $z=\varphi$. The proposition is proved.
\end{proof}

\nocite{MR2459454,MR2994690,Konarovskyi:2014:TSP,Shamov:2011}

\end{document}